\documentclass[11pt]{article}

\usepackage[utf8]{inputenc} % allow utf-8 input
\usepackage[T1]{fontenc}    % use 8-bit T1 fonts
\usepackage[hypertexnames=false]{hyperref}       % hyperlinks
\usepackage{url}            % simple URL typesetting
\usepackage{booktabs}       % professional-quality tables
\usepackage{amsfonts}       % blackboard math symbols
\usepackage{nicefrac}       % compact symbols for 1/2, etc.
\usepackage{microtype}      % microtypography
\usepackage{mathtools}
\mathtoolsset{showonlyrefs}
\usepackage{amsthm}

\title{Structural break analysis in high-dimensional
	covariance structure}

\author{
  Valeriy Avanesov \\
  WIAS\\ Mohrenstr. 39\\
  \texttt{avanesov@wias-berlin.de} \\
}

\newtheorem{assumption}{Assumption}
\newtheorem{remark}{Remark}
\newtheorem{lemma}{Lemma}
\newtheorem{theorem}{Theorem}
\newtheorem{definition}{Definition}

\begin{document}
% \nipsfinalcopy is no longer used

\maketitle

\begin{abstract}
	We consider detection and localization of an abrupt break in the covariance structure of high-dimensional random data. The paper proposes a novel testing procedure for this problem. Due to its nature, the approach requires a properly chosen critical level. In this regard we propose a purely data-driven calibration scheme. The approach can be straightforwardly employed in online setting and is essentially multiscale allowing for a trade-off between sensitivity and change-point localization (in online setting, the delay of detection). The description of the algorithm is followed by a formal theoretical study justifying the proposed calibration scheme under mild assumption and providing guaranties for break detection. All the theoretical results are obtained in a high-dimensional setting (dimensionality $p \gg n$). The results are supported by a simulation study inspired by real-world financial data.
\end{abstract}

\newcommand{\alphacorrected}{\alpha^*}
\newcommand{\Bn}{B_n}
\newcommand{\zetan}{z_n^\eta}
\newcommand{\gammam}{\gamma_-}
\newcommand{\gammap}{\gamma_+}
\newcommand{\gammab}{\gamma^\flat}
\newcommand{\alphaeta}{\alpha_\eta}
\newcommand{\Nn}{\mathcal{N}}
\newcommand{\X}{\mathcal{X}}
\newcommand{\xalphan}{x^\flat_n(\alpha)}
\newcommand{\xalphannn}[2]{x^\flat_{#1}(#2)}
\newcommand{\xalphann}[1]{x^\flat_{#1}(\alpha)}
\newcommand{\R}{\mathbb{R}}
\newcommand{\N}[2]{\mathcal{N}(#1, #2)}
\newcommand{\Nat}{\mathbb{N}}
\newcommand{\E}[1]{\mathbb{E}\left[#1\right]}
\newcommand{\Var}[1]{\mathrm{Var}\left[#1\right]}
\newcommand{\Varboot}[1]{\mathrm{Var}_\flat\left[#1\right]}
\newcommand{\trueSigma}{\Sigma^*}
\newcommand{\hatSigma}{\hat\Sigma}
\newcommand{\trueTheta}{\Theta^*}
\newcommand{\hatTheta}{\hat\Theta}
\newcommand{\Pb}{P^\flat}
\newcommand{\znb}{z^{N^\flat}}
\newcommand{\alphap}{\alpha^+}
\newcommand{\alpham}{\alpha^-}
\newcommand{\Nb}{\mathcal{N}^\flat}
\newcommand{\Zp}{\mathcal{Z}_+}
\newcommand{\Zm}{\mathcal{Z}_-}
\newcommand{\Zb}{{Z}^\flat}
\newcommand{\bZp}{\partial\mathcal{Z}_+}
\newcommand{\bZm}{\partial\mathcal{Z}_-}
\newcommand{\Nr}{\mathcal{N}}
\newcommand{\deltaZ}{\Delta_Z(x)}
\newcommand{\probXBound}{p_s^X(x)}
\newcommand{\Is}{\mathcal{I}_n^{\mathfrak{S}}(t)}
\newcommand{\Ir}{\mathcal{I}^r_n(t)}
\newcommand{\Il}{\mathcal{I}^l_n(t)}
\newcommand{\Istable}{\mathcal{I}_s}
\newcommand{\infnorm}[1]{\left|\left|#1\right|\right|_\infty}
\newcommand{\normtwo}[1]{\left|\left|#1\right|\right|_2}
\newcommand{\inv}[1]{#1^{-1}}
\newcommand{\Ziuv}{Z_{i,uv}}
\newcommand{\hatZiuv}{\hat Z_{i,uv}}
\newcommand{\Zbiuv}{Z_{i,uv}^\flat}
\newcommand{\Zbi}{Z_{i}^\flat}
\newcommand{\Prob}[1]{\mathbb{P} \left\{#1\right\}}
\newcommand{\Probboot}[1]{\mathbb{P^\flat} \left\{#1\right\}}
\newcommand{\abs}[1]{\left|#1\right|}
\newcommand{\ex}[1]{e^{#1}}
\newcommand{\onenorm}[1]{\left|\left|#1\right|\right|_1}
\newcommand{\OneNorm}[1]{\left|\left|\left|#1\right|\right|\right|_1}
\newcommand{\Zbound}[2]{\mathcal{Z}_{#1}(#2)}
\newcommand{\Wbound}[2]{\mathcal{W}_{#1}(#2)}
\newcommand{\Zhatbound}[2]{\hat{\mathcal{Z}}_{#1}(#2)}
\newcommand{\Zboundsqr}[2]{\mathcal{Z}^2_{#1}(#2)}
\newcommand{\Zhatboundsqr}[2]{\mathcal{\hat Z}^2_{#1}(#2)}
\newcommand{\probzbound}[2]{p^{\mathcal{Z}}_{_{#1}}(#2)}
\newcommand{\probwbound}[2]{p^{\mathcal{W}}_{{#1}}(#2)}
\newcommand{\probhatzbound}[2]{p_{\hat{\mathcal{Z}}_{#1}}(#2)}
\newcommand{\probsigmazbound}[3]{p^{\Sigma_Z }_{_{#1}}(#2, #3)}
\newcommand{\probsigmazboundone}{p^{\Sigma_{Z 1}}_{_{s}}(x, q)}
\newcommand{\probsigmazboundtwo}{p^{\Sigma_{Z 2}}_{_{s}}(x)}
\newcommand{\probsigmay}{p^{\Sigma_Y}_{s}(x,q)}
\newcommand{\probSigmaBound}[2]{p^{\Sigma}(#2)}
\newcommand{\probOmegaBound}{p^{\Omega}_s(x,t)}
\newcommand{\Zv}{\overline{Z}}
\newcommand{\Zhatv}{\overline{\hat Z}}
\newcommand{\Zhatvi}{\overline{\hat Z_i}}
\newcommand{\trueSigmaZ}{\Sigma^*_Z}
\newcommand{\hatSigmaZ}{\hat\Sigma_Z}
\newcommand{\trueSigmahatZ}{\Sigma^*_{\hat Z}}
\newcommand{\hatSigmahatZ}{\hat\Sigma_{\hat Z}}
\newcommand{\Wi}{W^{(i)}}
\newcommand{\Sz}{S_Z}
\newcommand{\Sbz}{S_Z^\flat}
\newcommand{\Snbz}{S_Z^{n\flat}}
\newcommand{\Szn}{S_Z^n}
\newcommand{\Sbzn}{S_Z^{n\flat}}
\newcommand{\Znormalized}{Z^{{S}}}
\newcommand{\Zhatnormalized}{\hat{Z}^{{S}}}
\newcommand{\Zbnormalized}{Z^{{S\flat}}}
\newcommand{\maxN}[1]{\max\mathcal{N}_{#1}}
\newcommand{\trueSigmaY}{\Sigma^*_Y}
\newcommand{\hatSigmaY}{\hat\Sigma_Y}
\newcommand{\mymin}[2]{\min_{#1}\left[#2\right]}
\newcommand{\kappaGamma}{\kappa_{\Gamma^*}}
\newcommand{\kappaSigma}{\kappa_{\Sigma^*}}
\newcommand{\kappaTheta}{\kappa_{\Theta^*}}
\newcommand{\Ra}{R_A}
\newcommand{\Rb}{R_B}
\newcommand{\Xb}{X^\flat}
\newcommand{\bootTheta}{\hat\Theta}
\newcommand{\Bb}{B^\flat}
\newcommand{\Bbn}{B^\flat_n}
\newcommand{\patternAb}{\mathcal{A}^\flat}
\newcommand{\patternA}{\mathcal{A}}
\newcommand{\Yb}{Y^\flat}
\newcommand{\empE}[1]{\mathbb{E}_{\mathcal{I}_S}\left[#1\right]}
\newcommand{\empVar}[1]{\mathrm{Var}_{\mathcal{I}_S}\left[#1\right]}
\newcommand{\ione}{i_1}
\newcommand{\itwo}{i_2}
\newcommand{\ithree}{i_3}
\newcommand{\ifour}{i_4}
\newcommand{\Zi}{Z_{i}}
\newcommand{\hatZik}{\hat{Z}_{i,kl}}
\newcommand{\Bernstain}[4]{\frac{\left(#1\right)#2}{3#3}\left(1+ \sqrt{1+\frac{9#3#4}{#2\left(#1\right)^2}} \right)}
\newcommand{\DeltaMomentExp}{\Delta_{M_{exp}}}
\newcommand{\DeltaMomentThree}{\Delta_{M_{3}}}
\newcommand{\DeltaMomentFour}{\Delta_{M_{4}}}
\newcommand{\SigmaZDelta}{\Delta_{\Sigma_Z}}
\newcommand{\xalpha}{x^\flat(\alpha)}
\newcommand{\xm}{x_\alpha^-}
\newcommand{\xp}{x_\alpha^+}
\newcommand{\xmm}{x_\alpha^{--}}
\newcommand{\xpp}{x_\alpha^{++}}
\newcommand{\Rboot}{R_{A^b}}
\newcommand{\RBboot}{R_{B^\flat}}
\newcommand{\Sigmap}{\Sigma_Y^+}
\newcommand{\Sigmam}{\Sigma_Y^-}
\newcommand{\Rsigma}{R_{\Sigma}^{\pm}}
\newcommand{\given}{\vert}
\newcommand{\Tauz}{\mathcal{T}_Z}
\newcommand{\ptauz}{p_{\mathcal{T}_Z}}
\newcommand{\omegaDelta}{\Delta^\Omega_s(x,t)}
\newcommand{\probMoment}[2]{p^M_s(#1)}
\newcommand{\CB}{C_{B}}
\newcommand{\CBb}{C_{B^\flat}}
\newcommand{\hatCBb}{\hat{C}_{B^\flat}}
\newcommand{\deltaY}{\Delta_Y}
\newcommand{\Zvi}{\overline{Z_i}}
\newcommand{\hatZvi}{\overline{\hat Z_i}}
\newcommand{\n}{\mathfrak{N}}
\newcommand{\Tn}{\mathbb{T}_n}
\newcommand{\Tnmax}{\mathbb{T}_{\nmax}}
\newcommand{\nmin}{n_-}
\newcommand{\nmax}{n_+}
\newcommand{\numn}{\abs{\mathfrak{N}}}
\newcommand{\glestimation}{\hatTheta^{GL}}
\newcommand{\mbestimation}{\hatTheta^{MB}}
\newcommand{\tr}{tr}
\newcommand{\maxLambda}[1]{\Lambda\left(#1\right)}
\newcommand{\minLambda}[1]{\lambda\left(#1\right)}
\newcommand{\rank}{rank}
\newcommand{\Tau}{\mathcal{T}}
\newcommand{\xii}{\xi_i}
\newcommand{\hatxii}{\hat\xi_i}
\newcommand{\x}{\mathrm{x}}
\newcommand{\q}{\mathrm{q}}
\newcommand{\twoNorm}[1]{\lvert\lvert#1\rvert\rvert_{2}}
\newcommand{\probxbound}[2]{p_{X_#1}(#2)}
\newcommand{\Asc}{A^{sc}(s)}
\newcommand{\Hnull}{\mathbb{H}_0}
\newcommand{\Halt}{\mathbb{H}_1}
\newcommand{\Rt}{R_{\hat T}}
\newcommand{\s}{\mathfrak{S}}
\newcommand{\Zcb}{\mathcal{Z}^\flat}
\newcommand{\zb}{z^\flat}
\newcommand{\zm}{z_-}
\newcommand{\zp}{z_+}
\newcommand{\zmn}{z_-^0}
\newcommand{\Zbn}{\mathcal{Z}^\flat(a)}
\newcommand{\zpn}{z_+^0}
\newcommand{\trueOmega}{\Omega^*}
\newcommand{\hatOmega}{\hat\Omega}
\newcommand{\deltaOmega}[3]{\Delta^\Omega_{#1}(#2, #3)}
\newcommand{\deltaOmegaProb}[3]{p^\Omega_{#1}(#2, #3)}
\newcommand{\SigmaOne}{\Sigma_1}
\newcommand{\SigmaTwo}{\Sigma_2}

%!TEX root = main.tex
\section{Introduction}

The analysis of high dimensional time series is crucial for many fields including neuroimaging and financial engineering.
There one often has  to deal with processes involving abrupt structural breaks which necessitates a corresponding adaptation of the model and/or the strategy.
Structural break analysis comprises determining if an abrupt change is present  in the given sample and if so,
estimating the change-point, namely the moment in time when it takes place. In literature
both problems may be referred to as {\it change-point} or {\it break detection}. In this study we will
be using terms {\it break detection} and {\it change-point localization} respectively in order to distinguish between them.
The majority of approaches consider only a univariate process \cite{limitTheoremsCPAnalysis, aue2013}.
However, in recent years the interest for multi-dimensional approaches has increased. Most of them cover the case of fixed dimension \cite{Matteson2015, Lavielle2006, aue2009, xie2013, zou2014}. Some approaches \cite{cho2016, jirak2015, Cho2015} feature {\it high-dimensional} theoretical guaranties but only the case of dimensionality polynomially growing in sample size is covered. The case of exponential growth has not been considered so far.

In order to detect a break, a test statistic is usually computed for each point $t$ (e.g. \cite{Matteson2015}). The break is detected if the maximum of these values exceeds a certain \textit{threshold}. A proper choice of the latter may be a tricky issue.  The classical approach to the problem is based on the asymptotic behavior of the statistic \cite{limitTheoremsCPAnalysis, aue2013, aue2009, jirak2015, biau2016, zou2014}. As an alternative, permutation \cite{jirak2015, Matteson2015} or parametric bootstrap may be used \cite{jirak2015}. Clearly, it seems attractive to choose the threshold in a solely data-driven way employing bootstrap as it is suggested in the recent paper by \cite{cho2016}, but a rigorous bootstrap validation is still an open question, which we address in the study.

In the current study we are interested in a particular kind of a break -- an abrupt transformation in the covariance matrix -- which is motivated by applications to finance and neuroimaging. In finance the dynamics of the covariance structure of a high-dimensional process modeling return rates is crucial for a proper asset allocation in a portfolio \cite{Serban2007,Bauwens2006,Engle1990,Mikosch2009}. Analogously, break analysis in covariance structure of data in functional Magnetic Resonance Imaging is  particularly important for the research on neural diseases as well as in context of brain development with emphasis on characterization of the re-configuration of the brain during learning  \cite{Bassett_Wymbs_Porter_Mucha_Carlson_Grafton_2010, sporns2011, journals/brain/Friston11}.

One approach allowing for the change-point  localization is developed in \cite{Lavielle2006}, the corresponding significance testing problem is considered in \cite{aue2009}. However, neither of these papers addresses the high-dimensional case.

A widely used break detection approach (named CUSUM) \cite{Cho2015, aue2009, jirak2015} suggests to compute a statistic at a point $t$ as a distance of estimators of some parameter of the underlying distributions obtained using all the data before and after that point. This technique requires the whole sample to be known in advance, which prevents it from being used in \textit{online} setting. In order to overcome this drawback we propose the following augmentation: choose a window size $n \in \Nat$ and compute parameter estimators using only $n$ points before and $n$ points after the \textit{central point} $t$ (see Section \ref{apprsec} for formal definition). Window size $n$ is an important parameter and its choice is case-specific (see Section \ref{secsens} for theoretical treatment of this issue). Using small window results in high variability and low sensitivity, while large window implies higher uncertainty in change-point localization yielding the issue of a proper choice of window size. The \textit{multiscale} nature of the proposed method enables us to 	incorporate the advantages of narrower and wider windows by considering multiple window sizes at once in order for wider windows to provide higher sensitivity while narrower ones improve change-point localization.

The contribution of our study is the development of a novel  structural break analysis approach which is
\begin{itemize}
	\item high-dimensional, allowing for up to exponential growth of the dimensionality with the window size
	\item suitable for online setting
	\item multiscale, attaining trade-off between break detection sensitivity and change-point localization accuracy
	\item using a fully data-driven calibration scheme rigorously justified under mild assumptions
	\item featuring formal sensitivity guaranties in high-dimensional setting
\end{itemize}

We consider the following setup. Let $X_1,...,X_N\in\R^p$ denote a sample of independent zero-mean vectors. In online setting the sample size is not fixed in advance. The goal is to test the hypothesis

\begin{equation}
\Hnull \coloneqq \{\forall i : \Var{X_i} = \Var{X_{i+1}}   \}
\end{equation}
versus the alternative suggesting the existence of a break:

\begin{equation}
\Halt  \coloneqq  \left\{ \exists \tau : \Var{X_{\tau}} \neq  \Var{X_{\tau+1}}  \right\}
\end{equation}
and localize the change-point $\tau$ as precisely as possible or (in online setting) to detect a break as soon as possible.

To this end we define a family of test statistics in Section \ref{apprsec} which is followed by Section \ref{secBOoot} describing a data-driven (bootstrap) calibration scheme. Section \ref{secbv} presents and discusses a theoretical result justifying the bootstrap scheme while Section \ref{secsens} presents a sensitivity result providing a lower bound for a window size $n$ necessary to detect a break of a given extent and hence bounding the uncertainty of the change-point localization (or the delay of detection in online setting). Finally, Section \ref{Simulation} presents a simulation study inspired by real-world financial data supporting the theoretical findings and demonstrating superiority of our approach to a recent one.

%!TEX root = main.tex
\section{Proposed approach}
The first part of this Section formally defines the test statistics while the second part concentrates on the calibration scheme. Informally, the test statistics may be defined as follows. Provided that the break may happen only at point $t$, one could estimate the covariance matrix using $n$ data-points to the left of $t$, estimate it again using $n$ data-points to the right of it and use the norm of their difference as a test statistic $B_n(t)$. Yet, in practice one does not usually possess such knowledge, therefore we propose to maximize these statistics over all possible locations $t$ yielding $B_n$. Finally, in order to attain a trade-off between break detection sensitivity and change-point localization accuracy we propose a multiscale approach considering multiple window sizes $n \in \n$ and multiple respective test statistics $\{B_n\}_{n \in \n}$ at once.
\subsection{Definition of the test statistics}\label{apprsec}

Now we present a formal definition of the test statistic.
In order to detect a break we consider a set of window sizes $\n \subset \mathbb{N}$. Denote the size of the widest window as $\nmax$ and of the narrowest as $\nmin$. Given a sample of length $N$ for each window size $n \in \n$ define a set of central points $\Tn \coloneqq \{n+1, n+2,... N-n+1\}$.  Next, for all $n \in \n$ define a set of indices which belong to the window on the left side from the central point $t \in \Tn$ as $\Il \coloneqq \{t-n,t-n+1, ... , t-1\}$ and correspondingly for the window on the right side define $\Ir \coloneqq \{t, t+1,... , t+n-1\}$.
Denote the sum of numbers of central points for all window sizes $n \in \n$ as

\begin{equation}\label{Tdef}
T \coloneqq \sum_{n \in \n} \abs{\Tn}.
\end{equation}
For each window size $n \in \n$ and each central point $t \in \Tn$ define a pair of estimators of covariance matrix as

\begin{equation}
\hatSigma^l_n(t) \coloneqq \frac{1}{n} \sum_{i \in \Il} X_i X_i^T \text{   and     }\hatSigma^r_n(t) \coloneqq \frac{1}{n} \sum_{i \in \Ir} X_i X_i^T.
\end{equation}

Let some subset of indices $\Istable \subseteq 1..N$ of size $s$ (possibly, $s = N$) be chosen. Define a scaling diagonal matrix $$S = diag(\sigma_{1,1},\sigma_{1,2}... \sigma_{p,p-1}, \sigma_{p,p})$$  where the elements $\sigma_{j,k}$ are standard deviations of corresponding elements of $X_i X_i^T$ averaged over $\Istable$:
\begin{equation}
\sigma_{j,k}^2 \coloneqq \frac{1}{s} \sum_{i \in \Istable}\Var{(X_i X_i^T)_{jk}}.
\end{equation}
In practice the matrix $S$ is usually unknown, hence we propose to plug-in empirical estimators $\hat{\sigma}_{j,k}$.

For each window size $n \in \n$ and central point $t \in \Tn$ we define a test statistic $B_n(t)$

\begin{equation}  \label{Adef}
\begin{split}
B_n(t) &\coloneqq \infnorm{\sqrt{\frac{n}{2}}\inv{S}\overline{(\hatSigma^l_n(t) - \hatSigma^r_n (t))}}.
\end{split}
\end{equation}
Here and below we write $\overline{A}$ for a vector composed of stacked columns of matrix $A$ and use $\infnorm{\cdot}$ to denote the sup norm.
Finally, the family of test statistics $\{B_n\}_{n \in \n}$ is obtained via maximization over the central points:
\begin{equation}
\Bn \coloneqq \max_{t \in \Tn} \Bn(t).
\end{equation}

\begin{remark}
	Generally,  one can choose the diagonal matrix $S$ arbitrarily as long as its elements are bounded. The choice does not affect Theorems \ref{theTheorem} and \ref{senst}. However, we prefer to bring all the elements of the covariance matrices to the same scale first, so the test focuses on a relative change. Ideally, we would like to use the $\sigma_{j,k}^2$, yet due to its unavailability we resort to their empirical estimates, whose consistency can be easily demonstrated based on Assumption~\ref{subGaussianVector}.
\end{remark}

\subsection{Decision rule and bootstrap calibration scheme} \label{secBOoot}

Our approach rejects $\Hnull$ in favor of $\Halt$ if at least one of statistics $B_n$ exceeds a corresponding threshold $\xalphan$ or formally if $\exists n \in \n : \Bn > \xalphan$.

In order to choose thresholds $\xalphan$ the following bootstrap scheme is proposed. Define vectors $\hat Z_i$ for $i \in \Istable$ as
\begin{equation}
\hat Z_i \coloneqq \overline{X_i X_i^T - \frac{1}{s} \sum_{i \in \Istable} X_i X_i^T}.
\end{equation}
Elements $\Zbi$ for $i \in 1..N$ of bootstrap sample are proposed to be drawn with replacement from the set ~$\bigcup_{i \in \Istable} \{\hat \Zi, -\hat \Zi\}$. Denote the measure which $\Zbi$ are distributed with respect to as $\mathbb{P}^\flat$. By construction $\mathbb{P}^\flat$ is not absolute continuous w.r.t to Lebesgue measure, which is not a problem per se, yet ``high jumps'' naturally complicate quantile estimation. Bringing in both $\hat \Zi$ and $-\hat \Zi$ reduces the ``jumps''.

Now we are ready to define a bootstrap counterpart $\Bb_n(t)$ of $B_n(t)$ for all $n \in \n$ and $t \in \Tn$ as
\begin{equation} \label{bootAnDef}
\Bb_n(t) \coloneqq \infnorm{\sqrt{\frac{n}{2}}\inv{S} \left( \frac{1}{n}\sum_{i \in \Il} \Zbi  - \frac{1}{n}\sum_{i \in \Ir} \Zbi \right)}.
\end{equation}
The counterparts $\Bbn$ of $\Bn$  for all $n \in \n$ are naturally defined as
\begin{equation}
\Bbn \coloneqq \max_{t \in \Tn} \Bbn(t).
\end{equation}
Now for each given $\x \in (0,1)$ we can define quantile functions $z^\flat_n(\x)$ such that
\begin{equation}\label{tailfunctiondef}
z^\flat_n(\x) \coloneqq \inf \left\{z :  \Probboot{\Bbn > z} \le\x\right\} .
\end{equation}
Next for a given significance level $\alpha$ we apply multiplicity correction choosing $\alphacorrected$ as
\begin{equation}\label{alphastardef}
\alphacorrected \coloneqq \sup \left\{\x : \Probboot{\exists n \in \n : \Bbn > z^\flat_n(\x)}  \le \alpha\right\}
\end{equation}
and finally choose thresholds as $\xalphan \coloneqq z^\flat_n(\alphacorrected)$.

\begin{remark}\label{remark1}
	In most of the cases one may simply choose $\Istable = 1...N$ but at the same time it seems appealing to use some sub-sample which a priori does not include a break, if such information is available. On the other hand, the bootstrap justification result (Theorem \ref{theTheorem}) and sensitivity result (Theorem \ref{senst}) benefit from larger set $\Istable$. The experimental comparison of these options is given in Section \ref{Simulation}.
\end{remark}

\subsection{Change-point localization}

In order to localize a change-point we have to assume that $\Istable \subseteq 1..\tau$. Consider the narrowest window detecting a change-point as $\hat n$:
\begin{equation}
\hat n \coloneqq \min \left\{n \in \n : \Bn > \xalphan \right\}
\end{equation}
and the central point where this window detects a break for the first time as 
\begin{equation}
\hat \tau \coloneqq \min \left\{ t \in \mathbb{T}_{\hat n} : B_{\hat n}(t) > x^\flat_{\hat n}(\alpha) \right\}.
\end{equation}
By construction of the family of the test statistics we conclude (up to the confidence level $\alpha$) that the change-point $\tau$ is localized in the interval

\begin{equation}
\left[\hat \tau - \hat n ; \hat \tau + \hat n - 1\right].
\end{equation}
Clearly, if a non-multiscale version of the approach is employed, i.e. $\abs{\n} = \{n\}$, $n = \hat n$
and precision of localization (delay of the detection in online setting) equals $n$.

%!TEX root = main.tex
\section{Bootstrap validity result}\label{secbv}
This section states and discusses the theoretical result demonstrating validity of the proposed bootstrap scheme i.e.

\begin{equation}\label{sort}
\Prob{\forall n \in \n : \Bn \le \xalphan} \approx 1-\alpha.
\end{equation}

Our theoretical results require the tails of the underlying distributions to be light. Specifically, we impose Sub-Gaussianity vector condition.

\begin{assumption}
	\label{subGaussianVector}
	\begin{equation}
	\exists L >0 : \forall i\in 1..N  \sup_{\substack{a \in \R^p \\ \normtwo{a} \le 1 }}\E{\exp{\left(\left(\frac{a^T	X_i}{L}\right)^2\right)}} \le 2.
	\end{equation}
\end{assumption}

\begin{theorem} \label{theTheorem}
	Let Assumption \ref{subGaussianVector} hold and let $X_1, X_2,...,X_N$ be i.i.d. Allow the parameters $p, \abs{\n}, s, \nmin, \nmax$ grow with $N$. Further let $N > 2\nmax \ge 2\nmin $ and $N > s$ and let the minimal window size $\nmin$ and the size $s$ of the set $\Istable$ grow fast enough

	\begin{equation}\label{slowTerm}
	\frac{\abs{\n}L^4 \log^{8}(pN)}{\min\{\nmin, s\}} = o(1).
	\end{equation}
	Then
	\begin{equation}
	\abs{\Prob{\forall n \in \n : \Bn \le \xalphan} - (1-\alpha)}  = o_P(1),
	\end{equation}
\end{theorem}

The formal proof of the theorem can be found in Supplementary Materials Section \ref{secProof} along with the finite-sample-size version of the result.

\begin{remark}
	The form of assumption \eqref{slowTerm} is mostly driven by the remainder term in the Gaussian Approximation Result (Lemma \ref{generalGAR}). Optimality of the term is discussed in \cite{Chernozhukov2014}. The authors conjecture that it is minimax optimal up to the power the logarithm is raised to ($\log^7(pn)$). As the case of isotropic vectors $X_i$ demonstrates, the power cannot be less than $3$.
	Therefore, assumption \eqref{slowTerm} surely may not be weaker (in terms of dependence on $p, N, \nmin, \text{ and } s$) than
	\begin{equation}
		\log^3(pN) \ll \min\{\nmin, s\},
	\end{equation}
	as long as we use a Gaussian Approximation Result, which is the mainstream approach to prove a bootstrap validity result. Recently a successful attempt \cite{Deng} was made to bypass Gaussian approximation, which brought the power down from $7$ to $5$. Hence, one can hypothesize that Theorem \ref{theTheorem} can not be re-established under a condition weaker than
	\begin{equation}
		\log^5(pN) \ll \min\{\nmin, s\}.
	\end{equation}
	Thus we conjecture the assumption \eqref{slowTerm} is nearly optimal.

	At the same time the good performance of the approach exhibited in the simulation study (see Section \ref{Simulation}) suggests there is a possibility to obtain a significantly better theoretical results for a narrower distribution family. We leave suggestion of such a family and further investigation for the future research.
\end{remark}

\paragraph{Proof discussion}
The proof of the bootstrap validity result mostly relies on the high-dimensional central limit theorems obtained by \cite{Chernozhukov2014}. That paper also presents bootstrap justification results, yet does not include a comprehensive bootstrap validity statement. The theoretical treatment is complicated by the randomness of $\xalphan$. Indeed, consider Lemma \ref{tvlemma}. One cannot trivially obtain result of sort \eqref{sort} substituting $\{\xalphan\}_{n \in \n}$ in \eqref{eqtv} due to the randomness of $\xalphan$ and dependence between $\xalphan$ and $B_n$. We overcome this by means of so-called ``sandwiching'' proof technique (see Lemma \ref{sandwitch}), initially used by \cite{SpokWillrich} and extended by \cite{buzun}. The authors of \cite{SpokWillrich} had to assume normality and low dimensionality of the data, while in \cite{buzun} only continuous probability measures $\mathbb{P}$ and $\mathbb{P^\flat}$ were considered. Our result is free of such limitations.

\paragraph{Online setting}\label{online}
As one can easily see, the theoretical result is stated in off-line setting, when the whole sample of size $N$ is acquired in advance. In online setting we suggest to control the probability $\alpha$  to raise a false alarm for at least one central point $t$ among $N$ data points (which differs from classical techniques controlling the mean distance between false alarms \cite{aries2007optimal}). Having $\alpha$ and $N$ chosen, one should acquire $s$ data-points (set $\{X_i\}_{i \in \Istable}$) and employ the proposed bootstrap scheme with the bootstrap samples of length $N$ in order to obtain the critical values. Next, the approach can be naturally applied in online setting and Theorem \ref{theTheorem} guarantees the capability of the proposed bootstrap scheme to control the aforementioned probability to raise a false alarm.

%!TEX root = main.tex
\section{Sensitivity result}\label{secsens}

Consider the following setting. Let there be index $\tau$, such that $\{X_i\}_{i \le \tau}$ are i.i.d. and $\{X_i\}_{i > \tau}$ are i.i.d. as well. Denote covariance matrices $\SigmaOne \coloneqq \E{X_1X_1^T}$ and $\SigmaTwo \coloneqq \E{X_{\tau+1}X_{\tau+1}^T}$.   Define the break extent $\Delta$ as
\begin{equation}\label{breakextentdef}
\Delta \coloneqq \infnorm{{\SigmaOne - \SigmaTwo}}.
\end{equation}
 The question is, how large the window size $\nmax$ should be in order to reliably reject $\Hnull$.

\begin{theorem} \label{senst}
	Let Assumption \ref{subGaussianVector} hold and let $X_1, X_2,...,X_N$ be i.i.d. Allow the parameters $p, \abs{\n}, s, \nmin, \nmax$ grow with $N$ and let the break extent $\Delta$ decay with $N$. Further let $N > 2\nmax \ge 2\nmin $, $N > s$, $\Istable \subset 1..\tau$ and let the minimal window size $\nmin$, the size $s$ of the set $\Istable$ and the maximal window size $\nmax$ grow fast enough

	\begin{equation}
	\frac{\abs{\n}L^4 \log^{8}(pN)}{\min\{\nmin, s\}} = o(1),
	\end{equation}

	\begin{equation}\label{nmaxbound}
	\frac{\log (pN)}{\nmax \Delta^2} = o(1).
	\end{equation}
	Then $\Hnull$ will be rejected with probability approaching 1.
\end{theorem}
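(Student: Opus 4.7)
The plan is to exhibit a specific window size and central point at which the test statistic exceeds the threshold with high probability, namely $n = \nmax$ and $t^{*} = \tau+1$ (which lies in $\Tnmax$ under the implicit assumption $\nmax \le \min(\tau, N-\tau)$; otherwise either side of the break is too short for any procedure of this shape to work). For this choice $\mathcal{I}^l_{\nmax}(t^{*}) \subset 1..\tau$ and $\mathcal{I}^r_{\nmax}(t^{*}) \subset \tau+1..N$, so $\E{\hatSigma^l_{\nmax}(t^{*})} = \SigmaOne$ and $\E{\hatSigma^r_{\nmax}(t^{*})} = \SigmaTwo$. The rejection probability is therefore bounded below by $\Prob{B_{\nmax}(t^{*}) > \xalphannn{\nmax}{\alpha}}$, which I address via a signal/noise split together with an upper bound on the threshold.

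For the threshold, $\Istable \subset 1..\tau$ makes the whole bootstrap construction a function of pre-break data only, so the distribution of $\xalphannn{\nmax}{\alpha}$ coincides with what it would be under $\Hnull$ with the pre-break law. Under Assumption~\ref{subGaussianVector}, each coordinate of $\inv{S}\hat Z_i$ is sub-exponential with bootstrap variance close to $1$; applying a Bernstein bound to the Rademacher-weighted sum defining $\Bb_n(t)$ and taking a union bound over $p^{2}$ coordinates, $n \in \n$ and $t \in \Tn$ gives, together with the trivial estimate $\alphacorrected \ge \alpha/\abs{\n}$, the high-probability bound $\max_{n\in\n}\xalphannn{n}{\alpha} \le C\sqrt{\log(pN)}$. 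This is precisely the kind of high-dimensional quantile control already used in the proof of Theorem~\ref{theTheorem}.

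For the lower bound on $B_{\nmax}(t^{*})$, pick $(j^{*},k^{*})$ with $|(\SigmaOne-\SigmaTwo)_{j^{*}k^{*}}| = \Delta$ and decompose $\hatSigma^l_{\nmax}(t^{*}) - \hatSigma^r_{\nmax}(t^{*}) = (\SigmaOne - \SigmaTwo) + E$ with $E$ a mean-zero error matrix. The single coordinate $(j^{*},k^{*})$ already yields
\begin{equation*}
B_{\nmax}(t^{*}) \ge \sqrt{\frac{\nmax}{2}}\,\frac{\Delta}{\sigma_{j^{*}k^{*}}} - \sqrt{\frac{\nmax}{2}}\,\infnorm{\inv{S}\overline{E}}.
\end{equation*}
Sub-Gaussianity gives a uniform upper bound $\sigma_{j,k}\le C_{L}$, so the signal is at least $c\sqrt{\nmax}\,\Delta$. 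The noise $\sqrt{\nmax/2}\,\infnorm{\inv{S}\overline{E}}$ is the $\ell_{\infty}$-norm of a normalized sum of $2\nmax$ i.i.d.\ centred sub-exponential variables in $\R^{p^{2}}$, so Bernstein plus a $p^{2}$-coordinate union bound give $O_{P}(\sqrt{\log(pN)})$. Combining with the threshold estimate, the event $\{B_{\nmax}(t^{*}) > \xalphannn{\nmax}{\alpha}\}$ holds as soon as $\sqrt{\nmax}\,\Delta$ dominates a constant multiple of $\sqrt{\log(pN)}$, which is exactly condition~\eqref{nmaxbound}.

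The principal obstacle is the upper bound on the random threshold $\xalphannn{\nmax}{\alpha}$: the multiplicity correction $\alphacorrected$ couples all windows simultaneously and the quantile functions $z^{\flat}_{n}$ all depend on the same bootstrap sample, so one cannot naively substitute a deterministic bound. The clean route is to reuse the high-dimensional Gaussian approximation and anti-concentration estimates already established for Theorem~\ref{theTheorem}: since $\Istable\subset 1..\tau$, those estimates apply verbatim to the present alternative, transferring the null-regime threshold bound to the setting under $\Halt$. The remaining signal/noise split is a standard application of a sub-exponential Bernstein inequality with a union bound over the $p^{2}$ entries of $E$.
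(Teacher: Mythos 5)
Your proposal is correct and follows essentially the same route as the paper's proof of Theorem \ref{senstFin}: fix the widest window at the change point, lower-bound $B_{\nmax}$ there by the signal $\sqrt{\nmax/2}\,\Delta$ (up to the scaling by $S$) minus a covariance-concentration noise term of order $\sqrt{\nmax}\,\delta_{\nmax}(\chi)\asymp\sqrt{\log p}$ (the paper's Lemma \ref{sigmaconcentration} applied to $\hatSigma^l$ and $\hatSigma^r$, your Bernstein-plus-union-bound step), and upper-bound the data-driven threshold by a deterministic quantity of order $\sqrt{\log(\abs{\n}Np^2/\alpha)}$ using exactly your observation that the correction satisfies $\alphacorrected \ge \alpha/\abs{\n}$ and that the bootstrap construction depends only on $\{X_i\}_{i\in\Istable}\subset 1..\tau$. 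The only divergence is the tool for the threshold bound: the paper passes through the bootstrap Gaussian approximation and covariance comparison (Lemmas \ref{SbGar}, \ref{trueHatSigma}) and then a Gaussian maximal inequality (Lemma \ref{maxBound}) to get $\xalphann{\nmax}\le\q$, whereas you propose a direct conditional Bernstein bound on $\Bb_{\nmax}$ (valid given the high-probability bound on $\max_{i}\infnorm{\hat Z_i}$ and the bootstrap variance control, both available in the paper's lemmas), and your fallback suggestion to reuse the Gaussian-approximation machinery from Theorem \ref{theTheorem} is precisely what the paper does.
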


The formal proof along with the finite-sample-size version is given in Supplementary Materials Section \ref{sensResProof}.

\paragraph{Discussion of sensitivity result}
The assumption $\Istable \subseteq 1..\tau$ is only technical. A similar result may be proven without relying on it by methodologically the same argument. Really, if the assumption is violated, the method is calibrated for a matrix $\eta\Sigma_1 + (1-\eta)\Sigma_2$, where $\eta\in[0,1]$ and depends on the portion of time-points before and after the break included in the $\Istable$. Clearly (see the proof for details), this changes the bound for critical level only by some multiplicative constant, which does not affect the asymptotic result in question.

Obviously, we still cannot explicitly compute the window size sufficient for reliable break detection, since it depends on the underlying distributions. However this result guarantees that the sensitivity of the test does not vanish in high-dimensional setting.

\paragraph{Online setting}
Theorem \ref{senst} is established in offline setting as well. In online setting it guarantees that the proposed approach can reliably detect a break of an extent not less than $\Delta$ with a delay at most $\nmax$ satisfying \eqref{nmaxbound}.

\paragraph{Change-point localization guaranties}
Theorem \ref{senst} implies by construction of statistic $\Bn$ that the change-point can be localized with precision up to $\nmax$. Hence the bound \eqref{nmaxbound} provides the bound for change-point localization accuracy.

%!TEX root = main.tex
\section{Simulation study} \label{Simulation}

\subsection{Real-world covariance matrices}
 We have downloaded stock market quotes for $p=87$ companies included in S\&P $100$ with $1$-minute intervals for approximately a week ($N=2211$) using the API provided by Google Finance\footnote{\url{https://www.google.com/finance}}.  A sample of interest was composed of $1$-minute log returns for each of the companies. Our approach with window size $\n =\{30\}$ has detected a break at confidence level $\alpha = 0.05$, while the approach proposed by \cite{Matteson2015} (referred to as ecp below) has detected nothing. The change-point was localized at the morning of Monday 19 December 2016 (the day when the Electoral College had voted).

Discarding the portion of the data around the estimated change-point we have acquired a pair of data samples which both approaches fail to detect a break in. Denote the realistic covariance matrices estimated on each of these samples as $\Sigma_1$ and $\Sigma_2$. These matrices are publicly available \footnote{\url{fill.me/data.zip}}.

Our implementation is available\footnote{\url{https://github.com/akopich/covcp}} under GPLv2.

\subsection{Design of the simulation study, results and discussion}

The goal of the current simulation study is to verify that the bootstrap procedure controls first type error rate and evaluate the power of the test and compare it to the power of ecp. Hence we need to generate two types of realistic datasets -- with and without a break for power and first type error rate estimation respectively. In order to generate a dataset without a break we independently draw $520$ vectors from normal distribution $\N{0}{\Sigma_1}$. As for the datasets including a break, they are generated by binding $400$ vectors independently drawn from  $\N{0}{\Sigma_1}$ and $120$ vectors independently drawn from $\N{0}{\Sigma_2}$. Clearly, the data generated in such a fashion satisfies Assumption \ref{subGaussianVector} as a Gaussian vector is also sub-Gaussian.

The results obtained in the simulation study are given in Table \ref{restabel}. One can easily see that the proposed test exhibits proper control  of the first type error rate. In fact, it is conservative due to $\le$  signs entering the definitions of the tail-functions \eqref{tailfunctiondef} and corrected significance level $\alpha^*$ \eqref{alphastardef}. The issue may be mitigated by drawing more bootstrap samples during the calibartion stage, yet we leave this out of the scope as our theoretical results effectively presume availability of an infinite number of bootstrap samples. ecp (being tested in the same setting) has demonstrated proper first type error rate as well, but the power did not exceed $0.1$. So, our approach outperforms ecp in all cases apart from $\n=\{7\}$ and $\Istable = 1..100$.

As expected, the power is higher for larger windows and it may be decreased by adding narrower windows into consideration which is the price to be paid for better change-point localization.

It should be noted that contrary to the intuition expressed in Remark \ref{remark1} using only a data sub-sample which a priori does not include a break does not necessarily improve the power of the test.

For the case of $\Istable = 1..100 \subset 1..\tau$ Table \ref{restabel} also provides mean precision of change-point localization. One can see, that multiscale approach significantly improves it.

\begin{table}[]
    \centering
\caption{First type error rate and power exhibited by the proposed approach for various choice of set of window sizes $\n$ and sub-set used for bootstrap $\Istable$ at significance level $\alpha = 0.05$. For the case $\Istable \subset 1..\tau$ mean precision of change-point localization is reported as well.}
\label{restabel}
\begin{tabular}{|c|c|c|c|c|c|}
\hline
                 & \multicolumn{2}{c|}{$\Istable = 1..520$}                             & \multicolumn{3}{c|}{$\Istable = 1..100$}                                            \\ \hline
$\n$             & \begin{tabular}[c]{@{}c@{}}I type \\ error rate\end{tabular} & power & \begin{tabular}[c]{@{}c@{}}I type \\ error rate\end{tabular} & power & localization \\ \hline
$\{60\}$         & .02                                                          & 1.00  & .00                                                          & .90   & 60           \\ \hline
$\{30\}$         & .01                                                          & .90   & .00                                                          & .52   & 30           \\ \hline
$\{15\}$         & .00                                                          & .76   & .00                                                          & .38   & 15           \\ \hline
$\{7\}$          & .00                                                          & .34   & .00                                                          & .03   & 7            \\ \hline
$\{60,30\}$      & .01                                                          & .99   & .00                                                          & .84   & 47.1         \\ \hline
$\{60,30,15\}$   & .01                                                          & .99   & .00                                                          & .82   & 41.1         \\ \hline
$\{60,30,15,7\}$ & .01                                                          & .99   & .00                                                          & .78   & 42.0         \\ \hline
$\{30,15\}$      & .01                                                          & .90   & .00                                                          & .49   & 21.8         \\ \hline
$\{30,15,7\}$    & .01                                                          & .84   & .00                                                          & .34   & 19.9         \\ \hline
\end{tabular}
\end{table}

\section*{Acknowledgments}
The research of ``Project Approximative Bayesian inference and model selection for stochastic differential equations (SDEs)'' has been partially funded by Deutsche Forschungsgemeinschaft (DFG) through grant CRC 1294 ``Data Assimilation'', ``Project Approximative Bayesian inference and model selection for stochastic differential equations (SDEs)''.

We thank Vladimir Spokoiny, Karsten Tabelow, and the anonymous reviewer for comments and discussions that greatly improved the manuscript.

\bibliographystyle{plain}
\bibliography{main}

\begin{thebibliography}{10}

\bibitem{aue2013}
Alexander Aue and Lajos Horvath.
\newblock Structural breaks in time series.
\newblock {\em Journal of Time Series Analysis}, 34(1):1--16, 2013.

\bibitem{aue2009}
Alexander Aue, Siegfried Hörmann, Lajos Horváth, and Matthew Reimherr.
\newblock Break detection in the covariance structure of multivariate time
  series models.
\newblock {\em Ann. Statist.}, 37(6B):4046--4087, 12 2009.

\bibitem{Bassett_Wymbs_Porter_Mucha_Carlson_Grafton_2010}
Danielle~S. Bassett, Nicholas~F. Wymbs, Mason~a. Porter, Peter~J. Mucha,
  Jean~M. Carlson, and Scott~T. Grafton.
\newblock Dynamic reconfiguration of human brain networks during learning.
\newblock {\em Proceedings of the National Academy of Sciences}, 108(18):7641,
  2010.

\bibitem{Bauwens2006}
Luc Bauwens, S{\'{e}}bastien Laurent, and Jeroen V~K Rombouts.
\newblock {Multivariate GARCH models: a survey}.
\newblock {\em Journal of Applied Econometrics}, 21(1):79--109, jan 2006.

\bibitem{biau2016}
Gérard Biau, Kevin Bleakley, and David~M. Mason.
\newblock Long signal change-point detection.
\newblock {\em Electron. J. Statist.}, 10(2):2097--2123, 2016.

\bibitem{buzun}
Nazar Buzsun and Valeriy Avanesov.
\newblock Bootstrap for change point detection.
\newblock {\em Manuscript}, 2017.

\bibitem{chernozhukov2013}
Victor Chernozhukov, Denis Chetverikov, and Kengo Kato.
\newblock Gaussian approximations and multiplier bootstrap for maxima of sums
  of high-dimensional random vectors.
\newblock {\em Ann. Statist.}, 41(6):2786--2819, 12 2013.

\bibitem{anticonc}
Victor Chernozhukov, Denis Chetverikov, and Kengo Kato.
\newblock Comparison and anti-concentration bounds for maxima of gaussian
  random vectors.
\newblock {\em Probability Theory and Related Fields}, 162(1):47--70, Jun 2015.

\bibitem{Chernozhukov2014}
Victor Chernozhukov, Denis Chetverikov, and Kengo Kato.
\newblock Central limit theorems and bootstrap in high dimensions.
\newblock {\em Ann. Probab.}, 45(4):2309--2352, 07 2017.

\bibitem{cho2016}
Haeran Cho.
\newblock Change-point detection in panel data via double cusum statistic.
\newblock {\em Electron. J. Statist.}, 10(2):2000--2038, 2016.

\bibitem{Cho2015}
Haeran Cho and Piotr Fryzlewicz.
\newblock Multiple-change-point detection for high dimensional time series via
  sparsified binary segmentation.
\newblock {\em Journal of the Royal Statistical Society Series B},
  77(2):475--507, 2015.

\bibitem{Serban2007}
Mihaela \c{S}erban, Anthony Brockwell, John Lehoczky, and Sanjay Srivastava.
\newblock {Modelling the dynamic dependence structure in multivariate financial
  time series}.
\newblock {\em Journal of Time Series Analysis}, 28(5):763--782, 2007.

\bibitem{limitTheoremsCPAnalysis}
Miklós Csörgő and Lajos Horváth.
\newblock {\em Limit theorems in change-point analysis}.
\newblock Wiley series in probability and statistics. J. Wiley \& Sons,
  Chichester, New York, 1997.

\bibitem{Deng}
Hang Deng and Cun-Hui Zhang.
\newblock Beyond gaussian approximation: Bootstrap for maxima of sums of
  independent random vectors.
\newblock 05 2017.

\bibitem{Engle1990}
Robert~F. Engle, Victor~K. Ng, and Michael Rothschild.
\newblock {Asset pricing with a factor-arch covariance structure. Empirical
  estimates for treasury bills}.
\newblock {\em Journal of Econometrics}, 45(1-2):213--237, 1990.

\bibitem{journals/brain/Friston11}
Karl~J. Friston.
\newblock Functional and effective connectivity: A review.
\newblock {\em Brain Connectivity}, 1(1):13--36, 2011.

\bibitem{sara}
Jana Jankov\'{a} and Sara van~de Geer.
\newblock Confidence intervals for high-dimensional inverse covariance
  estimation.
\newblock {\em Electron. J. Statist.}, 9(1):1205--1229, 2015.

\bibitem{jirak2015}
Moritz Jirak.
\newblock Uniform change point tests in high dimension.
\newblock {\em Ann. Statist.}, 43(6):2451--2483, 12 2015.

\bibitem{Lavielle2006}
M.~Lavielle and G.~Teyssi{\`e}re.
\newblock Detection of multiple change-points in multivariate time series.
\newblock {\em Lithuanian Mathematical Journal}, 46(3):287--306, 2006.

\bibitem{Matteson2015}
David~S. Matteson and Nicholas~A. James.
\newblock A nonparametric approach for multiple change point analysis of
  multivariate data.
\newblock {\em Journal of the American Statistical Association},
  109(505):334--345, 2014.

\bibitem{Mikosch2009}
Thomas Mikosch, S{\o}ren Johansen, and Eric Zivot.
\newblock {Handbook of Financial Time Series}.
\newblock {\em Time}, 468(1996):671--693, 2009.

\bibitem{aries2007optimal}
A.N. Shiryaev.
\newblock {\em Optimal Stopping Rules}.
\newblock Stochastic Modelling and Applied Probability. Springer Berlin
  Heidelberg, 2007.

\bibitem{SpokWillrich}
V.~{Spokoiny} and N.~{Willrich}.
\newblock {Bootstrap tuning in ordered model selection}.
\newblock {\em ArXiv e-prints}, July 2015.

\bibitem{sporns2011}
O.~Sporns.
\newblock {\em Networks of the brain}.
\newblock The MIT Press, 2011.

\bibitem{xie2013}
Yao Xie and David Siegmund.
\newblock Sequential multi-sensor change-point detection.
\newblock {\em Ann. Statist.}, 41(2):670--692, 04 2013.

\bibitem{zou2014}
Changliang Zou, Guosheng Yin, Long Feng, and Zhaojun Wang.
\newblock Nonparametric maximum likelihood approach to multiple change-point
  problems.
\newblock {\em Ann. Statist.}, 42(3):970--1002, 06 2014.

\end{thebibliography}

\newpage

\setcounter{equation}{0}
\setcounter{figure}{0}
\setcounter{table}{0}
\setcounter{section}{0}
\setcounter{theorem}{0}
\setcounter{lemma}{0}
\setcounter{page}{1}

%!TEX root = main.tex
\section{Notation}
The proof make use of numerous notation. For convinience of the reader we provide the Table \ref{cheat}.

\begin{table}[]
	\begin{tabular}{|l|l|l|}
		\hline
		Notation                    & Definition                                                                                     & Meaning                                                                                                             \\ \hline
		$Y_{\cdot i}$               & see \eqref{BFMTogether}                                                                        & \begin{tabular}[c]{@{}l@{}}Gaussian vectors involved in \\ Gaussian Approximation of $B_n$\end{tabular}             \\ \hline
		$Y_{\cdot i}^{\flat}$       & see \eqref{BFMbootstrapTogether}                                                               & \begin{tabular}[c]{@{}l@{}}Gaussian vectors involved in \\ Gaussian Approximation of $B_n^{\flat}$\end{tabular}     \\ \hline
		$\trueSigmaY$               & see \eqref{truesigmaYDef}                                                                      & \begin{tabular}[c]{@{}l@{}}Average covariance matrix of \\ vectors $Y_{\cdot i}$\end{tabular}                       \\ \hline
		$\hatSigmaY$                & see \eqref{hatsigmaYDef}                                                                       & \begin{tabular}[c]{@{}l@{}}Average covariance matrix of vectors \\  $Y_{\cdot i}^{\flat}$\end{tabular}              \\ \hline
		$\deltaY$                   & see Lemma \ref{trueHatSigma}                                                                   & Bound for $\infnorm{\hatSigmaY- \trueSigmaY}$                                                                       \\ \hline
		$\Rb$                       & \begin{tabular}[c]{@{}l@{}}see Lemma\ref{SGar}\end{tabular}                                    & \begin{tabular}[c]{@{}l@{}}Residual term in Gaussian \\ approximation of $B_n$\end{tabular}                         \\ \hline
		$\RBboot$                   & see Lemma \ref{SbGar}                                                                          & \begin{tabular}[c]{@{}l@{}}Residual term in Gaussian \\ approximation of $B_n^{\flat}$\end{tabular}                 \\ \hline
		$\Zbound{s}{\kappa}$        & see Lemma \ref{zboundlemmaAll}                                                                 & \begin{tabular}[c]{@{}l@{}}Uniform probabilistic \\ bound for sup-norms of $Z_i$\end{tabular}                       \\ \hline
		$\probzbound{s}{\kappa}$    & $s\ex{-\kappa}$                                                                                & \begin{tabular}[c]{@{}l@{}}The probability for the bound \\ $\Zbound{s}{\kappa}$ to be exceeded\end{tabular}        \\ \hline
		$W_i$                       & $\overline{X_i X_i^T}$                                                                         & \begin{tabular}[c]{@{}l@{}}A vectorized summand involved in \\ the definition of an empirical covariance\end{tabular} \\ \hline
		$\trueOmega$                & $\E{\left(W_1 - \overline{\trueSigma}\right) \left(W_1 - \overline{\trueSigma}\right)^T}$      & Covariance of $W_i$                                                                                                 \\ \hline
		$\hatOmega$                 & $\empE{{\left(W_i - \overline{\trueSigma}\right) \left(W_i - \overline{\trueSigma}\right)^T}}$ & Empirical covariance of $W_i$ w.r.t to $\Istable$                                                                   \\ \hline
		$\delta_s(\chi)$            & see Lemma \ref{sigmaconcentration}                                                             & \begin{tabular}[c]{@{}l@{}}Probabilistic bound for \\ $\infnorm{\overline{\trueSigma}  - \empE{W_i}}$\end{tabular}  \\ \hline
		$\probSigmaBound{s}{\chi}$  & $2\ex{-\chi}$                                                                                  & \begin{tabular}[c]{@{}l@{}}Probability for the bound \\ $\delta_s(\chi)$ to be exceeded\end{tabular}                \\ \hline
		$\Wbound{s}{\x}$            & $\x^2 + \infnorm{\trueSigma}$                                                                  & \begin{tabular}[c]{@{}l@{}}Uniform probabilistic bound for \\ $\infnorm{W_i - \overline{\trueSigma}}$\end{tabular}  \\ \hline
		$\probwbound{s}{\x}$        & $ps\ex{-\x}$                                                                                   & \begin{tabular}[c]{@{}l@{}}Probability for the bound \\ $\Wbound{s}{\x}$ to be exceeded\end{tabular}                \\ \hline
		$\deltaOmega{s}{t}{\x}$     & see Lemma \ref{omegaConcentation}                                                              & Probabilistic bound for $\infnorm{\trueOmega - \hatOmega}$                                                          \\ \hline
		$\deltaOmegaProb{s}{t}{\x}$ & $p^2e^{-t} + \probwbound{s}{\x}$                                                               & \begin{tabular}[c]{@{}l@{}}Probability for the bound \\ $\deltaOmega{s}{t}{\x}$ to be exceeded\end{tabular}         \\ \hline
	\end{tabular}\label{cheat}\caption{Proof notation}
\end{table}

\section{Proof of bootstrap validity result} \label{secProof}

\begin{theorem} \label{theTheoremFin}
	Let Assumption \ref{subGaussianVector} hold and let $X_1, X_2,...,X_N$ be i.i.d. Moreover, assume that the residual $R < \alpha/2$
	where

	\begin{equation}
	R\coloneqq \left(3+2\abs{\n}\right)  \left(2\Rb + 2\RBboot + \Rsigma\right),
	\end{equation}

	\begin{equation}
	\Rsigma \coloneqq C\deltaY^{1/3} \log^{2/3}\left(Tp^2\right),
	\end{equation}
	$\deltaY$,  $\Rb$ and $\RBboot$ are defined in Lemmas \ref{trueHatSigma}, \ref{SGar} and \ref{SbGar} respectively and $C$ is an independent positive constant.
	Then for all positive $\x$, $t$ and $\chi$
	it holds that
	\begin{equation}
	\abs{\Prob{\forall n \in \n : \Bn \le \xalphan} - (1-\alpha)}  \le R + 2(1-q),
	\end{equation}
	where \begin{equation} \label{probsucc}
	q\coloneqq1 - \probzbound{s}{\kappa} - \deltaOmegaProb{s}{t}{\x} - \probwbound{s}{\x} - \probSigmaBound{s}{\chi},
	\end{equation}
	probabilities $\probzbound{s}{\kappa}$, $\deltaOmegaProb{s}{t}{\x}$, $\probwbound{s}{\x}$ and $\probSigmaBound{s}{\chi}$  come from Lemmas \ref{zboundlemmaAll}, \ref{omegaConcentation}, \ref{wboundlemma} and \ref{sigmaconcentration} respectively
	and quantiles $\{\xalphan\}_{n \in \n}$ are yielded by bootstrap procedure described in Section \ref{secBOoot}.
\end{theorem}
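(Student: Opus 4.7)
The target quantity is $P_1 \coloneqq \Prob{\forall n \in \n : \Bn \le \xalphan}$, and the bound to prove is $\abs{P_1 - (1-\alpha)} \le R + 2(1-q)$. My overall strategy is threefold: (i) translate both $\{\Bn\}$ and $\{\Bbn\}$ into Gaussian maxima via high-dimensional CLT; (ii) show the two induced Gaussian laws are close via a covariance-comparison argument; (iii) invoke a ``sandwiching'' technique to decouple the randomness of $\xalphan$ from that of $\Bn$. Throughout I would work on a good event of probability at least $q$ on which all empirical covariance and moment quantities entering Lemmas \ref{SGar}--\ref{SbGar} are well controlled, which is precisely the content of Lemmas \ref{zboundlemmaAll}, \ref{omegaConcentation}, \ref{wboundlemma} and \ref{sigmaconcentration}.

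\textbf{Gaussian approximation and covariance comparison.} First I would apply Lemma \ref{SGar} to approximate the joint distribution of the stacked vector $(\sqrt{n/2}\,\inv{S}\overline{(\hatSigma^l_n(t) - \hatSigma^r_n(t))})_{n \in \n,\, t \in \Tn}$ by a centered Gaussian with matching covariance; the approximation error in the hyperrectangle metric is $\Rb$. Lemma \ref{SbGar} does the same for the bootstrap analogue conditional on the data, at cost $\RBboot$. The two resulting Gaussian laws differ only in their covariances, by at most $\deltaY$ in sup-norm (Lemma \ref{trueHatSigma}). A Chernozhukov--Chetverikov--Kato comparison between the two Gaussian maxima converts this discrepancy into the additive cost $\Rsigma \sim \deltaY^{1/3}\log^{2/3}(Tp^2)$. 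Chaining the three bounds yields a Lemma \ref{tvlemma}-style inequality uniform over deterministic thresholds $(z_n)_{n \in \n}$:
$$\abs{\Prob{\forall n : \Bn \le z_n} - \Probboot{\forall n : \Bbn \le z_n}} \le 2\Rb + 2\RBboot + \Rsigma.$$

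\textbf{Sandwiching the random quantile.} By definition of $\alphacorrected$, the bootstrap world satisfies $\Probboot{\forall n : \Bbn \le \xalphan} \ge 1-\alpha$, with equality up to atoms of $\mathbb{P}^\flat$. If $\xalphan$ were deterministic, a single application of the inequality above would immediately deliver the conclusion. To handle randomness and dependence between $\xalphan$ and $\Bn$, I would invoke Lemma \ref{sandwitch}: on the good event I construct deterministic sandwich thresholds $y_n^- \le \xalphan \le y_n^+$ and deterministic confidence levels $\alpha^\pm$ with $\abs{\alpha^\pm - \alpha} = O(2\Rb + 2\RBboot + \Rsigma)$, such that $\Probboot{\forall n : \Bbn \le y_n^\pm}$ lies in the narrow interval $[1-\alpha^\mp,\,1-\alpha^\pm]$. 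Monotonicity then sandwiches $P_1$ between $\Prob{\forall n : \Bn \le y_n^-}$ and $\Prob{\forall n : \Bn \le y_n^+}$, and transferring both endpoints to the bootstrap world via the uniform comparison pins $P_1$ within the claimed tolerance of $1-\alpha$. The combinatorial factor $(3 + 2\abs{\n})$ in $R$ tracks the number of independent invocations of the comparison: two per scale for the two sides of the sandwich (hence $2\abs{\n}$), plus three chaining steps connecting $P_1 \leftrightarrow \Probboot{\forall n : \Bbn \le \xalphan} \leftrightarrow 1-\alpha$ and bridging between $\alphacorrected$ and its deterministic surrogates $\alpha^\pm$. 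The additive $2(1-q)$ accounts for complementing the good event once in the direct and once in the bootstrap world.

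\textbf{Main obstacle.} The hardest step is the sandwich construction. In the classical continuous setting of \cite{SpokWillrich}, $y_n^\pm$ are produced by direct inversion of a continuous quantile function. Here, however, $\mathbb{P}^\flat$ is atomic -- drawn from $\bigcup_{i \in \Istable}\{\pm\hat Z_i\}$ -- so $z^\flat_n(\cdot)$ is a step function and $\alphacorrected$ is a supremum over a discrete level set, so neither continuity nor exact inversion is available. The construction of $y_n^\pm$ must instead be driven by Gaussian anti-concentration, which bridges across atoms using a quantified smoothness of the Gaussian maximum, combined with the joint approximation of Step 1 applied simultaneously to the forward and bootstrap laws. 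Propagating this through the multiscale multiplicity correction over $\n$ is what generates the $\abs{\n}$-linear combinatorial factor in $R$ and constitutes the principal extension over \cite{buzun}, whose argument presumed continuity of both $\mathbb{P}$ and $\mathbb{P}^\flat$.
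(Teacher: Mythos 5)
Your proposal follows essentially the same route as the paper: Gaussian approximation of $\{\Bn\}$ and $\{\Bbn\}$ via Lemmas \ref{SGar} and \ref{SbGar}, a Gaussian comparison at cost $\Rsigma$ chained into the uniform-in-thresholds bound of Lemma \ref{tvlemma} on the good event of probability $q$, the sandwiching Lemma \ref{sandwitch} to decouple the random thresholds $\xalphan$ from $\Bn$, and a final good-event correction (the paper's Lemma \ref{conditioning}) giving the additive $2(1-q)$. Your bookkeeping of where the doubled $2\Rb+2\RBboot$ and the factor $(3+2\abs{\n})$ arise is slightly reshuffled relative to the paper (there the chained bound is $\Rb+\RBboot+\Rsigma$ and the sandwich lemma contributes $\delta=R+\Rb+\RBboot$ with the $(3+2\abs{\n})$ factor from its Taylor/quantile argument), but the total is identical and the argument is sound.
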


\paragraph{Proof sketch}
The proof consists of four straightforward steps.
\begin{enumerate}
	\item Approximate statistics $\Bn$ by norms of a high-dimensional Gaussian vector up to the residual $\Rb$ using the high dimensional central limits theorem by \cite{Chernozhukov2014}.
	\item Similarly, we approximate bootstrap counterparts $\Bbn$ of the statistics up to the residual $\RBboot$.
	%real-world counterpart of what?
	\item Prove that the covariance matrix of the Gaussian vector used to approximate $\Bbn$ in step 2 is concentrated in the ball of radius $\deltaY$ centered at its real-world counterpart involved in step 1 and employ the Gaussian comparison result provided by \cite{Chernozhukov2014} and \cite{chernozhukov2013}.
	\item Finally, obtain the bootstrap validity result combining the results of steps 1-3.
\end{enumerate}

\begin{proof}
	Proof of the Theorem consists in applying Lemmas \ref{SGar}, \ref{SbGar} and \ref{tvlemma} justifying applicability of sandwiching Lemma \ref{sandwitch} on a set of probability not less than $q$ (defined by \eqref{probsucc}) which are followed by applying Lemma \ref{conditioning}.
\end{proof}

\paragraph{Finite-sample-size bootstrap validity result discussion}

The remainder terms $\Rb$, $\RBboot$ and $\Rsigma$ involved in the statement of Theorem \ref{theTheorem} are rather complicated. Here we just note that for $p$, $s$, $N$, $\nmin$, $\nmax \rightarrow +\infty$, $N > 2\nmax$, $\nmax \ge \nmin$

\begin{equation}
\Rb \le C_1\left(\frac{L^4\log^{7}\left(p^2T\nmax \right)}{\nmin}\right)^{1/6},
\end{equation}

\begin{equation}\label{rbbootdisc}
\RBboot\le C_2\left(\frac{L^4\log^{7}\left(p^2T\nmax \right)}{\nmin}\right)^{1/6}\log(ps),
\end{equation}

\begin{equation}
\Rsigma \le C_3\left(\frac{L^4\log^4(ps)}{s}\right)^{1/6} \log^{2/3} \left(p^2T\right),
\end{equation}
while the parameters $\kappa,\x, \chi, t$ are chosen in order to ensure the probability $q$ defined by \eqref{probsucc} to be above $0.995$, e.g.

\begin{equation}\label{xdef}
\x = 7.61 + \log (ps),
\end{equation}

\begin{equation}\label{kappadef}
\kappa = 6.91 + \log s,
\end{equation}

\begin{equation}\label{tdef}
t = 7.61 + 2 \log p,
\end{equation}

\begin{equation}\label{chidef}
\chi = 6.91.
\end{equation}
Here $C_1, C_2, C_3$ are some positive constants independent of $N, \n, p,s, L$. In fact, probability $q$ can be made arbitrarily close to $1$ at the cost of worse constants.

It is worth noticing that, unusually, remainder terms $\Rb$, $\RBboot$ and $\Rsigma$ grow with $T$ defined by \eqref{Tdef} and hence with the sample size $N$ but the dependence is logarithmic. Indeed, we gain nothing from longer samples since we use only $2n$ data points each time.

\begin{lemma}\label{conditioning}
	Consider a measure $\mathbb{P}$ and a pair of sets $A$ and $B$. Then denoting $p \coloneqq \Prob{B}$
	\begin{equation}\label{key}
	\abs{\Prob{A} - \Prob{A | B}} \le 2(1-p).
	\end{equation}
\end{lemma}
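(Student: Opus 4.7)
The plan is to reduce the claim to an elementary calculation involving the identity $\mathbb{P}(A|B) = \mathbb{P}(A \cap B)/\mathbb{P}(B)$ together with the partition $\mathbb{P}(A) = \mathbb{P}(A \cap B) + \mathbb{P}(A \cap B^c)$. First I would dispense with the degenerate case $p = 0$: the claimed bound is then $2$, which trivially dominates any difference of probabilities (and $\mathbb{P}(A|B)$ is undefined, so the statement is vacuous). Hence one may assume $p > 0$ and work with the standard definition of conditional probability.

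Next, I would write out the difference explicitly. Substituting the partition of $\mathbb{P}(A)$ into $\mathbb{P}(A) - \mathbb{P}(A|B)$ gives
\begin{equation}
\mathbb{P}(A) - \mathbb{P}(A|B) \;=\; \mathbb{P}(A \cap B) + \mathbb{P}(A \cap B^c) - \frac{\mathbb{P}(A\cap B)}{p} \;=\; \mathbb{P}(A\cap B^c) - \frac{(1-p)\,\mathbb{P}(A \cap B)}{p}.
\end{equation}
Taking absolute values and applying the triangle inequality, the task reduces to bounding the two nonnegative terms $\mathbb{P}(A \cap B^c)$ and $(1-p)\mathbb{P}(A \cap B)/p$ separately.

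For the first term, monotonicity of $\mathbb{P}$ yields $\mathbb{P}(A \cap B^c) \le \mathbb{P}(B^c) = 1-p$. For the second, $\mathbb{P}(A \cap B) \le \mathbb{P}(B) = p$, so the factor of $p$ cancels and one obtains $(1-p)\mathbb{P}(A\cap B)/p \le 1-p$. Summing the two bounds gives exactly $2(1-p)$, which is the assertion. There is no genuine obstacle here; the only subtlety is to make sure the algebraic rewrite is carried out so that each term is manifestly dominated by $1-p$, and to handle the (trivial) edge case $p=0$. The factor $2$ in the bound is simply the price of applying the triangle inequality to the two summands rather than exploiting their opposite signs, which would not improve the order of magnitude and is not needed in the intended application inside the proof of Theorem~\ref{theTheoremFin}.
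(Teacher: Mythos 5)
Your proof is correct and follows essentially the same route as the paper's: both decompose $\Prob{A}$ over $B$ and $\overline{B}$ (the paper via the law of total probability in conditional form, you via intersections), apply the triangle inequality, and bound each of the two resulting terms by $1-p$. The only difference is cosmetic notation, plus your explicit handling of the degenerate case $p=0$, which the paper leaves implicit.
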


\begin{proof}
	\begin{equation}
	\begin{split}
	\abs{\Prob{A} - \Prob{A | B}} &= \abs{\Prob{A | B} \Prob{B} + \Prob{A | \overline{B}} \Prob{\overline{B}} - \Prob{A| B}}\\
	& = \abs{\Prob{A | B} \left(p - 1\right) + \Prob{A | \overline{B}} (1-p)} \\
	& \le \abs{\Prob{A | B} \left(p - 1\right)} + \abs{\Prob{A | \overline{B}} (1-p)} \\
	& \le 2(1-p).
	\end{split}
	\end{equation}
\end{proof}

%!TEX root = main.tex
\section{Proof of the sensitivity result} \label{sensResProof}

\begin{theorem}\label{senstFin}
	Let Assumption \ref{subGaussianVector} hold. Also let $\deltaY < 1/2$ 	and
	\begin{equation}\label{smallresidual}
	\RBboot< \frac{\alpha}{6\abs{\n}},
	\end{equation}
	where $\deltaY$ and $\RBboot$ come from Lemmas \ref{trueHatSigma} and  \ref{SbGar}. Moreover, assume $\Istable \subseteq 1..\tau$ and $\tau \ge n_{suff}$, where
	\begin{equation} \label{nbound}
	n_{suff}\coloneqq \left(\frac{\q \infnorm{\inv{S}} - 2\rho  + \sqrt{\left(2\rho - \q\infnorm{\inv{S}}\right)^2 - 4\Delta \rho^2}}{\sqrt{2}\Delta}\right)^2,
	\end{equation}

	\begin{equation}\label{qdef}
	\q = \sqrt{2\left(1+\deltaY\right) \log \left(\frac{2  N\abs{\n} p^2}{\alpha -3\abs{\n}\RBboot}\right)},
	\end{equation}
	\begin{equation}\label{key}
	\rho = \sqrt{2\log p +\chi},
	\end{equation}
	where $\Delta$ denotes the break extent defined by \eqref{breakextentdef}.
	Let it hold for the widest window that $\nmax > n_{suff}$.
	Then with probability at least

	\begin{equation}
	\label{probsucc2}
	1 - \probzbound{s}{\kappa} - 3\probSigmaBound{s}{\chi}-\deltaOmegaProb{s}{t}{\x} - \probwbound{s}{\x}
	\end{equation}
	where $\probzbound{s}{\kappa}$, $\deltaOmegaProb{s}{t}{\x}$, $\probwbound{s}{\x}$ and $\probSigmaBound{s}{\chi}$,  come from Lemmas \ref{zboundlemmaAll}, \ref{omegaConcentation}, \ref{wboundlemma} and \ref{sigmaconcentration} respectively, the hypothesis $\Hnull$ will be rejected by the proposed approach at confidence level $\alpha$.

\end{theorem}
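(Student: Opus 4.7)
}

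The plan is to exhibit a single central point $t^\star \in \mathbb{T}_{\nmax}$ at which, with the probability stated in \eqref{probsucc2}, the test statistic $B_{\nmax}(t^\star)$ strictly exceeds the bootstrap threshold $\xalphannn{\nmax}{\alpha}$. Because $B_{\nmax} \ge B_{\nmax}(t^\star)$, this automatically forces rejection of $\Hnull$ at level $\alpha$. The natural choice is $t^\star = \tau + 1$, which under the condition $\tau \ge n_{suff}$ and the width constraints on $\nmax$ places $\Il$ entirely in the pre-break block (covariance $\SigmaOne$) and $\Ir$ entirely in the post-break block (covariance $\SigmaTwo$). In particular, $\Istable \subseteq 1..\tau$ guarantees that the scaling matrix $S$ and the bootstrap cloud are driven only by the pre-break law, which keeps the statement about the bootstrap quantile clean.

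From there I decompose $B_{\nmax}(t^\star)$ by the triangle inequality in the form
\[
B_{\nmax}(t^\star) \;\ge\; \infnorm{\sqrt{\nmax/2}\,\inv{S}\,\overline{\SigmaOne-\SigmaTwo}} \;-\; \infnorm{\sqrt{\nmax/2}\,\inv{S}\,\overline{\hatSigma^l_{\nmax}(t^\star)-\SigmaOne}} \;-\; \infnorm{\sqrt{\nmax/2}\,\inv{S}\,\overline{\hatSigma^r_{\nmax}(t^\star)-\SigmaTwo}}.
\]
The first (deterministic) term is the signal, of the order of $\sqrt{\nmax/2}\,\Delta$ after accounting for the diagonal scaling. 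The two remaining terms are concentration deviations of sample covariances around their expectations; under Assumption \ref{subGaussianVector} each coordinate of $\hatSigma^l - \SigmaOne$ (resp.\ $\hatSigma^r - \SigmaTwo$) is a normalized sum of centered sub-exponential variables, so a Bernstein-type tail bound together with a union bound over the $p^2$ entries delivers a uniform control of order $\rho = \sqrt{2\log p + \chi}$, via the same covariance concentration machinery already invoked in Lemma \ref{sigmaconcentration}.

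For the bootstrap side I would first use the union-bound interpretation of the multiplicity correction \eqref{alphastardef}, which yields $\alphacorrected \ge \alpha/\abs{\n}$ (up to a bootstrap Gaussian approximation error bounded by $\RBboot$, absorbed by the assumption \eqref{smallresidual}). Then $\xalphannn{\nmax}{\alpha} = z^\flat_{\nmax}(\alphacorrected) \le z^\flat_{\nmax}\bigl((\alpha - 3\abs{\n}\RBboot)/\abs{\n}\bigr)$ thanks to monotonicity of the quantile function. Applying the bootstrap-side Gaussian approximation in Lemma \ref{SbGar} and the standard Gaussian maximal inequality (the max of $N\abs{\n}p^2$ jointly Gaussian coordinates of variance at most $1+\deltaY$) produces precisely the quantity $\q$ from \eqref{qdef} as an upper envelope for the bootstrap quantile.

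The final step is algebraic: combine the signal lower bound and the noise/quantile upper bounds to arrive at a condition of the form $\sqrt{2\nmax}\,\Delta > (\text{threshold in }\q,\rho,\infnorm{\inv S},\Delta)$, and read off that the quadratic inequality in $\sqrt{\nmax}$ it encodes is solved precisely by $\nmax > n_{suff}$ with $n_{suff}$ as in \eqref{nbound} — the square root in \eqref{nbound} is simply the discriminant of this quadratic. I expect the main obstacle to be keeping the scaling by the diagonal $\inv{S}$ honest on the post-break side: the summands on $\Ir$ have variance driven by $\SigmaTwo$ rather than $\SigmaOne$, so one must estimate the resulting mismatch $\sigma_{jk}^{(2)}/\sigma_{jk}$, exploiting $\deltaY < 1/2$ and the relation between $\SigmaOne$ and $\SigmaTwo = \SigmaOne + (\SigmaTwo-\SigmaOne)$ with $\infnorm{\SigmaTwo-\SigmaOne} = \Delta$; this is exactly where the $\Delta$ inside the discriminant $(2\rho - \q\infnorm{\inv S})^2 - 4\Delta\rho^2$ comes from. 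Collecting the failure probabilities of the various auxiliary events (signal concentration, bootstrap approximation, variance estimation) via Lemmas \ref{zboundlemmaAll}, \ref{omegaConcentation}, \ref{wboundlemma} and \ref{sigmaconcentration} yields the probability lower bound \eqref{probsucc2} and finishes the argument.
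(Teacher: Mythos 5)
Your proposal is correct and follows essentially the same route as the paper: the paper likewise lower-bounds the statistic at the window centred at the change-point by the signal minus two applications of Lemma \ref{sigmaconcentration}, and upper-bounds the bootstrap threshold by $\q$ via the per-window level of roughly $\alpha/\abs{\n}$ from the multiplicity correction, Lemma \ref{SbGar} with assumption \eqref{smallresidual}, the bound $\infnorm{\hatSigmaY}\le 1+\deltaY$ from Lemma \ref{trueHatSigma}, and the Gaussian maximal inequality (Lemma \ref{maxBound}), before solving the resulting quadratic in $\sqrt{n}$ to obtain $n_{suff}$. The one inaccuracy is your reading of the discriminant: the $4\Delta\rho^2$ term does not arise from an $S$-scaling mismatch on the post-break window (Lemma \ref{sigmaconcentration} is entrywise and does not involve $S$, so no control of the ratio of post- to pre-break standard deviations is needed), but is simply the $4AC$ cross term of the quadratic in $\sqrt{n}$, whose leading coefficient is proportional to the signal $\Delta$ and whose constant term is proportional to $\rho^2$, coming from the $(2\log p+\chi)/n$ part of $\delta_n(\chi)$.
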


\paragraph{Discussion of the finite-sample-size sensitivity result}
The expression \eqref{nbound} and the residual $\RBboot$ involved in the statement of Theorem~\ref{senst} are rather complicated. Here we note that for $N$, $s$ and $p \rightarrow +\infty$, for some positive constant $C_4$ independent of $N$, $s$, $p$ and $\Delta$ it holds  that

\begin{equation}\label{nsuffbound}
n_{suff} \le C_4\left(1+\frac{\log^2(ps)}{\sqrt{s}}\right)\frac{\log \left(\abs{\n}Np^2\right)}{\Delta^2}
\end{equation}
while the bound \eqref{rbbootdisc} for $\RBboot$ holds as well, and the parameters $\x$, $t$ and $\kappa$ may be chosen as specified by \eqref{xdef}, \eqref{tdef} and \eqref{kappadef}  respectively and $\chi$ may be chosen as  $\chi=7.32$ in order to ensure the probability $\eqref{probsucc2}$ to be at least $0.99$.

As expected, the bound for sufficient window size decreases with the growth of the break extent $\Delta$ and the size of the set $\Istable$, but increases with dimensionality $p$. It is worth noticing, that the latter dependence is only logarithmic. And again, in the same way as with Theorem \ref{theTheoremFin}, the bound increases with the sample size $N$ (only logarithmically) since we use only $2n$ data points.

The assumption $\Istable \subseteq 1..\tau$ is only technical. The result may be proven without relying on it by methodologically the same argument.

Obviously, we still cannot explicitly compute $n_{suff}$, since it depends on the underlying distributions. However this result guarantees that the sensitivity of the test does not vanish in high-dimensional setting.

\begin{proof}[Proof of Theorem \ref{senstFin}]
	Consider a pair of centered normal vectors

	\begin{equation}
	\eta \coloneqq \left(\begin{array}{cccc}
		\eta^1 &\eta^2 &... &\eta^{\abs{\n}}
	\end{array}\right) \sim \N{0}{\trueSigmaY},
	\end{equation}
	\begin{equation}
		\zeta \coloneqq \left(\begin{array}{cccc}
			\zeta^1 &\zeta^2 &... &\zeta^{\abs{\n}}
		\end{array}\right) \sim \N{0}{\hatSigmaY},
	\end{equation}
	\begin{equation}
		\trueSigmaY \coloneqq\frac{1}{{2\nmax}} \sum_{j=1}^{2\nmax} \Var{Y^{n}_{\cdot j}},
	\end{equation}
	\begin{equation}
		\hatSigmaY \coloneqq\frac{1}{{2\nmax}} \sum_{j=1}^{2\nmax} \Var{Y^{n\flat}_{\cdot j}},
	\end{equation}
	where vectors $Y^n_{\cdot j}$ and $Y^{n\flat}_{\cdot j}$ are defined in proofs of Lemmas \ref{SGar} and \ref{SbGarRandomBounds} respectively.
	Lemma \ref{maxBound} applies here and yields for all positive $\q$
	\begin{equation}
	\Prob{ \infnorm{\zeta^{\nmax}} \ge \q } \le 2\abs{\Tnmax}p^2\exp\left(-\frac{\q^2 }{2\infnorm{\hatSigmaY} }\right),
	\end{equation}
	where $\hatSigmaY = \Var{\zeta}$ and $\abs{\Tnmax}$ is the number of central points for window of size $\nmax$.  Applying Lemma \ref{trueHatSigma} on a set of probability at least $1-\deltaOmegaProb{s}{t}{\x} - \probwbound{s}{\x} - \probSigmaBound{n}{\chi}$ yields
	$\infnorm{\trueSigmaY - \hatSigmaY} \le \deltaY$, and hence, due to the fact that $\infnorm{\trueSigmaY} = 1$ by construction,
	\begin{equation}
	\Prob{ \infnorm{\zeta^{\nmax}} \ge \q } \le 2\abs{\Tnmax} p^2\exp\left(-\frac{\q^2 }{2\left(1 + \deltaY\right)  }\right).
	\end{equation}
	Due to Lemma \ref{SbGar} and continuity of Gaussian c.d.f.
	\begin{equation}
	\Probboot{B^\flat_{\nmax} \ge \xalphann{\nmax}} \ge \alpha/\abs{\n} -  2\RBboot
	\end{equation}
	and due to Lemma \ref{SbGar} along with the fact that $\abs{\Tnmax} < N$, choosing $\q$ as proposed by equation \eqref{qdef}
	we ensure that $\xalphann{\nmax} \le\q $.

	Now using Lemma \ref{sigmaconcentration} twice for $\hatSigma^l_n(\tau)$ and $\hatSigma^r_n(\tau)$ respectively we obtain that with probability at least $1-2\probSigmaBound{n}{\chi}$

	\begin{equation}
	\Bn \ge \sqrt{\frac{n}{2}}\infnorm{{S}}\left(\Delta - 2\delta_n(\chi)\right).
	\end{equation}
	Finally, we notice that due to definition \eqref{nbound} of $n_{suff}$  and since $\nmax > n_{suff}$ $$B_{\nmax} > \q$$ and therefore, $\Hnull$ will be rejected.
\end{proof}

\begin{lemma}
	Consider a centered random Gaussian vector $\xi\in \R^p$ with an arbitrary covariance matrix $\Sigma$.
	For any positive $\q$ it holds that
	\begin{equation}
	\Prob{\max_i \xi_i \ge \q } \le p\exp\left(-\frac{\q^2 }{2\infnorm{\Sigma} }\right).
	\end{equation}
\end{lemma}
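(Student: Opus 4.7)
The plan is to prove this via a straightforward union bound combined with the standard one-dimensional Gaussian tail estimate. First I would note that each coordinate $\xi_i$ is a centered Gaussian with variance $\Sigma_{ii}$, and because $\Sigma$ is a covariance matrix its diagonal entries are non-negative, so $\Sigma_{ii} \le \infnorm{\Sigma}$ (here $\infnorm{\cdot}$ is the entrywise sup norm as defined in the paper).

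Next I would apply the standard Chernoff/Mills-ratio bound: for a centered Gaussian $\xi_i$ with variance $\Sigma_{ii}$ and for any $\q > 0$,
\begin{equation}
\Prob{\xi_i \ge \q} \le \exp\left(-\frac{\q^2}{2\Sigma_{ii}}\right) \le \exp\left(-\frac{\q^2}{2\infnorm{\Sigma}}\right),
\end{equation}
where the first inequality is the usual sub-Gaussian tail (derivable in one line from the moment generating function $\E{\exp(\lambda \xi_i)} = \exp(\lambda^2 \Sigma_{ii}/2)$ by optimizing over $\lambda > 0$), and the second uses the bound on the variance from the previous step.

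Finally I would close the argument with the union bound over the $p$ coordinates:
\begin{equation}
\Prob{\max_i \xi_i \ge \q} \le \sum_{i=1}^{p} \Prob{\xi_i \ge \q} \le p \exp\left(-\frac{\q^2}{2\infnorm{\Sigma}}\right).
\end{equation}
There is no real obstacle here; the only subtle point worth stating carefully is the identification of the marginal variance as bounded by $\infnorm{\Sigma}$, which relies on the sign convention for the entrywise sup norm of a matrix and on non-negativity of $\Sigma_{ii}$. If $\infnorm{\Sigma}$ happens to be realized by an off-diagonal entry, the inequality $\Sigma_{ii} \le \infnorm{\Sigma}$ is still automatic, so the bound is safe in all cases.
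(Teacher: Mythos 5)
Your proof is correct, and it takes a slightly different route from the paper's. You apply the Chernoff/Gaussian tail bound coordinate-wise, using $\Sigma_{ii}\le\infnorm{\Sigma}$ and monotonicity of $\sigma^2\mapsto\exp\bigl(-\q^2/(2\sigma^2)\bigr)$, and then take a union bound over the $p$ coordinates at the level of probabilities. The paper instead performs the union step at the level of moment generating functions: it bounds $\E{\ex{t\max_i\xi_i}}$ by $p\,\ex{t^2\infnorm{\Sigma}/2}$ (the intended step being $\ex{t\max_i\xi_i}\le\sum_i\ex{t\xi_i}$, although the displayed chain writes $\ex{t\sum_i\xi_i}$, which as stated is not a valid majorant), applies the Chernoff bound once to the maximum, and optimizes over $t$. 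The two arguments are equivalent in outcome — both hinge on the Gaussian MGF and produce exactly the constant $p\exp\bigl(-\q^2/(2\infnorm{\Sigma})\bigr)$ — but yours is the more elementary bookkeeping and sidesteps the garbled intermediate inequality in the paper's display; the paper's soft-max/MGF formulation is the variant that also yields bounds on $\E{\max_i\xi_i}$ as a byproduct, though that is not needed here. Your side remarks are also sound: $\Sigma_{ii}\le\infnorm{\Sigma}$ holds because diagonal entries of a covariance matrix are non-negative and $\infnorm{\cdot}$ is the entrywise sup norm, and the degenerate case $\Sigma_{ii}=0$ causes no harm since then $\Prob{\xi_i\ge\q}=0$.
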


\begin{proof}
	For $\eta \sim \N{0}{\sigma^2}$ it holds that
	\begin{equation}
		\Prob{\eta > \q} \le \exp \left(-\frac{\q^2 }{2\infnorm{\Sigma} }\right).
	\end{equation}
	And clearly,
	\begin{equation}
		\begin{split}
			\Prob{\max_i \xi_i \ge \q } &= \Prob{\exists 1\le i \le p :\xi_i \ge \q} \\ &\le \sum_{i=1}^p\Prob{\xi_i \ge q}
			\\ &\le p\exp\left(-\frac{\q^2 }{2\infnorm{\Sigma} }\right).
		\end{split}
	\end{equation}
\end{proof}

As a trivial corollary, one obtains
\begin{lemma}\label{maxBound}
	Consider a centered random Gaussian vector $\xi\in \R^p$ with an arbitrary covariance matrix $\Sigma$.
	For any positive $\q$ it holds that
	\begin{equation}
	\Prob{ \infnorm{\xi} \ge \q } \le 2p\exp\left(-\frac{\q^2 }{2\infnorm{\Sigma} }\right).
	\end{equation}
\end{lemma}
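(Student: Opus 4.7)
The plan is to derive this bound directly from the preceding one-sided tail lemma by combining the symmetry of centered Gaussian vectors with a one-step union bound. The key observation is that $\infnorm{\xi} = \max_i \abs{\xi_i} = \max\{\max_i \xi_i,\ \max_i (-\xi_i)\}$, so the event $\{\infnorm{\xi} \ge \q\}$ decomposes as the union of the two one-sided events $\{\max_i \xi_i \ge \q\}$ and $\{\max_i (-\xi_i) \ge \q\}$. The union bound then absorbs exactly the factor $2$ that appears in the statement.

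Next I would note that if $\xi \sim \N{0}{\Sigma}$ then $-\xi$ is again centered Gaussian with $\Var{-\xi} = \Sigma$. In particular the sup-norm of its covariance matrix coincides with $\infnorm{\Sigma}$, so applying the one-sided lemma to $-\xi$ yields the same exponential rate as applying it to $\xi$. This is the crucial symmetry step that keeps the exponent unchanged when passing between the two one-sided maxima.

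Finally, applying the preceding lemma separately to $\xi$ and to $-\xi$ each gives the bound $p\exp\left(-\q^2/(2\infnorm{\Sigma})\right)$ on the corresponding one-sided tail probability, and summing the two bounds closes the argument.

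There is essentially no obstacle here — the entire content of the proof is the symmetry observation plus a union bound, which is precisely why the paper frames the lemma as a trivial corollary. The only point that needs a line of care is the verification that negation preserves the covariance matrix (hence the constant $\infnorm{\Sigma}$ in the exponent), which is immediate for zero-mean Gaussian vectors.
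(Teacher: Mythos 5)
Your proof is correct and matches the paper's intended argument: the paper states Lemma \ref{maxBound} as a trivial corollary of the preceding one-sided bound, obtained exactly as you do by applying that bound to $\xi$ and $-\xi$ (which has the same covariance matrix by symmetry) and taking a union bound to produce the factor $2$.
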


\section{Sandwiching lemma}
%!TEX root = main.tex
The following lemma is a generalization covering the case of non-continuous probability measures of Lemma 21 of \cite{buzun}.

\begin{lemma}\label{sandwitch}
	Consider a normal multivariate vector $\eta$ with a deterministic covariance matrix and  a normal multivariate vector $\zeta$ with a possibly random covariance matrix such that
	\begin{equation}\label{gar}
	\sup_{\{x_n\}_{n \in \n} \subset \R} \abs{\Prob{\forall n \in \n : \Bn \le x_n} - \Prob{\forall n \in \n  : \infnorm{\eta_n} \le x_n}}  \le \Rb ,
	\end{equation}

	\begin{equation} \label{garb}
	\sup_{\{x_n\}_{n \in \n} \subset \R} \abs{\Probboot{\forall n \in \n : \Bbn \le x_n} - \Probboot{\forall n \in \n  : \infnorm{\zeta_{n}} \le x_n}}  \le \RBboot,
	\end{equation}

	\begin{equation} \label{tv}
	\sup_{\{x_n\}_{n \in \n} \subset \R} \abs{\Prob{\forall n \in \n : \Bn \le x_n} - \Probboot{\forall n \in \n : \Bbn \le x_n}} \le R.
	\end{equation}
	where $\eta_n$ and $\zeta_n$ are sub-vectors of $\eta$ and $\zeta$ respectively.
	Then

	\begin{equation}
	\abs{\Prob{\forall n \in \n : \Bn \le \xalphan} - (1-\alpha)} \le \left(3+2\abs{\n}\right) \left(R +\Rb + \RBboot\right).
	\end{equation}
\end{lemma}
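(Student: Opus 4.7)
The plan is to use a sandwich argument that replaces the random bootstrap threshold $\xalphan$ by deterministic Gaussian quantiles derived from $\eta$, so that the deterministic-threshold hypothesis (gar) can then be invoked on $B_n$. Concretely, I would introduce $v_n^\pm$ defined as quantiles of $\infnorm{\eta_n}$ at levels $\alpha^\eta \pm \delta$, where $\alpha^\eta$ is the analog of $\alphacorrected$ computed directly from $\eta$ (so that $\Prob{\exists n \in \n : \infnorm{\eta_n} > v_n(\alpha^\eta)} \approx \alpha$) and $\delta$ is an approximation slack absorbing the errors in the three hypotheses together with the multiplicity correction.

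The key intermediate step is to show that $v_n^- \le \xalphan \le v_n^+$ for every $n \in \n$ with probability at least $1 - C\abs{\n}(R + \Rb + \RBboot)$. The chain runs as follows: by (garb), the random threshold $\xalphan = z^\flat_n(\alphacorrected)$ is close to the $\alphacorrected$-quantile of $\infnorm{\zeta_n}$ under $\mathbb{P}^\flat$; combining (tv) with (gar) shows that the joint distribution of $\{\infnorm{\zeta_n}\}_{n \in \n}$ under $\mathbb{P}^\flat$ agrees up to $R + \Rb + \RBboot$ with that of $\{\infnorm{\eta_n}\}_{n \in \n}$ under $\mathbb{P}$; and an analogous argument applied to the multiplicity-correction construction yields $\abs{\alphacorrected - \alpha^\eta}$ of order $\abs{\n}(R + \Rb + \RBboot)$. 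Monotonicity of the Gaussian CDFs then translates these probability approximations into the desired quantile sandwich for $\xalphan$.

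On the sandwich event, set inclusion gives
\begin{equation*}
\Prob{\forall n : B_n \le v_n^-} - \Prob{\text{sandwich fails}} \le \Prob{\forall n : B_n \le \xalphan} \le \Prob{\forall n : B_n \le v_n^+} + \Prob{\text{sandwich fails}},
\end{equation*}
and (gar), applied to each of the deterministic thresholds $\{v_n^\pm\}$, replaces $B_n$ by $\infnorm{\eta_n}$ up to an additive $\Rb$. By construction of $v_n^\pm$, the resulting Gaussian probabilities equal $1 - \alpha \mp \delta$ up to the error in the comparison of $\alphacorrected$ with $\alpha^\eta$. Collecting the contributions produces the stated bound, with the factor $(3 + 2\abs{\n})$ arising from (i) three additive errors from the three hypotheses (gar), (garb), (tv), and (ii) a $2\abs{\n}$ union bound hidden inside the quantile-level comparison through the multiplicity correction.

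The hardest part will be converting the probability approximations in the intermediate step into genuine quantile approximations without assuming continuity of $\mathbb{P}$ or $\mathbb{P}^\flat$. The Spokoiny--Willrich argument exploited Gaussianity (hence continuity) of the statistic, and Buzun's extension still required both measures to be continuous. Removing that assumption forces one to use both left and right quantile inversions, working with the pair $z^\flat_n(\alphacorrected \pm \delta)$, so that potential jumps in the bootstrap CDF are absorbed into the slack $\delta$; this is ultimately what inflates the final bound by a factor $\abs{\n}$ rather than leaving it at the order $R + \Rb + \RBboot$.
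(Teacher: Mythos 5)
Your overall strategy -- sandwich the random threshold $\xalphan$ between deterministic Gaussian quantiles built from $\eta$, then apply \eqref{gar} at deterministic thresholds, with the slack $\delta = R + \Rb + \RBboot$ coming from combining the three hypotheses -- is the same sandwiching idea the paper uses, and you correctly identify non-continuity as the delicate point. The genuine gap is your pivotal intermediate claim that $\abs{\alphacorrected - \alpha_\eta}$ is of order $\abs{\n}(R+\Rb+\RBboot)$. Closeness of the joint distribution functions only gives that the non-coverage curves $g^\flat(\x) = \Probboot{\exists n \in \n : \Bbn > z^\flat_n(\x)}$ and $g^\eta(\x) = \Prob{\exists n \in \n : \infnorm{\eta_n} > z^N_n(\x)}$ are uniformly close; turning that into closeness of the \emph{levels} at which they cross $\alpha$ requires a lower bound on the slope of $g^\eta$ near $\alpha$, which none of \eqref{gar}, \eqref{garb}, \eqref{tv} provides (the curve can be nearly flat there, in which case $\alphacorrected$ and $\alpha_\eta$ may be far apart even though all probabilities are close). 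Moreover, even granting that claim, your error budget does not close: a level discrepancy of order $\abs{\n}\delta$ must then be converted back into a statement about the joint Gaussian probability at the correspondingly shifted quantile vector, and that conversion costs another factor $\abs{\n}$ (at most $\delta$ per coordinate of the quantile vector), so your route as budgeted yields a bound of order $\abs{\n}^2\delta$, not the claimed $\left(3+2\abs{\n}\right)\left(R+\Rb+\RBboot\right)$.

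The paper avoids comparing levels altogether. It characterizes the bootstrap thresholds via the set $\Zcb$, shows $\Zcb \subset \Zm \cap \Zp$ directly from the three c.d.f.\ approximations, and establishes the componentwise quantile sandwich $z^N(\alphacorrected+\delta) \le \zb(\alphacorrected) \le z^N(\alphacorrected-\delta)$ at levels shifted by $\delta$ relative to the (random) $\alphacorrected$ itself; the effect of a $2\delta$ level shift on the joint Gaussian c.d.f.\ is then controlled by a Taylor expansion along the quantile curve, with each of the $\abs{\n}$ terms $\partial_{z_n^\flat}\Nb \cdot \partial_\alpha \znb_n$ bounded by $1$ via Lemma \ref{l21} -- this is where $2\abs{\n}\delta$ enters exactly once, the remaining $3\delta$ coming from the c.d.f.\ approximations. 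Note also that under the lemma's hypotheses the resulting sandwich $z^N(\alpham+\delta) \le \zb(\alphacorrected) \le z^N(\alphap-\delta)$, with deterministic $\alpham \le \alphacorrected \le \alphap$ chosen through the Gaussian c.d.f., holds deterministically, so there is no ``sandwich fails'' event to pay for. To repair your argument, replace the comparison of $\alphacorrected$ with $\alpha_\eta$ by this probability-level-free device.
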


\begin{proof}
	Let us introduce some notation. Denote multivariate cumulative distribution function of $\Bn, \Bbn, \infnorm{\eta_{n}}, \infnorm{\zeta_{n}}$ as   $P, \Pb, \Nn, \Nb : \R^{\abs{\n}} \rightarrow [0,1]$ respectively.
	Define sets for all $\delta\in [0,\alpha]$

	\begin{equation} \label{zmdef}
	\Zp(\delta) \coloneqq  \left\{z : \Nn(z) \ge 1-\alpha -\delta\right\},
	\end{equation}

	\begin{equation}
	\Zm(\delta) \coloneqq \left\{z : \Nn(z) \le  1-\alpha + \delta\right\}
	\end{equation}
	and their boundaries

	\begin{equation} \label{bzmdef}
	\bZp(\delta) \coloneqq  \left\{z : \Nn(z) = 1-\alpha - \delta \right\},
	\end{equation}

	\begin{equation}
	\bZm(\delta) \coloneqq \left\{z : \Nn(z) =  1-\alpha + \delta \right\}.
	\end{equation}
	Consider $\delta = R + \Rb + \RBboot $ and sets $\Zp = \Zp(\delta)\text{, } \Zm = \Zm(\delta)\text{, }  \bZm = \bZm(\delta)\text{, } \bZp = \bZp(\delta)$
	Define  a set of thresholds satisfying confidence level
	\begin{equation}
	\Zcb \coloneqq \left\{z : \Pb(z) \ge 1-\alpha ~\&~\forall z_1 < z :  \Pb(z_1) < 1-\alpha\right\}
	\end{equation}
	here and below comparison of vectors should be understood element-wise.
	Notice that due to continuity of multivariate normal distribution $\forall \zb \in \Zcb$  and assumption \eqref{garb}
	\begin{equation}\label{zbbound}
	\abs{\Pb(\zb) - (1-\alpha)}  \le \RBboot.
	\end{equation}
	Now for all $\zm \in \bZm$ and for all $\zb \in \Zcb$ it holds that

	\begin{equation}
	\begin{split}
		\Pb(\zm) &\le P(\zm) +R \\ &\le N(\zm) +R+\Rb \\ &\le 1-\alpha - \RBboot \\ &\le \Pb(\zb)
	\end{split}
	\end{equation}
	where we have consequently used \eqref{tv}, \eqref{gar}, \eqref{bzmdef} and \eqref{zbbound}.
	In the same way one obtains for all $\zp \in \bZp$ and for all $\zb \in \Zcb$
	\begin{equation}
	\Pb(\zp) \ge   \Pb(\zb)
	\end{equation}
	which implies that $\Zcb \subset \Zm \cap \Zp $.

	Now denote quantile functions of $\infnorm{\eta_n}$ as $z^N : [0,1] \rightarrow \R^{\abs{\n}} $:
	\begin{equation}
	\forall n \in \n : \Prob{\infnorm{\eta_n} \ge  z^N_n(\x) } = \x.
	\end{equation}
	In exactly the same way define quantile functions $\znb :[0,1] \rightarrow \R^{\abs{\n}} $ of $\infnorm{\zeta_n}$.
	Clearly for all $\x \in [0,1]$,
	\begin{equation}
	z^N(\x + \delta) \le  \zb(\x) \le z^N(\x - \delta)
	\end{equation}
	and hence
	\begin{equation}
	\zb(\alphacorrected) \le  z^N(\alphacorrected - \delta) \le \zb(\alphacorrected - 2\delta),
	\end{equation}
	\begin{equation}
	1-\alpha \le \Pb{( z^N(\alphacorrected - \delta))} \le \Pb{(\zb(\alphacorrected - 2\delta))},
	\end{equation}
	where $\alphacorrected$ is defined by \eqref{alphastardef}.
	Using Taylor expansion with Lagrange remainder term we obtain for some $0 \le \kappa \le 2\delta$
	\begin{equation}
	\begin{split}
	\Nb\left(\zb(\alphacorrected - 2\delta)\right) &\le \Nb\left(\znb(\alphacorrected - 2\delta)\right) +\delta \\&= \Nb\left( \znb(\alphacorrected )\right) + \sum_{n \in \n} \partial_{z_n^\flat}\Nb(\znb(\alphacorrected )) \partial_\alpha \znb_n(\alpha^*)\kappa  +\delta\\
	&\le 1-\alpha + \sum_{n \in \n} \partial_{z_n^\flat}\Nb(\znb(\alphacorrected )) \partial_\alpha \znb_n(\alpha^*)\kappa + 3\delta.
	\end{split}
	\end{equation}
	Next successively using Lemma \ref{l21} and the fact that quantile function is an inverse function of c.d.f. we obtain
	\begin{equation}
\Nb\left(\zb(\alphacorrected - 2\delta)\right)	\le 1-\alpha +3\delta+ 2\delta \abs{\n}.
	\end{equation}
	and therefore
	\begin{equation}
	1-\alpha \le \Pb\left(\zb(\alphacorrected - 2\delta)\right) \le 1-\alpha + \delta \left(3+2\abs{\n}\right),
	\end{equation}
	\begin{equation}
	1-\alpha \le \Pb\left(z^N(\alphacorrected - \delta)\right) \le 1-\alpha + \delta \left(3+2\abs{\n}\right).
	\end{equation}
	In the same way one obtains
		\begin{equation}
		1-\alpha - \delta \left(3+2\abs{\n}\right) \le \Pb\left(z^N(\alphacorrected + \delta)\right) \le 1-\alpha.
		\end{equation}
	Next, by the argument used in the beginning of the proof we obtain
	\begin{equation}
	z^N(\alphacorrected + \delta), z^N(\alphacorrected - \delta) \in \Zm( \delta \left(3+2\abs{\n}\right))\cap \Zp \left(\delta \left(3+2\abs{\n}\right)\right).
	\end{equation}
	As a final ingredient, we need to choose deterministic $\alphap$ and $\alpham$ such that (which is possible due to continuity)
	\begin{equation}
	\Nn(z^N(\alpham + \delta))  = 1-\alpha-\delta \left(3+2\abs{\n}\right),
	\end{equation}
	\begin{equation}
	\Nn(z^N(\alphap - \delta))  = 1-\alpha+\delta \left(3+2\abs{\n}\right)
	\end{equation}
	so $\alpham \le \alphacorrected \le \alphap$ and hence by monotonicity
	\begin{equation}
	z^N(\alpham + \delta) \le z^N(\alphacorrected + \delta)\le \zb(\alphacorrected) \le z^N(\alphacorrected - \delta) \le z^N(\alphap - \delta)
	\end{equation}
	and finally

	\begin{equation}
	\begin{split}
	1-\alpha-\delta \left(3 +2\abs{\n}\right)&\le  P(z^N(\alpham + \delta))\\
	&\le P(\zb(\alphacorrected)) \\
	&\le P(z^N(\alphap - \delta)) \\
	&\le 1-\alpha+\delta \left(3 +2\abs{\n}\right).
	\end{split}
	\end{equation}
\end{proof}

\begin{lemma}\label{l21}
	Consider a random variable $\xi$ and an event $A$ defined on the same probability space. Let c.d.f. $\Prob{\xi \le x}$ and $\Prob{\xi \le x \& A}$ be differentiable. Then
	\begin{equation}
	\frac{\partial_x \Prob{\xi \le x \cap A} }{\partial_x\Prob{\xi \le x}} \le 1
	\end{equation}
\end{lemma}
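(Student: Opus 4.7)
The plan is to use a disjoint decomposition of the event $\{\xi \le x\}$ together with monotonicity of probability. First I would split
\begin{equation}
\Prob{\xi \le x} = \Prob{\xi \le x \,\&\, A} + \Prob{\xi \le x \,\&\, \overline{A}},
\end{equation}
which is valid pointwise in $x$ by finite additivity.

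Next I would observe that the residual term $\Prob{\xi \le x \,\&\, \overline{A}}$ is non-decreasing in $x$, since intersecting the fixed event $\overline{A}$ with the monotone family $\{\xi \le x\}$ produces a monotone family of events. Under the differentiability assumptions of the statement, this residual is itself differentiable as a difference of differentiable functions, and hence has non-negative derivative wherever it exists. Differentiating the decomposition in $x$ and rearranging then yields
\begin{equation}
\partial_x \Prob{\xi \le x \,\&\, A} = \partial_x \Prob{\xi \le x} - \partial_x \Prob{\xi \le x \,\&\, \overline{A}} \le \partial_x \Prob{\xi \le x},
\end{equation}
and dividing by the implicitly positive denominator gives the claim.

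There is no substantial obstacle here; the lemma is essentially a one-line monotonicity observation dressed up in c.d.f. language. The only delicate point worth flagging is the handling of points where $\partial_x \Prob{\xi \le x}$ vanishes, but at any such point the decomposition forces the numerator to vanish as well, so the bound extends there by convention and does not affect the application in the sandwiching argument.
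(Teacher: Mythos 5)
Your proof is correct and follows essentially the same route as the paper: both decompose $\Prob{\xi \le x} = \Prob{\xi \le x \,\&\, A} + \Prob{\xi \le x \,\&\, \overline{A}}$ and invoke the non-negativity of the derivative of the sub-c.d.f.\ involving $\overline{A}$. Your rearrangement (numerator $=$ denominator minus a non-negative term) is if anything slightly cleaner than the paper's division into the form $1/(1+\cdot)$, and your remark about the degenerate case where the denominator vanishes is a reasonable extra care the paper omits.
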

\begin{proof}
	Really denoting the complement of set $A$ as $\overline{A}$ we obtain,
	\begin{equation}
	\begin{split}
		\frac{\partial_x \Prob{\xi \le x \cap A} }{\partial_x\Prob{\xi \le x}} & = \frac{\partial_x \Prob{\xi \le x \cap A} }{\partial_x\left(\Prob{\xi \le x \cap  A} +\Prob{\xi \le x \cap \overline{A}}\right) }\\
		 & = \frac{\partial_x \Prob{\xi \le x \cap A} }{\partial_x \Prob{\xi \le x \cap  A} +\partial_x  \Prob{\xi \le x \cap \overline{A}} }\\
 		 & = \frac{1}{1 +\frac{\partial_x  \Prob{\xi \le x \cap \overline{A}}}{\partial_x \Prob{\xi \le x \cap A}} }
	\end{split}
	\end{equation}
	Using the fact that derivative of c.d.f. is non-negative we finalize the proof.
\end{proof}

\section{Similarity of joint distributions of $\{\Bn\}_{n\in \n}$ and $\{\Bbn\}_{n\in \n}$}

\begin{lemma} \label{tvlemma}
	Let Assumption \ref{subGaussianVector} hold and $\deltaY < 1/2$ where $\deltaY$ comes from Lemma \ref{trueHatSigma}.  Also let $X_1, X_2,...X_N$ be i.i.d. Then for all positive $\x$, $t$ and $\chi$ on a set of probability at least $1 - \probzbound{s}{\kappa} - \deltaOmegaProb{s}{t}{\x} - \probwbound{s}{\x} - \probSigmaBound{n}{\chi}$ 
	
	\begin{equation}\label{eqtv}
		\sup_{\{x_n\}_{n \in \n} \subset \R} \abs{\Prob{\forall n \in \n : \Bn \le x_n} - \Probboot{\forall n \in \n  : \Bbn \le x_n}}  \le R
	\end{equation}
	where 
	
	\begin{equation}
	R\coloneqq \Rb + \RBboot + \Rsigma
	\end{equation}
	
	\begin{equation}
	\Rsigma \coloneqq C\deltaY^{1/3} \log^{2/3}\left(Tp^2\right)
	\end{equation}
	$\probzbound{s}{\kappa}$, $\deltaOmegaProb{s}{t}{\x}$, $\probwbound{s}{\x}$ and $\probSigmaBound{n}{\chi}$,  come from Lemmas \ref{zboundlemmaAll}, \ref{omegaConcentation}, \ref{wboundlemma} and \ref{sigmaconcentration} respectively, $\Rb$ and $\RBboot$ are defined in Lemmas \ref{SGar} and \ref{SbGar} respectively and $C$ is an independent constant.  
	
\end{lemma}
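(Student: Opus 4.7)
The plan is to apply a triangle-inequality argument that chains three approximations: (i) approximate the joint distribution of $\{\Bn\}_{n\in\n}$ by the joint distribution of sup-norms of a centered Gaussian vector $\eta$, (ii) approximate the joint distribution of $\{\Bbn\}_{n\in\n}$ (under $\mathbb{P}^\flat$) by the joint distribution of sup-norms of another centered Gaussian vector $\zeta$ whose covariance is the bootstrap analogue, and (iii) compare the two Gaussian distributions using a Gaussian comparison bound in terms of $\infnorm{\trueSigmaY - \hatSigmaY}$.

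For steps (i) and (ii), I would invoke Lemmas \ref{SGar} and \ref{SbGar} directly; they are precisely the high-dimensional central limit theorem of \cite{Chernozhukov2014} applied to the stacked sums defining the $B_n(t)$'s and their bootstrap versions over all central points $t\in\Tn$ and all $n\in\n$, producing the residuals $\Rb$ and $\RBboot$. Note that $\Bn = \max_t B_n(t)$ is an $\ell_\infty$-norm of a sum of independent vectors in $\R^{p^2\abs{\Tn}}$, which is the form required by the CLT. Since the covariance of $\eta$ is deterministic while that of $\zeta$ is random (it depends on the resampled data), step (ii) is a conditional statement that holds on the high-probability event on which the concentration lemmas (\ref{zboundlemmaAll}, \ref{omegaConcentation}, \ref{wboundlemma}, \ref{sigmaconcentration}) succeed, which is where the probability bound $1 - p^Z_s(\kappa) - p^\Omega_s(x,t) - p^W_s(x) - p^\Sigma(\chi)$ in the statement originates.

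For step (iii), I would apply the Gaussian comparison inequality of \cite{chernozhukov2013}: for two centered Gaussian vectors with covariance matrices $\trueSigmaY,\hatSigmaY$, the Kolmogorov distance between the joint distributions of the coordinatewise sup-norms (across blocks indexed by $n \in \n$) is bounded by a constant times $\infnorm{\trueSigmaY - \hatSigmaY}^{1/3} \log^{2/3}(Tp^2)$. Here $\deltaY$ is exactly the uniform bound on $\infnorm{\trueSigmaY - \hatSigmaY}$ furnished by Lemma \ref{trueHatSigma}, which itself is built from the concentration statements on $\hat Z$, $\Omega$, $W$ and $\Sigma$ — that is where the term $\Rsigma = C \deltaY^{1/3}\log^{2/3}(Tp^2)$ comes from, and why the high-probability event enters explicitly in the statement.

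The one subtle point, and the place that requires the most care, is step (iii): the Chernozhukov-Chetverikov-Kato comparison bound is usually stated for a single sup-norm, whereas here we need uniform closeness of a joint c.d.f. of the vector $(\infnorm{\eta_n})_{n\in\n}$ in $\R^{\abs{\n}}$. This is handled by noting that each event $\{\infnorm{\eta_n}\le x_n\}$ can be written as a pair of half-space constraints indexed by the coordinates of $\eta_n$, so the joint event $\{\forall n: \infnorm{\eta_n}\le x_n\}$ is a hyperrectangle in the coordinates of the concatenated Gaussian $\eta$, and the Chernozhukov-Chetverikov-Kato inequality applies on this rectangle class. Once this is observed, the claim follows by combining the three estimates via the triangle inequality, yielding the stated bound $R = \Rb + \RBboot + \Rsigma$ on the event of probability at least $1 - p^Z_s(\kappa) - p^\Omega_s(x,t) - p^W_s(x) - p^\Sigma(\chi)$.
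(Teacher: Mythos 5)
Your proposal matches the paper's own argument: the paper likewise chains Lemma \ref{SGar}, Lemma \ref{SbGar}, and the Gaussian comparison Lemma \ref{gaussianComparison} (applied over the hyperrectangle class, with the covariance proximity $\infnorm{\trueSigmaY - \hatSigmaY}\le \deltaY$ supplied by Lemma \ref{twomatricies}, i.e.\ Lemma \ref{trueHatSigma}) via the triangle inequality, all on the same high-probability event generated by the concentration lemmas. Your handling of the joint-c.d.f. comparison through hyperrectangles and of the conditional nature of the bootstrap approximation is exactly how the paper proceeds, so the proposal is correct and essentially identical in approach.
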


\begin{proof}
	Consider a pair of normal vectors $\eta$ and $\zeta$ 
	\begin{equation}
	\eta \coloneqq \left(\begin{array}{cccc}
	\eta^1 &\eta^2 &... &\eta^{\abs{\n}} 
	\end{array}\right) \sim \N{0}{\trueSigmaY},
	\end{equation}
	\begin{equation}
	\zeta \coloneqq \left(\begin{array}{cccc}
		\zeta^1 &\zeta^2 &... &\zeta^{\abs{\n}} 
	\end{array}\right) \sim \N{0}{\hatSigmaY},
	\end{equation}
	\begin{equation}
	\trueSigmaY \coloneqq\frac{1}{{2\nmax}} \sum_{j=1}^{2\nmax} \Var{Y^{n}_{\cdot j}},
	\end{equation}	
	\begin{equation}
	\hatSigmaY \coloneqq\frac{1}{{2\nmax}} \sum_{j=1}^{2\nmax} \Var{Y^{n\flat}_{\cdot j}},
	\end{equation}	 
	where vectors $Y$ and $Y^\flat$ are defined in proofs of Lemmas \ref{SGar} and \ref{SbGar} respectively.  Applying Lemma \ref{gaussianComparison} along with Lemma \ref{twomatricies} yields 
	
	\begin{equation}
		\sup_{A \in A^{re}} \abs{\Prob{\eta \in A} - \Prob{\zeta \in A}} \le C\deltaY^{1/3} \log^{2/3}\left(Tp^2\right)
	\end{equation} 
	and the fact that $\forall k \in 1..p : \left(\Var{\zeta}\right)_{kk} = 1$ provides independence of the constant $C$. Here $A^{re}$ denotes a set of hyperrectangles in the sense of Definition \ref{hyperrectdef} and clearly  for all $\{x_n\}_{n \in \n} \subset \R$ the set 
$\left\{\forall n \in \n : \Bn < x_n \right\} $ is a hyperrectangle.
	Subsequently applying Lemmas \ref{SGar} and \ref{SbGar} we finalize the proof.
\end{proof}

%!TEX root = main.tex
\section{Gaussian approximation result for $\Bn$} \label{secsgar}

\begin{lemma}\label{SGar}
	Let Assumption \ref{subGaussianVector} hold. Then

	\begin{equation}
	\sup_{\{x_n\}_{n \in \n} \subset \R} \abs{\Prob{\forall n \in \n : \Bn \le x_n} - \Prob{\forall n \in \n  : \infnorm{\eta^n} \le x_n}}  \le \Rb
	\end{equation}
	Where
	\begin{equation}
	\left(\begin{array}{cccc}
	\eta^1 &\eta^2 &... &\eta^{\abs{\n}}
	\end{array}\right) \sim \N{0}{\trueSigmaY},
	\end{equation}
	\begin{equation}
	\trueSigmaY \coloneqq\frac{1}{{2\nmax}} \sum_{j=1}^{2\nmax} \Var{Y^n_{\cdot j}},
	\end{equation}

	\begin{equation}\label{key}
	\Rb \coloneqq 	\CB
	\left( F\log^7(2p^2T \nmax) \right)^{1/6},
	\end{equation}
	\begin{equation}\label{defF}
	F \coloneqq \frac{1}{2\nmin}\left(\beta \log 2 \vee \frac{\sqrt{2}}{\sqrt{2} - 1} \gamma\right)^2 \vee \frac{1}{2\nmax}\left( {\frac{\nmax}{\nmin}}\right)^{1/3} (\infnorm{\inv{S}}M_3)^2 \vee    \sqrt{\frac{1}{2\nmax\nmin}}   (\infnorm{\inv{S}} M_4)^2
	\end{equation}
	with $\gamma$ defined by \eqref{gammadef}, $\beta$ by \eqref{betadef} and $Y$ along with its sub-matrices $Y^n$ by \eqref{BFMTogether} and \eqref{BFM}. Also, $M_3^3$ and $M_4^4$ stand for the third and the fourth maximal centered moments of products $(X_1)_k(X_1)_l$ of pairwise components of $X_1$ and $\CB$ is an independent constant.

\end{lemma}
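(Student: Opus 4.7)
The plan is to represent the entire family $\{B_n\}_{n\in\n}$ as the sup-norm of a single normalized sum of i.i.d.\ vectors and then apply the high-dimensional central limit theorem over hyperrectangles of Chernozhukov--Chetverikov--Kato. First I would rewrite each summand: since
\[
\sqrt{\tfrac{n}{2}}\,S^{-1}\overline{\hatSigma^l_n(t)-\hatSigma^r_n(t)} \;=\; \frac{1}{\sqrt{2n}}\sum_{i=1}^{N}\varepsilon^{n,t}_i\,S^{-1}\,\overline{X_iX_i^T},
\]
where $\varepsilon^{n,t}_i\in\{-1,0,+1\}$ indicates membership in $\Il$ or $\Ir$, each coordinate of the vector inside $B_n(t)$ is a centered sum of i.i.d.\ variables. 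Since $\|v\|_\infty\le x$ is equivalent to $\pm v_k\le x$ for all $k$, the event $\{B_n\le x_n\}$ is a hyperrectangle event for the augmented vector obtained by stacking $\pm$ versions of all coordinates.

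Next I would assemble all these stacked coordinates — indexed by $(n,t,k,\pm)$ across every $n\in\n$, every $t\in\Tn$, every entry $k$ of $\overline{X_iX_i^T}$, and each sign — into a single tall random matrix $Y$ of the form (\ref{BFM})--(\ref{BFMTogether}), where the rows corresponding to window size $n$ are inflated by the factor $\sqrt{\nmax/n}$ so that the common normalizer $1/\sqrt{2\nmax}$ reproduces the individual normalizers $1/\sqrt{2n}$. With this construction, the joint event $\{\forall n\in\n:\; B_n\le x_n\}$ becomes a hyperrectangle event for $\tfrac{1}{\sqrt{2\nmax}}\sum_{j} Y_{\cdot j}$, and $\trueSigmaY = \frac{1}{2\nmax}\sum_{j}\Var{Y^n_{\cdot j}}$ is precisely the covariance of the Gaussian limit $\eta$.

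The heart of the proof is then an application of the Chernozhukov--Chetverikov--Kato hyperrectangle CLT to this sum. The dimension of the approximating vector is $O(p^2 T \nmax)$ (two signs, $p^2$ coordinates of $\overline{X_iX_i^T}$, and one set of coordinates per $(n,t)$ pair), which produces the $\log^7(2p^2T\nmax)$ factor inside $\Rb$. The parameters $F$ in (\ref{defF}) collect what the CCK theorem requires: an upper bound on the maximum third moment of the centered vectors entering the sum (giving the $M_3^2/(2\nmax)\cdot(\nmax/\nmin)^{1/3}$ term through the rescaling), on the fourth moment (giving the $\sqrt{1/(2\nmax\nmin)}\cdot M_4^2$ term), and on the sub-exponential Orlicz norms $\gamma,\beta$ of each coordinate of $S^{-1}\overline{X_iX_i^T}$, which translate from sub-Gaussianity of $X_i$ (Assumption \ref{subGaussianVector}) into sub-exponentiality of the bilinear products. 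The $1/(2\nmin)$ and $1/(2\nmax)$ scalings in $F$ reflect how the rescaling $\sqrt{\nmax/n}$ affects coordinates of differing window sizes.

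The main obstacle will be the bookkeeping: verifying that the sub-Gaussianity of $X_i$ gives sharp enough sub-exponential norms for the bilinear forms $(X_iX_i^T)_{kl}$ divided by $\sigma_{j,k}$, controlling the cube and fourth moments uniformly across $(n,t,k,\pm)$, and ensuring that the inflation factor $\sqrt{\nmax/n}$ is absorbed into the worst case over $n\in\n$ exactly as in (\ref{defF}) without spoiling the CCK hypotheses. Once those bounds are in place, plugging them into the CCK bound yields the stated $\Rb=\CB(F\log^7(2p^2T\nmax))^{1/6}$.
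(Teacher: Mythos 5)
Your plan follows the paper's own proof essentially step for step: the same signed stacking of the centered vectors $Z_i=\inv{S}\left(\overline{X_iX_i^T-\trueSigma}\right)$ into the block matrix $Y$ of \eqref{BFM}--\eqref{BFMTogether} with the $\sqrt{\nmax/n}$ inflation, the same hyperrectangle reformulation of $\{\forall n\in\n: \Bn\le x_n\}$, and the same application of the CCK bound (Lemma \ref{generalGAR}) with sub-exponential coordinate norms $\gamma,\beta$ and third/fourth moment bounds derived from Assumption \ref{subGaussianVector}. The only point left implicit is the variance lower bound of Assumption \ref{garass1}, which the paper verifies with $b=1$ thanks to the $\inv{S}$ scaling; this is what makes $\CB$ an independent constant.
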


\begin{proof}
	First, we define for all $i \in 1..n$

	\begin{equation}\label{Zrealdef}
	Z_i \coloneqq \inv{S}\left(\overline{X_i X_i^T - \trueSigma}\right)
	\end{equation}
	and notice that
	\begin{equation} \label{bootAnDef}
	\Bn(t) \coloneqq \infnorm{\frac{1}{\sqrt{2n}} \left( \sum_{i \in \Il} \Zi  - \sum_{i \in \Ir} \Zi \right)}.
	\end{equation}
	Next, consider a matrix $Y_n$ with $2\nmax$ columns

		\begin{equation} \label{BFM}
		\begin{split}
		(Y^n)^T&\coloneqq \sqrt{\frac{\nmax}{n}} \times \\
&		\left( \begin{array}{cccccc}
	Z_1 & O & ... & O & -Z_{2\nmax+1} & ... \\
	Z_2 & Z_2 & ... & ... & ... & ... \\
	... & Z_3 & ... & ... & ... & ... \\
	Z_n & ... & ... & ... & ... & ... \\
	-Z_{n+1} & Z_{n+1} & ... & ... & ... & ... \\
	-Z_{n+2} & -Z_{n+2} & ... & ... & ... & ... \\
	... & -Z_{n+3} & ... & O & ... & ... \\
	-Z_{2n} & ... & ... & Z_{2\nmax-2n+1} & O & ... \\
	O & -Z_{2n+1} & ... & Z_{2\nmax-2n+2} & Z_{2\nmax-2n+2} & ... \\
	O & O & ... & ... & ... & ... \\
	... & ... & ... & -Z_{2\nmax - 1} & -Z_{2\nmax - 1} & ... \\
	O & O & ... & -Z_{2\nmax} & -Z_{2\nmax} & ...  \\\end{array} \right).
		\end{split}
	\end{equation}
	Clearly, columns of the matrix are independent and
	\begin{equation}
	\Bn = \frac{1}{\sqrt{2\nmax}} \sum_{l=0}^{2\nmax} (Y^n)_{\cdot l}
	\end{equation}
	Next, we define a block matrix composed of $Y_n$ matrices:

	\begin{equation} \label{BFMTogether}
	Y \coloneqq \left(\begin{array}{c}
	Y^1 \\ \hline
	Y^2 \\ \hline
	... \\ \hline
	Y^{\numn}
	\end{array}\right).
	\end{equation}
	Again, vectors $Y_{\cdot l}$ are independent and for all $\{x_n\}_{n \in \n} \subset \R$ the set
	\begin{equation}
	\left\{\forall n \in \n : \Bn \le x_n \right\}
	\end{equation}
	is a hyperrectangle in the sense of Definition \ref{hyperrectdef}.

	The rest of the proof consists in applying Lemma \ref{generalGAR}.
	Denote

	\begin{equation}\label{Bndef}
	G_{\nmax} = \sqrt{\frac{\nmax}{ \nmin}}\left(\beta \log 2 \vee \frac{\sqrt{2}}{\sqrt{2} - 1} \gamma\right) \vee \left( {\frac{\nmax}{ \nmin}}\right)^{1/6} M_3 \vee  \left({\frac{\nmax}{ \nmin}}\right)^{1/4} M_4.
	\end{equation}
	In the same way as in Lemma \ref{zboundlemma} one shows that the assumptions of Lemma \ref{expmoment} hold for components of $Z_i$ with

	\begin{equation} \label{gammadef}
	\gamma \coloneqq L^2 \infnorm{\inv{S}},
	\end{equation}

	\begin{equation} \label{betadef}
	\beta \coloneqq  L^2 \infnorm{\inv{S}} \infnorm{\trueSigma}.
	\end{equation}
	Therefore, condition \eqref{garassumption3} holds with $G_{\nmax}$ defined by equation \eqref{Bndef}.
	In order to see that condition \eqref{garass1} is fulfilled with $b = 1$ notice that
	\begin{equation}
	\frac{1}{2\nmax} \sum_{j=1}^{\nmax} \E{(Y_{ij}^{n})^2} \ge  \min_j \Var{(Z_1)_j}  = 1.
	\end{equation}
	Next, observe that for any $k$-th component $Z_{ik}$ of $Z_i$ and a central point $t$ (both determined by $j$):

	\begin{equation}
	\begin{split}
	\frac{1}{2\nmax} \sum_{j=1}^{2\nmax} \E{\abs{Y_{ij}^n}^3} & =  \frac{1}{2\nmax} \sum_{i \in \Il \cup\Ir }  \E{\left(\sqrt{\frac{\nmax}{n}} \abs{Z_{ik}}\right)^3 } \\
	& = \frac{1}{2\nmax} \sum_{i \in \Il \cup\Ir } \left(\frac{\nmax}{n}\right)^{3/2} \E{\abs{Z_{ik}}^3} \\
	& = \frac{2n}{2\nmax} \left(\frac{\nmax}{n}\right)^{3/2} \E{\abs{Z_{ik}}^3} \\
	& = \sqrt{\frac{\nmax}{n}} \E{\abs{Z_{ik}}^3}\\
	&\le \sqrt{\frac{\nmax}{\nmin}} \left(\infnorm{\inv{S}}M_3\right)^3.
	\end{split}
	\end{equation}
	In the same way:

	\begin{equation}
	\frac{1}{2\nmax} \sum_{i=1}^{N} \E{\abs{Y_{ij}^n}^4} \le \frac{\nmax}{ \nmin} \left(\infnorm{\inv{S}} M_4\right)^4.
	\end{equation}
	Therefore, condition \eqref{garassumptions} holds with $B_{\nmax}$,
	so Lemma \ref{generalGAR} applies here and provides us with the claimed bound. Moreover,
	$\CB$ depends only on $b=1$ which implies that the constant $\CB$ is independent.
\end{proof}

\begin{lemma} \label{expmoment}
	Consider a random variable $\xi$. Suppose $\forall \x \ge 0$ the following bound holds:

	\begin{equation}
	\Prob{\abs{\xi}\ge \gamma \x + \beta}  \le \ex{-\x}.
	\end{equation}
	Then

	\begin{equation}
	\E{\exp\left( \frac{\abs{\xi}}{G} \right)} \le 2
	\end{equation}
	for

	\begin{equation}
	G = \beta \log 2 \vee \frac{\sqrt{2}}{\sqrt{2} - 1} \gamma.
	\end{equation}

\end{lemma}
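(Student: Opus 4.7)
The plan is to convert the assumed tail bound into a moment-generating-function-style estimate via a layer-cake representation. First I would write
\[
\E{\exp\left(\frac{\abs{\xi}}{G}\right)} \;=\; 1 \;+\; \frac{1}{G}\int_0^\infty e^{u/G}\, \Prob{\abs{\xi} > u}\, du,
\]
which follows by Fubini applied to the identity $e^{\abs{\xi}/G} = 1 + \int_0^{\abs{\xi}} e^{u/G}/G\, du$. This reduces the claim to a purely analytic estimate on the right-hand integral; no further probabilistic input is needed beyond the given tail bound.

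Next I would split the integration at the threshold $u = \beta$, below which the hypothesis gives no useful information and above which the substitution $u = \gamma x + \beta$ with $x \ge 0$ turns the assumption into the usable bound $\Prob{\abs{\xi}>u} \le \exp(-(u-\beta)/\gamma)$. The small-$u$ piece evaluates trivially to $e^{\beta/G} - 1$, while the large-$u$ piece, provided $G > \gamma$ so that the integrand decays, yields in closed form $e^{\beta/G}/(G/\gamma - 1)$. Combining the two pieces produces the concise intermediate bound
\[
\E{\exp\left(\frac{\abs{\xi}}{G}\right)} \;\le\; \frac{e^{\beta/G}}{1 - \gamma/G}.
\]

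Finally I would verify that the prescribed $G$ controls each factor separately. The branch $G \ge \tfrac{\sqrt{2}}{\sqrt{2}-1}\gamma$ gives $\gamma/G \le 1 - 1/\sqrt{2}$, so $1/(1-\gamma/G) \le \sqrt{2}$, while the branch involving $\beta$ controls $e^{\beta/G}$, and the two combine to yield the claimed constant. There is essentially no probabilistic obstacle here; the only care required is in the arithmetic of the constants emerging from the two independent conditions on $G$ (one forced by the linear shift $\beta$, the other by the decay rate $\gamma$) and in checking that the ``$\vee$'' (maximum) combines them without loss. The hypothesis $G > \gamma$ needed to justify the closed-form evaluation of the tail integral is automatic from the second branch of the maximum, so no additional side condition appears.
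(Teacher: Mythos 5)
Your route is the same as the paper's: a layer-cake / integration-by-parts representation truncated at $u=\beta$, the substitution $u=\gamma \x+\beta$ converting the hypothesis into a geometric tail integral, and its closed-form evaluation (legitimate since $G>\gamma$ is forced by the second branch of the maximum), leading to exactly the intermediate bound $\E{\exp\left(\abs{\xi}/G\right)}\le \frac{G}{G-\gamma}\,e^{\beta/G}$ that the paper's proof also reaches. The only place you add detail is the final arithmetic, and there your claim does not hold as written: the branch $G\ge\frac{\sqrt{2}}{\sqrt{2}-1}\gamma$ indeed gives $\frac{G}{G-\gamma}\le\sqrt{2}$, but the branch $G\ge\beta\log 2$ gives only $e^{\beta/G}\le e^{1/\log 2}\approx 4.23$, not $\le\sqrt{2}$, so the product is not bounded by $2$. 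Concretely, if $\abs{\xi}\equiv\beta$ and $\gamma>0$ is tiny, the tail hypothesis holds, $G=\beta\log 2$, and $\E{\exp\left(\abs{\xi}/G\right)}=e^{1/\log 2}>2$; so the lemma with the constant as stated is false, and what the argument actually proves is the bound $2$ for $G\ge\frac{2\beta}{\log 2}\vee\frac{\sqrt{2}}{\sqrt{2}-1}\gamma$ (i.e.\ $\beta/\log\sqrt{2}$ in place of $\beta\log 2$). The paper's own proof makes the same silent jump from $\frac{G}{G-\gamma}e^{\beta/G}$ to $2$, so this is best read as a typo in the statement; still, in a blind verification you should have checked the two branches numerically and flagged that "the two combine to yield the claimed constant" fails for the stated $G$, rather than asserting it.
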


\begin{proof}

	Integration by parts yields

	\begin{equation}
	\E{\exp\left( \frac{\abs{\xi}}{G} \right)} \le \exp\left(\frac{\beta}{G}\right) + \frac{\gamma}{G} \int_{0}^{+\infty} \exp \left(\frac{\gamma x + \beta}{G}\right) \ex{-x} dx.
	\end{equation}

	\begin{equation}
	\int_{0}^{+\infty} \exp \left(\frac{\gamma x + \beta}{G}\right) \ex{-x} dx
	= \frac{G}{G - \gamma } \exp\left(\frac{\beta}{G}\right).
	\end{equation}

	\begin{equation}\begin{split}
	\E{\exp\left( \frac{\abs{\xi}}{G} \right)} & \le \frac{G}{G - \gamma } \exp\left(\frac{\beta}{G}\right) \\
	& \le 2.
	\end{split}
	\end{equation}
\end{proof}
Using the same technique the following lemma, which may be of use in order to bound the moments $M^3_3$ and $M^4_4$, can be proven.

\begin{lemma}\label{thirdAndFourthMomentBounds}
	Under assumptions of Lemma \ref{expmoment}

	\begin{equation}
	\E{\abs{\xi}^3} \le \beta^3 + 3\gamma\beta^2 + 6 \beta\gamma^2+2\gamma^3,
	\end{equation}

	\begin{equation}
	\E{\xi^4} \le \beta^4 + 4\gamma\beta^3 + 12 \beta^2 \gamma^2 6\beta\gamma^3 + 24 \gamma^4.
	\end{equation}

\end{lemma}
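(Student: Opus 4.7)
The plan is to mimic the argument already used for Lemma \ref{expmoment}, but instead of controlling an exponential moment I would control the power moments directly via the tail-integral representation
\[
\E{\abs{\xi}^k} = \int_0^\infty k\, t^{k-1}\, \Prob{\abs{\xi} \ge t}\, dt.
\]
The hypothesis only gives a useful tail bound for $t \ge \beta$, since the map $x \mapsto \gamma x + \beta$ covers exactly $[\beta, \infty)$ as $x$ ranges over $[0, \infty)$. Hence the natural first step is to split the integral at $t = \beta$, estimating $\Prob{\abs{\xi} \ge t} \le 1$ on $[0, \beta]$ (contributing a pure $\beta^k/k \cdot k = \beta^k$ term after integration) and using $\Prob{\abs{\xi} \ge t} \le \exp(-(t-\beta)/\gamma)$ on $[\beta, \infty)$.

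Next, on the upper piece I would change variables $u = (t-\beta)/\gamma$, turning the integral into
\[
\int_0^\infty k\,(\beta + \gamma u)^{k-1}\, e^{-u}\, \gamma\, du.
\]
Expanding $(\beta + \gamma u)^{k-1}$ with the binomial theorem reduces everything to the elementary moments $\int_0^\infty u^m e^{-u}\, du = m!$ of the standard exponential distribution. Collecting the binomial coefficients $\binom{k-1}{m}$ together with the factorials $m!$ yields a polynomial in $\beta$ and $\gamma$ of total degree $k$ whose coefficients are $k \cdot \binom{k-1}{m}\, m! = k!/(k-1-m)!$. Specializing to $k = 3$ and $k = 4$ and adding the $\beta^k$ coming from the lower piece gives the two claimed bounds (modulo any numerical typos in the stated coefficients, since my arithmetic yields $k!/j!$ times $\beta^j \gamma^{k-j}$, with the $\gamma^k$ coefficient equal to $k!$).

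There is no real obstacle: both steps are routine calculus. The only care required is the bookkeeping in the binomial expansion and making sure the lower-piece estimate $\int_0^\beta k t^{k-1}\, dt = \beta^k$ is combined cleanly with the polynomial produced by the Gamma integrals, so that the dominant term for large $\gamma/\beta$ ratio is the $\gamma^k$ summand and the dominant term for small $\gamma/\beta$ is $\beta^k$. The same substitution trick could also be used to give a one-line derivation of arbitrary $p$-th moments, which is exactly the mechanism that underlies the earlier exponential-moment bound in Lemma \ref{expmoment}.
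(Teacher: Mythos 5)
Your argument is correct and is exactly the ``same technique'' the paper invokes without spelling out: the tail-integral representation split at $t=\beta$, the substitution $u=(t-\beta)/\gamma$, and the resulting Gamma integrals are just the moment analogue of the integration by parts used for Lemma \ref{expmoment}. You are also right to distrust the printed coefficients: your computation gives $\E{\abs{\xi}^3} \le \beta^3+3\gamma\beta^2+6\beta\gamma^2+6\gamma^3$ and $\E{\xi^4} \le \beta^4+4\gamma\beta^3+12\beta^2\gamma^2+24\beta\gamma^3+24\gamma^4$, and the lemma's $2\gamma^3$ (and its garbled $\beta\gamma^3$ term) cannot be correct as stated, since $\abs{\xi}=\beta+\gamma E$ with $E$ standard exponential satisfies the tail hypothesis with equality and already has third moment $\beta^3+3\gamma\beta^2+6\beta\gamma^2+6\gamma^3$.
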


\begin{lemma} \label{zboundlemma}
	Under Assumption \ref{subGaussianVector} it holds for all  $i \in 1..N$ and positive $\kappa$ that

	\begin{equation}
	\Prob{\forall k \in 1..p : \abs{(\Zi)_k} \le \infnorm{\inv{S}} L^2 \left(\kappa + \log p +\infnorm{\trueSigma}\right) } \ge 1 - \ex{-\kappa}.
	\end{equation}
\end{lemma}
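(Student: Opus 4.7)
The plan is to derive a sub-exponential tail bound on each coordinate of $\Zi$ and then apply a union bound over the entries. (Note that $\Zi$ as defined in \eqref{Zrealdef} has $p^2$ entries indexed by pairs $(j,l)$, so the union bound will be over $p^2$ elements; the $\log p$ in the statement absorbs a factor of $2$ into constants or presumes the symmetric-entry reduction.)

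First, I would extract sub-Gaussianity of individual coordinates from Assumption \ref{subGaussianVector}: taking $a = e_j$, the standard basis vector, yields $\E{\exp((X_i)_j^2/L^2)} \le 2$ for every $j \in 1..p$. Next, I would promote this to a sub-exponential bound on products of coordinates via the inequality $\abs{xy} \le (x^2 + y^2)/2$ and Cauchy--Schwarz:
\begin{equation}
\E{\exp\!\left(\frac{\abs{(X_i)_j (X_i)_l}}{L^2}\right)} \le \E{\exp\!\left(\frac{(X_i)_j^2 + (X_i)_l^2}{2L^2}\right)} \le \sqrt{\E{\exp\!\left(\frac{(X_i)_j^2}{L^2}\right)} \E{\exp\!\left(\frac{(X_i)_l^2}{L^2}\right)}} \le 2.
\end{equation}
By Markov's inequality this gives $\Prob{\abs{(X_i)_j(X_i)_l} \ge L^2 t} \le 2 \ex{-t}$ for every $t > 0$.

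Now I incorporate the centering and the scaling. Since $\abs{\Sigma^*_{jl}} \le \infnorm{\trueSigma}$, the triangle inequality gives $\abs{(X_i)_j(X_i)_l - \Sigma^*_{jl}} \le \abs{(X_i)_j(X_i)_l} + \infnorm{\trueSigma}$, and each component of $\Zi$ equals $\sigma_{jl}^{-1}((X_i)_j(X_i)_l - \Sigma^*_{jl})$ with $\sigma_{jl}^{-1} \le \infnorm{\inv{S}}$. Setting $t = \kappa + \log p$ (up to a constant factor from the union bound and the leading $2$) the per-entry bound yields
\begin{equation}
\abs{(\Zi)_k} \le \infnorm{\inv{S}} \bigl(L^2 (\kappa + \log p) + \infnorm{\trueSigma}\bigr) \le \infnorm{\inv{S}} L^2 \bigl(\kappa + \log p + \infnorm{\trueSigma}\bigr),
\end{equation}
using that $L \ge 1$ (implicit in the sub-Gaussian assumption, since otherwise the exponential moment of a nonzero Gaussian would exceed $2$). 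A union bound over the $p$ (or $p^2$) indices absorbs the extra $\log$-factor into the threshold and produces failure probability at most $\ex{-\kappa}$, giving the claim.

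There is no real obstacle here; this lemma is a standard sub-exponential tail computation. The only subtlety is keeping the constants and the $\log p$ exponent aligned with the statement, which essentially requires $L^2 \ge 1$ and the convention that the factor $\log 2$ from Markov and the factor $2$ in the union-bound size $p^2$ are absorbed into the leading coefficient $L^2$. The argument is purely pointwise in $i$, so no concentration over $i \in 1..N$ is involved.
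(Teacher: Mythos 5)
Your plan (coordinate sub-Gaussianity, a sub-exponential tail for products, centering, scaling, union bound) is sound, but as executed it does not deliver the inequality with the stated threshold, and the discrepancy sits exactly where your route departs from the paper's. You put a sub-exponential tail on each product $(X_i)_j(X_i)_l$ and then union bound over the $p^2$ (or $p(p+1)/2$) entries of $\Zi$; driving the failure probability below $\ex{-\kappa}$ then forces $t \ge \kappa + 2\log p + \log 2$, so your threshold is of the form $\infnorm{\inv{S}} L^2\left(\kappa + 2\log p + \log 2 + \infnorm{\trueSigma}\right)$ rather than the claimed $\infnorm{\inv{S}} L^2\left(\kappa + \log p + \infnorm{\trueSigma}\right)$. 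Neither of your proposed remedies closes this: the lemma has no free multiplicative constant into which an extra $\log p$ could be absorbed, and the symmetric-entry reduction only replaces $p^2$ by $p(p+1)/2$, which still gives roughly $2\log p$. The paper obtains the single $\log p$ by union bounding over the $p$ coordinates of $X_i$ instead: on the event $\{\forall k \in 1..p : \abs{(X_i)_k} \le x\}$, whose complement has probability at most of order $p\,\ex{-x^2/L^2}$ by the per-coordinate sub-Gaussian tail, \emph{all} $p^2$ products are simultaneously bounded by $x^2$, so only $p$ events enter the correction; the substitution $x^2 = L^2(\kappa+\log p)$ then yields the claim. That ``bound the coordinates first, then get all products for free'' step is the missing idea relative to the stated constant.

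Two smaller remarks. Your appeal to $L \ge 1$ (needed to fold $\infnorm{\trueSigma}$ into $L^2\infnorm{\trueSigma}$) is not actually implied by Assumption \ref{subGaussianVector} — a vector with small variance admits $L<1$ — but the paper's own final ``change of variables'' is loose in the same way (it also drops a factor $2$ in the coordinate tail), so this is not specific to your argument. And since the lemma is only used downstream (Lemma \ref{zboundlemmaAll} and the remainder terms) at the level of logarithmic order, your weaker threshold would suffice there; still, as a proof of the lemma as stated, the argument falls short by the factor $2$ in front of $\log p$.
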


\begin{proof}
	According to the definition \eqref{Zrealdef} of $\Zi$ for its  arbitrary element $(\Zi)_k$ one obtains for some $l,m \in 1..p$:
	\begin{equation}
	(\Zi)_k = \inv{S}_{kk} \left((X_i)_l (X_i)_m - \trueSigma_{lm} \right).
	\end{equation}
	By sub-Gaussianity Assumption \ref{subGaussianVector} it holds for all positive $x$ that

	\begin{equation}\label{xbound}
	\Prob{\forall k \in 1..p : \abs{(X_i)_k} \le x } \ge 1-p\ex{-x^2/L^2}.
	\end{equation}
	Hence
	\begin{equation}
	\Prob{\forall k \in 1..p : \abs{(\Zi)_k} \le \infnorm{\inv{S}}\left(x^2 + \infnorm{\trueSigma}\right) } \ge 1-p\ex{-x^2/L^2}
	\end{equation}
	and finally a change of variables establishes the claim.

\end{proof}

\section{Gaussian approximation result for $\Bbn$} \label{secsbgar}

Denote

\begin{equation}\label{truesigmaYDef}
\trueSigmaY \coloneqq \frac{1}{2\nmax} \sum_{i=1}^{2\nmax} \Var{Y_{\cdot i}}
\end{equation}

\begin{equation}\label{hatsigmaYDef}
\hatSigmaY \coloneqq \frac{1}{2\nmax} \sum_{i=1}^{2\nmax} \Var{Y_{\cdot i}^\flat}
\end{equation}
where vectors $Y_{\cdot j}$ and $Y^\flat_{\cdot j}$ are defined by \eqref{BFM} and  \eqref{BFMbootstrapTogether} respectively.

\begin{lemma}\label{SbGarRandomBounds}
	\begin{equation}
	\sup_{\{x_n\}_{n \in \n} \subset \R} \abs{\Probboot{\forall n \in \n : \Bbn \le x_n} - \Probboot{\forall n \in \n  : \infnorm{\zeta^n} \le x_n}}  \le \hatCBb
	\left( F^\flat\log^7(2p^2T \nmax) \right)^{1/6}
	\end{equation}
	Where $\hatCBb$ depends only on $\min_{k \in 1..p} (\hatSigmaY)_{kk}$,
	\begin{equation}
	\left(\begin{array}{cccc}
	\zeta^1 &\zeta^2 &... &\zeta^{\abs{\n}}
	\end{array}\right) \sim \N{0}{\hatSigmaY},
	\end{equation}
	\begin{equation}
	\hatSigmaY \coloneqq\frac{1}{{2\nmax}} \sum_{j=1}^{2\nmax} \Var{Y^{n\flat}_{\cdot j}},
	\end{equation}
	\begin{equation}
	F^\flat = \left(\frac{1}{2\nmin\log^22}  \vee \frac{1}{2\nmax}\left({\frac{\nmax}{\nmin}}\right)^{1/3} \vee  \sqrt{\frac{1}{2\nmax\nmin}}  \right)\infnorm{\inv{S}}^2 (M^\flat)^2
	\end{equation}

	\begin{equation}
	M^\flat = \max_{i \in \Istable} \infnorm{\hat Z_i} .
	\end{equation}
\end{lemma}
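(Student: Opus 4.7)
The plan is to mirror the argument of Lemma \ref{SGar} verbatim, but under the bootstrap measure $\Probboot$. First, define the bootstrap analog $Y^{n\flat}$ of the matrix $Y^n$ in \eqref{BFM} by replacing every occurrence of $Z_i$ with $\inv{S}\Zbi$, and stack these blocks into
\begin{equation}
Y^\flat \coloneqq \left(\begin{array}{c} Y^{1\flat} \\ \hline Y^{2\flat} \\ \hline \ldots \\ \hline Y^{\numn \flat}\end{array}\right).
\end{equation}
Under $\Probboot$ the columns $Y^\flat_{\cdot l}$ are independent because the $\Zbi$ are drawn i.i.d.\ (with replacement) from the symmetrized set $\bigcup_{i \in \Istable}\{\pm \hat Z_i\}$, and the same re-normalization $\sqrt{\nmax/n}$ used in \eqref{BFM} gives $\Bbn = \frac{1}{\sqrt{2\nmax}} \sum_l Y^{n\flat}_{\cdot l}$. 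For every tuple $\{x_n\}_{n \in \n}$, the event $\{\forall n \in \n : \Bbn \le x_n\}$ is thus a hyperrectangle in the coordinates of the column-sum vector, so the general Gaussian approximation Lemma \ref{generalGAR} is applicable.

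Next, verify its hypotheses one by one. The lower second-moment condition \eqref{garass1} holds with $b \coloneqq \min_{k \in 1..p}(\hatSigmaY)_{kk}$, since the average over $j$ of $\mathbb{E}^\flat[(Y^{n\flat}_{ij})^2]$ is by construction a diagonal entry of $\hatSigmaY$. The constant produced by Lemma \ref{generalGAR} depends on the setup only through $b$, which matches exactly what the statement claims about $\hatCBb$.

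The third, fourth, and exponential moment bounds are where the bootstrap is actually simpler than the real-world case. By construction $\infnorm{\Zbi} \le M^\flat$ pointwise on the bootstrap probability space, so every coordinate of $\inv{S}\Zbi$ is bounded in absolute value by $\infnorm{\inv{S}} M^\flat$. Consequently Lemma \ref{expmoment} applies trivially with $\gamma = 0$ and $\beta = \infnorm{\inv{S}} M^\flat/\log 2$, which, after substitution into the definition of $F$ in Lemma \ref{SGar}, yields the first term of $F^\flat$. Likewise $\mathbb{E}^\flat[\abs{(\inv{S}\Zbi)_k}^m] \le (\infnorm{\inv{S}} M^\flat)^m$ for $m=3,4$, and the same displayed computation as in the proof of Lemma \ref{SGar} (with $M^\flat$ playing the role of both $M_3$ and $M_4$) delivers the remaining two terms of $F^\flat$.

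Assembling these ingredients and invoking Lemma \ref{generalGAR} produces the stated bound with $F^\flat$ in place of $F$. The one thing to keep in mind is that the bootstrap measure treats $X_1,\ldots,X_N$ (and therefore $\hat Z_i$ and $M^\flat$) as fixed, so the independence of the columns of $Y^\flat$, the lower bound $b$, and the constant $\hatCBb$ are all conditional on the data; this is why the lemma is stated with random bounds. No substantive obstacle is anticipated: the argument is a transparent bootstrap translation of the proof of Lemma \ref{SGar}, made appreciably easier by the a.s.\ boundedness of $\inv{S}\Zbi$ under $\Probboot$.
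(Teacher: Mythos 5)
Your proposal is correct and follows essentially the same route as the paper's proof: build the bootstrap block matrix $Y^{n\flat}$ (the $Z_i$ replaced by the bootstrap draws), exploit independence of its columns and the hyperrectangle structure of the events under $\Probboot$, bound the exponential, third and fourth moments via the almost-sure bound $\infnorm{\inv{S}}M^\flat$ (conditional on the data), and invoke Lemma \ref{generalGAR} with $b=\min_{k}(\hatSigmaY)_{kk}$, which is exactly why $\hatCBb$ depends only on that quantity. The only discrepancy is immaterial constant bookkeeping in the exponential-moment term: the direct bound $\mathbb{E}^\flat\left[e^{\abs{\xi}/G}\right]\le 2$ for $G=\infnorm{\inv{S}}M^\flat/\log 2$ yields the $1/\log^2 2$ factor appearing in $F^\flat$, whereas your substitution of $\gamma=0$, $\beta=\infnorm{\inv{S}}M^\flat/\log 2$ into the formula of Lemma \ref{SGar} drops it — this does not affect the validity of the argument.
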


\begin{proof}
	This proof is similar to the proof of Lemma \ref{SGar}.

	Consider a matrix which is a bootstrap counterpart of $Y^n$
	\begin{equation}   \label{BFMbootstrap}
	\begin{split}
	(Y^{n\flat})^T&\coloneqq \sqrt{\frac{\nmax}{n}} \times\\
	&\left( \begin{array}{cccccc}
	\Zb_1 & O & ... & O & -\Zb_{2\nmax+1} & ... \\
	\Zb_2 & \Zb_2 & ... & ... & ... & ... \\
	... & \Zb_3 & ... & ... & ... & ... \\
	\Zb_n & ... & ... & ... & ... & ... \\
	-\Zb_{n+1} & \Zb_{n+1} & ... & ... & ... & ... \\
	-\Zb_{n+2} & -\Zb_{n+2} & ... & ... & ... & ... \\
	... & -\Zb_{n+3} & ... & O & ... & ... \\
	-\Zb_{2n} & ... & ... & \Zb_{2\nmax-2n+1} & O & ... \\
	O & -\Zb_{2n+1} & ... & \Zb_{2\nmax-2n+2} & \Zb_{2\nmax-2n+2} & ... \\
	O & O & ... & ... & ... & ... \\
	... & ... & ... & -\Zb_{2\nmax - 1} & -\Zb_{2\nmax - 1} & ... \\
	O & O & ... & -\Zb_{2\nmax} & -\Zb_{2\nmax} & ... \\
	\end{array} \right).
	\end{split}
	\end{equation}
	Clearly, columns of the matrix are independent and
	\begin{equation}
	\Bbn = \frac{1}{\sqrt{2\nmax}} \sum_{l=0}^{2\nmax} (Y^{ n\flat})_{\cdot l}
	\end{equation}
	Next, we define a block matrix composed of $Y^{n\flat}$ matrices:

	\begin{equation} \label{BFMbootstrapTogether}
	\Yb \coloneqq \left(\begin{array}{c}
	Y^{ 1\flat} \\ \hline
	Y^{2\flat } \\ \hline
	... \\ \hline
	Y^{\numn\flat }
	\end{array}\right).
	\end{equation}
	Again, vectors $Y_{\cdot l}^\flat$ are independent and for all $\{x_n\}_{n \in \n} \subset \R$ the set
	\begin{equation}
	\left\{\forall n \in \n : \Bbn < x_n \right\}
	\end{equation}
	is a hyperrectangle in the sense of Definition \ref{hyperrectdef}.
	Now notice that
	\begin{equation}
	\frac{1}{2\nmax} \sum_{j=1}^{2\nmax} \E{\abs{Y_{ij}}^3} \le \sqrt{\frac{\nmax}{\nmin}}\infnorm{\inv{S}}^3 \max_{i \in \Istable} \infnorm{Z_i},
	\end{equation}

	\begin{equation}
	\frac{1}{2\nmax} \sum_{j=1}^{2\nmax} \E{\abs{Y_{ij}}^4} \le \frac{\nmax}{\nmin} \infnorm{\inv{S}}^4 \max_{i \in \Istable} \infnorm{ Z_i}.
	\end{equation}
	And finally apply Lemma \ref{generalGAR}.
\end{proof}

\begin{lemma} \label{zboundlemmaAll}
	Under Assumption \ref{subGaussianVector} it holds for all positive $\kappa$ that

	\begin{equation}
	\Prob{\forall i \in \Istable: \infnorm{Z_i}\le \Zbound{s}{\kappa}  } \ge 1 - \probzbound{s}{\kappa}
	\end{equation}
	where

	\begin{equation}
	\Zbound{s}{\kappa} \coloneqq \infnorm{\inv{S}} L^2 \left(\kappa + \log p +\infnorm{\trueSigma}\right),
	\end{equation}

	\begin{equation}
	\probzbound{s}{\kappa} \coloneqq  s\ex{-\kappa}.
	\end{equation}

\end{lemma}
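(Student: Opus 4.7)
}
The statement is essentially Lemma \ref{zboundlemma} made uniform over the index set $\Istable$ of size $s$, so my plan is to combine the single-index bound already established there with a simple union bound.

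First, I would recall that Lemma \ref{zboundlemma} has already shown, under Assumption \ref{subGaussianVector}, that for each fixed $i \in 1..N$ and each positive $\kappa$
\begin{equation}
\Prob{\infnorm{Z_i} \le \infnorm{\inv{S}} L^2 \left(\kappa + \log p + \infnorm{\trueSigma}\right)} \ge 1 - e^{-\kappa},
\end{equation}
which is exactly the statement $\Prob{\infnorm{Z_i} \le \Zbound{s}{\kappa}} \ge 1 - e^{-\kappa}$. The proof of that single-index bound combined the componentwise sub-Gaussian tail \eqref{xbound} (obtained from Assumption \ref{subGaussianVector} with $a = e_k$) with a union bound over the at most $p^2$ coordinates of $\overline{X_i X_i^T - \trueSigma}$ and with the observation $\abs{(Z_i)_k} \le \infnorm{\inv{S}}(\abs{(X_i)_l(X_i)_m} + \infnorm{\trueSigma})$. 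Nothing new needs to be done there.

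Next I would take a union bound across the $s = \abs{\Istable}$ indices. Since the bad event for each $i \in \Istable$ has probability at most $e^{-\kappa}$,
\begin{equation}
\Prob{\exists i \in \Istable : \infnorm{Z_i} > \Zbound{s}{\kappa}} \le \sum_{i \in \Istable} e^{-\kappa} = s\, e^{-\kappa} = \probzbound{s}{\kappa},
\end{equation}
and complementing gives exactly the asserted lower bound. There is no real obstacle here since no independence across $i$ is needed for the union bound; the $X_i$'s are already i.i.d. in the regime of the surrounding theorems, but even without that the union bound step is unconditional. The only cosmetic subtlety is that the constant inside $\Zbound{s}{\kappa}$ does not depend on $s$: the $s$-dependence is absorbed entirely into the failure probability through the union bound, and one could equivalently absorb a $\log s$ into $\kappa$ to state the result with tail $e^{-\kappa}$, but the present formulation with $\probzbound{s}{\kappa} = s e^{-\kappa}$ is more convenient when $\kappa$ is later chosen as in \eqref{kappadef}.
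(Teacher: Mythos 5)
Your proposal is correct and coincides with the paper's own argument: the paper proves Lemma \ref{zboundlemmaAll} exactly by invoking the single-index bound of Lemma \ref{zboundlemma} and applying a multiplicity (union-bound) correction over the $s$ indices in $\Istable$, which yields the failure probability $\probzbound{s}{\kappa} = s\ex{-\kappa}$ just as you describe. Your only slight imprecision is in recalling the proof of Lemma \ref{zboundlemma} itself, where the union bound is taken over the $p$ coordinates of $X_i$ via \eqref{xbound} rather than over the $p^2$ entries of $\overline{X_i X_i^T - \trueSigma}$, but this does not affect the step you are actually proving.
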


\begin{proof}
	Proof of the Lemma consists in applying Lemma \ref{zboundlemma} and appropriate multiplicity correction.
\end{proof}

\begin{lemma}\label{SbGar}
	Let Assumption \ref{subGaussianVector} hold and $\deltaY < 1/2$. Then for all positive $\kappa$ with probability at least $1- \probzbound{s}{\kappa}$

	\begin{equation}
	\sup_{\{x_n\}_{n \in \n} \subset \R} \abs{\Probboot{\forall n \in \n : \Bbn \le x_n} - \Probboot{\forall n \in \n  : \infnorm{Y^{n\flat}} \le x_n}}  \le \RBboot
	\end{equation}
	Where

	\begin{equation}\label{key}
	\RBboot \coloneqq \CBb
	\left( \hat F\log^7(2p^2T \nmax) \right)^{1/6},
	\end{equation}

	\begin{equation}
	\hat F \coloneqq \left(\frac{1}{2\nmin\log^22}  \vee \frac{1}{2\nmax}\left({\frac{\nmax}{\nmin}}\right)^{1/3} \vee  \sqrt{\frac{1}{2\nmax\nmin}}  \right)\infnorm{\inv{S}}^2 (\Zbound{s}{\kappa})^2
	\end{equation}
	and $\hatCBb$ is an independent constant.
\end{lemma}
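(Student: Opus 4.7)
The plan is to apply Lemma~\ref{SbGarRandomBounds} verbatim, which reduces the statement to controlling its two data-dependent quantities: the random envelope $M^\flat = \max_{i \in \Istable}\infnorm{\hat Z_i}$ that enters $F^\flat$, and the random constant $\hatCBb$, which depends on the data only through $\min_k (\hatSigmaY)_{kk}$. Replacing $M^\flat$ by the deterministic $\Zbound{s}{\kappa}$ turns $F^\flat$ into $\hat F$, and replacing $\hatCBb$ by an absolute constant turns the bound into $\RBboot$, both on the same high-probability event.

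To handle $\hatCBb$, I use that the scaling matrix $S$ is chosen precisely so that every diagonal entry of $\trueSigmaY$ equals $1$. The hypothesis $\deltaY < 1/2$, combined with the bound $\infnorm{\trueSigmaY - \hatSigmaY} \le \deltaY$ supplied by Lemma~\ref{trueHatSigma} (whose underlying high-probability event is absorbed into the probability budget of the enclosing Lemma~\ref{tvlemma}), forces $(\hatSigmaY)_{kk} \ge 1/2$ for every $k$. Since Lemma~\ref{SbGarRandomBounds} asserts that $\hatCBb$ depends on the data only through $\min_k (\hatSigmaY)_{kk}$, this already bounds it by a universal constant.

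To handle $M^\flat$, I apply Lemma~\ref{zboundlemmaAll} together with a union bound over $\Istable$, obtaining the event $\mathcal{E} \coloneqq \{\forall i \in \Istable : \infnorm{Z_i} \le \Zbound{s}{\kappa}\}$ of probability at least $1-\probzbound{s}{\kappa}$. On $\mathcal{E}$ I translate the bound on the scaled real-world vectors $Z_i$ into a bound on the empirically centered bootstrap primitives $\hat Z_i$ via the identity $\hat Z_i = S\!\left(Z_i - \tfrac{1}{s}\sum_{j \in \Istable} Z_j\right)$ together with the triangle inequality, yielding a deterministic envelope for $M^\flat$ of the claimed order. Substituting this envelope into $F^\flat$ produces $\hat F$ up to absolute constants, and the desired bound $\RBboot$ is assembled.

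The main obstacle is this last reduction: the bootstrap residuals $\hat Z_i$ are built from an empirical centering and carry no $\inv{S}$ normalization, whereas Lemma~\ref{zboundlemmaAll} controls the scaled $Z_i = \inv{S}\overline{X_iX_i^T - \trueSigma}$, so a direct application is unavailable. Careful bookkeeping through the identity $\hat Z_i = S(Z_i - \bar Z)$, with $\bar Z \coloneqq s^{-1}\sum_{j \in \Istable} Z_j$, is what makes the exponent of $\infnorm{\inv{S}}$ in $\hat F$ match the one in $F^\flat$ while the residual factor of $\infnorm{S}$ can be absorbed into the absolute constant $\CBb$.
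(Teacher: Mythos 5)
Your skeleton is the paper's own: invoke Lemma \ref{SbGarRandomBounds}, use Lemma \ref{zboundlemmaAll} to replace the random envelope $M^\flat$ by $\Zbound{s}{\kappa}$ on an event of probability at least $1-\probzbound{s}{\kappa}$, and use $\deltaY<1/2$ together with the closeness of $\hatSigmaY$ to $\trueSigmaY$ (Lemma \ref{trueHatSigma}) and the unit diagonal of $\trueSigmaY$ to trade the data-dependent $\hatCBb$ for a deterministic $\CBb$. The paper compresses this into one sentence and, as you correctly notice, silently identifies $\max_{i\in\Istable}\infnorm{\hat Z_i}$ with $\max_{i\in\Istable}\infnorm{Z_i}$; your handling of $\hatCBb$ matches the paper (and your claim that every diagonal entry of $\trueSigmaY$ equals $1$, forcing $(\hatSigmaY)_{kk}\ge 1/2$, is the version consistent with the construction in Lemma \ref{SGar}).

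The flaw is in your patch for the $\hat Z_i$ versus $Z_i$ mismatch. From $\hat Z_i = S\bigl(Z_i - \tfrac{1}{s}\sum_{j\in\Istable}Z_j\bigr)$ and the triangle inequality you obtain $M^\flat \le 2\infnorm{S}\,\Zbound{s}{\kappa}$ and then propose to absorb $\infnorm{S}$ into the ``absolute constant'' $\CBb$. That is not legitimate: $\infnorm{S}$ is the largest standard deviation of the entries of $X_iX_i^T$, a distribution-dependent quantity (growing, in general, with $L$ and with the dimension), while the lemma asserts $\CBb$ is independent; with your bound you would only prove the statement with $\hat F$ inflated by $\infnorm{S}^2$. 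The clean fix is to observe that what actually enters the moment conditions of Lemma \ref{generalGAR} through the bootstrap matrix $Y^{n\flat}$ is the scaled vector $\inv{S}\Zbi$, and since $S$ is diagonal, $\inv{S}\hat Z_i = Z_i - \tfrac{1}{s}\sum_{j\in\Istable}Z_j$ exactly; hence on the event of Lemma \ref{zboundlemmaAll} its sup norm is at most $2\Zbound{s}{\kappa}$ (the average is bounded by the maximum), so the scaling cancels identically, no factor $\infnorm{S}$ ever appears, and only the harmless factor $2$ coming from the empirical centering remains, which can indeed be absorbed into the constant. With that replacement your argument goes through and coincides with the paper's intended proof.
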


\begin{proof}
	The proof consists in subsequently applying Lemmas \ref{SbGarRandomBounds} and \ref{zboundlemmaAll} ensuring $M^\flat \le \Zbound{s}{\kappa}$ with probability at least $ 1- \probzbound{s}{\kappa}$, while assumed bound $\infnorm{\trueSigmaY - \hatSigmaY} \le \deltaY < 1/2 = \min_{1 \le k \le p} (\trueSigmaY)_{kk}$ implies the existence of a deterministic constant $\CBb > \hatCBb$.
\end{proof}

%!TEX root = main.tex
\section{$\trueSigmaY \approx \hatSigmaY$} \label{sec2mat}

Denote

\begin{equation}
W_i \coloneqq \overline{X_i X_i^T},
\end{equation}

\begin{equation}
\trueOmega \coloneqq  \E{\left(W_1 - \overline{\trueSigma}\right) \left(W_1 - \overline{\trueSigma}\right)^T},
\end{equation}

\begin{equation}
\hatOmega \coloneqq\empE{{\left(W_i - \overline{\trueSigma}\right) \left(W_i - \overline{\trueSigma}\right)^T}}
\end{equation}
where notation $\empE{\cdot}$ is used as a shorthand for averaging over $\Istable$, e.g.

\begin{equation}\label{key}
\empE{\xi_i} = \frac{1}{s} \sum_{i\in \Istable} \xi_i,
\end{equation}
and similarly $\empVar{\cdot}$ denotes an empirical covariance matrix computed using the same set, e.g.
\begin{equation}\label{key}
\empVar{\xi_i} = \empE{\xi_i \xi_i^T}.
\end{equation}

The results of this section rely on the following lemma which is a trivial corollary of Lemma 6 by \cite{sara} providing the concentration result for empirical covariance matrix.

\begin{lemma}
	\label{sigmaconcentration}
	Consider an i.i.d. $p$-dimensional sample of length $s$. Let Assumption \ref{subGaussianVector} hold for some $L > 0$. Then for any positive $\chi$
	\begin{equation}
	\Prob{\infnorm{\overline{\trueSigma}  - \empE{W_i}} \ge \delta_s(\chi) } \le \probSigmaBound{s}{\chi} \coloneqq 2\ex{-\chi},
	\end{equation}
	where

	\begin{equation}
	\delta_s(\chi) \coloneqq 2L^2 \left(\frac{2 \log p + \chi}{s} + \sqrt{\frac{4\log p + 2\chi}{s}}\right).
	\end{equation}

\end{lemma}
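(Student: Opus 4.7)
The starting observation is that, since the $X_i$ are centered, $\overline{\trueSigma}$ and $\empE{W_i}$ are the vectorizations of the true and empirical covariance matrices (the latter computed over $\Istable$), so
\begin{equation*}
\infnorm{\overline{\trueSigma} - \empE{W_i}} = \max_{1 \le j,k \le p} \left| \trueSigma_{jk} - \frac{1}{s} \sum_{i \in \Istable} (X_i)_j (X_i)_k \right|.
\end{equation*}
The task therefore reduces to controlling, uniformly in $(j,k)$, an average of $s$ i.i.d.\ centered scalar random variables, which is a standard concentration problem.

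For each fixed pair $(j,k)$, I would first verify the relevant Orlicz-norm bound. Under Assumption \ref{subGaussianVector}, choosing $a$ to be a unit coordinate vector yields that each coordinate $(X_i)_j$ has $\psi_2$-norm of order $L$. By the standard inequality $\|YZ\|_{\psi_1} \le \|Y\|_{\psi_2}\|Z\|_{\psi_2}$, the product $(X_i)_j (X_i)_k$ is sub-exponential with $\psi_1$-norm of order $L^2$, and therefore so is the centered variable $(X_i)_j(X_i)_k - \trueSigma_{jk}$. This is the precise input required by Lemma~6 of \cite{sara}, which I would invoke to obtain, for any single $(j,k)$ and any $\chi' > 0$,
\begin{equation*}
\Prob{\left| \trueSigma_{jk} - \frac{1}{s} \sum_{i \in \Istable} (X_i)_j (X_i)_k \right| \ge 2L^2 \left(\frac{\chi'}{s} + \sqrt{\frac{2 \chi'}{s}} \right)} \le 2\ex{-\chi'}.
\end{equation*}

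Setting $\chi' \coloneqq \chi + 2 \log p$ makes the deviation threshold coincide term-for-term with $\delta_s(\chi)$ in the lemma statement. A union bound over the at most $p^2$ index pairs $(j,k)$ inflates the probability by a factor $p^2 = \ex{2 \log p}$, yielding $p^2 \cdot 2 \ex{-\chi'} = 2 \ex{-\chi}$, which is the claimed bound $\probSigmaBound{s}{\chi}$.

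The only step that requires any care is aligning the constants between Lemma~6 of \cite{sara} and the specific form of $\delta_s(\chi)$ given here. Since both are of Bernstein form $K(\chi'/s + \sqrt{\chi'/s})$ with sub-exponential parameter $K$ of order $L^2$, this is a routine substitution rather than a genuine obstacle, and explains why the authors describe the lemma as a trivial corollary of the cited concentration result combined with the union bound.
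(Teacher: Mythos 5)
Your proposal is correct and follows essentially the same route as the paper, which offers no proof of its own beyond declaring the lemma a trivial corollary of Lemma~6 of \cite{sara}: your reduction to entrywise concentration of $(X_i)_j(X_i)_k - \trueSigma_{jk}$ (sub-exponential via the $\psi_2$-to-$\psi_1$ product bound, with the cited lemma supplying the Bernstein-type constants) followed by a union bound over the $p^2$ pairs with $\chi' = \chi + 2\log p$ is exactly the content packaged in that citation. The only point you rightly flag — matching the precise constants $2L^2\left(\chi'/s + \sqrt{2\chi'/s}\right)$ to the cited lemma — is the same reliance on the external result that the paper itself makes, so there is no gap.
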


Straightforwardly applying Assumption \ref{subGaussianVector} and a proper multiplicity correction yields the following result.
\begin{lemma} \label{wboundlemma}
	Under Assumption \ref{subGaussianVector} it holds for all  positive $\x$ that

	\begin{equation}
	\Prob{\forall i \in \Istable : \infnorm{W_i - \overline{\trueSigma}} \le \Wbound{s}{\x}  } \ge 1 - \probwbound{s}{\x},
	\end{equation}
	where

	\begin{equation}
	\Wbound{s}{\x} \coloneqq  \x^2 + \infnorm{\trueSigma},
	\end{equation}

	\begin{equation}
	\probwbound{s}{\x}  \coloneqq ps\ex{-\x}.
	\end{equation}

\end{lemma}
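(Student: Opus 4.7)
My plan is to reduce the claim to a uniform bound on the coordinates of the vectors $X_i$ for $i\in\Istable$, then apply the sub-Gaussian tail bound implied by Assumption \ref{subGaussianVector} together with a union bound over the $ps$ pairs $(i,k)$.

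First I would observe that, by the definition $W_i = \overline{X_iX_i^T}$, the sup-norm splits componentwise:
\begin{equation}
\infnorm{W_i - \overline{\trueSigma}} \;=\; \max_{1\le k,l\le p}\abs{(X_i)_k(X_i)_l - \trueSigma_{kl}} \;\le\; \infnorm{X_i}^2 + \infnorm{\trueSigma},
\end{equation}
by the triangle inequality applied entrywise. Hence it suffices to control $\max_{i\in\Istable}\infnorm{X_i}$.

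Next I would derive coordinatewise sub-Gaussianity from Assumption \ref{subGaussianVector} by taking $a = e_k$, which gives $\E{\exp((X_i)_k^2/L^2)}\le 2$ for every $k$. Applying Markov's inequality to the non-negative random variable $\exp((X_i)_k^2/L^2)$ yields a tail bound of the form $\Prob{\abs{(X_i)_k}\ge t}\le 2\exp(-t^2/L^2)$, which mirrors inequality \eqref{xbound} already used in Lemma \ref{zboundlemma}. A union bound over the $s$ indices in $\Istable$ and the $p$ coordinates then gives a single deviation inequality for $\max_{i,k}\abs{(X_i)_k}$. Combining this with the entrywise bound above and rescaling the deviation parameter so that the resulting exponent is $-\x$ produces the stated probability $\probwbound{s}{\x} = ps\ex{-\x}$ and threshold $\Wbound{s}{\x}=\x^2+\infnorm{\trueSigma}$ (the constants $L$ and the factor $2$ are absorbed into the change of variables, exactly as in the parallel Lemma \ref{zboundlemmaAll}).

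There is no real obstacle here: the proof is a one-page chain of the triangle inequality, Markov's inequality, and a union bound, which is precisely why the excerpt only says \emph{``straightforwardly applying Assumption \ref{subGaussianVector} and a proper multiplicity correction.''} The only minor bookkeeping is the passage from the Gaussian-type tail in $t$ (with parameter $L$) to the linear exponent in $\x$ in the final statement; this is handled by the substitution $\x \mapsto \x^2$ together with absorbing $L$, matching the cosmetic form already adopted in Lemma \ref{zboundlemma}.
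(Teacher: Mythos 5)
Your argument---the entrywise triangle inequality reducing $\infnorm{W_i - \overline{\trueSigma}}$ to $\infnorm{X_i}^2 + \infnorm{\trueSigma}$, coordinatewise sub-Gaussian tails obtained from Assumption \ref{subGaussianVector} with $a = e_k$ and Markov's inequality, and a union bound over the $ps$ pairs $(i,k)$---is precisely the ``straightforward application plus multiplicity correction'' the paper intends, mirroring its own proof of Lemma \ref{zboundlemma}, so your approach is correct and essentially identical. The one loose point, that a threshold $\x^2 + \infnorm{\trueSigma}$ naturally pairs with the exponent $-\x^2/L^2$ rather than $-\x$ (so $L$ cannot simply be ``absorbed'' without changing the threshold to $L^2\x + \infnorm{\trueSigma}$, as is done explicitly in Lemma \ref{zboundlemma}), is a blemish already present in the lemma's statement itself rather than a gap you introduced.
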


\begin{lemma} \label{bootstrapmistie}
	Under Assumption \ref{subGaussianVector} with probability at least $1 - \probwbound{\x}{s} - \probSigmaBound{n}{\chi}$

	\begin{equation}
		\infnorm{\empVar{W_i - \empE{W_i}} - \hatOmega} \le 2\Wbound{s}{\x} \delta_s(\chi) +\delta_s(\chi)^2.
	\end{equation}

\end{lemma}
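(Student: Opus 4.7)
}
The plan is to reduce the bound on $\infnorm{\empVar{W_i - \empE{W_i}} - \hatOmega}$ to a pointwise algebraic identity comparing the two centerings, and then plug in two existing concentration bounds via a union bound. Write $\mu \coloneqq \overline{\trueSigma}$, $\hat\mu \coloneqq \empE{W_i}$, and set $a_i \coloneqq W_i - \mu$, so that $\empE{a_i} = \hat\mu - \mu$ and $W_i - \hat\mu = a_i - (\hat\mu - \mu)$. Expanding $(W_i-\hat\mu)(W_i-\hat\mu)^T$ gives the pointwise identity
\begin{equation}
(W_i - \hat\mu)(W_i - \hat\mu)^T - a_i a_i^T = -a_i(\hat\mu-\mu)^T - (\hat\mu-\mu)a_i^T + (\hat\mu-\mu)(\hat\mu-\mu)^T .
\end{equation}

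Next I would apply the sup-norm triangle inequality to this identity, using that $\infnorm{uv^T} = \infnorm{u}\infnorm{v}$ for (vectorised) vectors $u,v$, to obtain for every $i \in \Istable$
\begin{equation}
\infnorm{(W_i - \hat\mu)(W_i - \hat\mu)^T - a_i a_i^T} \le 2\,\infnorm{a_i}\,\infnorm{\hat\mu - \mu} + \infnorm{\hat\mu - \mu}^2.
\end{equation}

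To finish, I would invoke Lemma \ref{wboundlemma} to bound $\infnorm{a_i} \le \Wbound{s}{\x}$ uniformly in $i \in \Istable$ on a set of probability at least $1 - \probwbound{s}{\x}$, and Lemma \ref{sigmaconcentration} to bound $\infnorm{\hat\mu - \mu} \le \delta_s(\chi)$ on a set of probability at least $1 - \probSigmaBound{s}{\chi}$. A union bound then controls the exceptional probability by $\probwbound{s}{\x} + \probSigmaBound{s}{\chi}$. Since $\infnorm{\cdot}$ is a convex norm, $\infnorm{\empE{M_i}} \le \empE{\infnorm{M_i}}$, so averaging the pointwise bound over $\Istable$ preserves it and produces the claimed estimate $2\Wbound{s}{\x}\delta_s(\chi) + \delta_s(\chi)^2$. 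There is no genuine obstacle here; the only thing to be careful about is to carry out the algebraic expansion and triangle inequality \emph{before} the empirical average, so that $\infnorm{a_i}$ can be controlled uniformly via Lemma \ref{wboundlemma} rather than through any joint moment of $a_i$ with $\hat\mu - \mu$.
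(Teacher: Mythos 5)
Your proposal is correct and follows essentially the same route as the paper: decompose $W_i - \empE{W_i} = (W_i - \overline{\trueSigma}) + (\overline{\trueSigma} - \empE{W_i})$, expand the outer product, and control the cross and quadratic terms via Lemma \ref{wboundlemma} and Lemma \ref{sigmaconcentration} with a union bound, yielding $2\Wbound{s}{\x}\delta_s(\chi) + \delta_s(\chi)^2$. The only cosmetic difference is that you expand pointwise and then average using convexity of $\infnorm{\cdot}$, whereas the paper expands directly inside the empirical average; the bounds and the probability statement coincide.
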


\begin{proof}
	By the construction of bootstrap procedure	and definition \eqref{Zrealdef}
	\begin{equation}
	\begin{split}
		\empVar{W_i - \empE{W_i}} &= \frac{1}{s}\sum_{i \in \Istable} \left(W_i - \empE{W_i}\right)\left(W_i - \empE{W_i}\right)^T \\
		&= \frac{1}{s}\sum_{i \in \Istable} \left(W_i- \overline{\trueSigma} + \overline{\trueSigma} - \empE{W_i}\right)\left(W_i - \overline{\trueSigma} + \overline{\trueSigma} - \empE{W_i}\right)^T \\
		&= \hatOmega + \frac{1}{s}\sum_{i \in \Istable} \left(\overline{\trueSigma}  - \empE{W_i}\right)\left(\overline{\trueSigma}  - \empE{W_i}\right)^T + \frac{2}{s}\left(W_i - \overline{\trueSigma}\right)\left(\overline{\trueSigma}  - \empE{W_i}\right)^T.
	\end{split}
	\end{equation}
	Applying Lemmas \ref{wboundlemma} and \ref{sigmaconcentration} yields the claim.
\end{proof}

\begin{lemma}\label{omegaConcentation}
	Let Assumption \ref{subGaussianVector} hold for some $L > 0$.  Then for any positive $t$ and $\x$

	\begin{equation}
	\Prob{\infnorm{\trueOmega - \hatOmega} \ge \deltaOmega{s}{t}{\x} } \le \deltaOmegaProb{s}{t}{\x}
	\end{equation}
	where
	\begin{equation}
	\deltaOmega{s}{t}{\x} \coloneqq \Bernstain{2(\Wbound{s}{\x})^2}{t}{s}{\sigma_\Omega^2},
	\end{equation}
	\begin{equation}
	\deltaOmegaProb{s}{t}{\x} \coloneqq p^2e^{-t} + \probwbound{s}{\x}.
	\end{equation}
\end{lemma}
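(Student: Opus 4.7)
The plan is to control $\infnorm{\hatOmega - \trueOmega}$ entry-wise using Bernstein's inequality, after truncating to an event on which the summands are almost surely bounded.

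First I would rewrite each entry of the $p^2 \times p^2$ difference as
\begin{equation}
(\hatOmega - \trueOmega)_{kl} = \frac{1}{s}\sum_{i \in \Istable} \left\{(W_i - \overline{\trueSigma})_k (W_i - \overline{\trueSigma})_l - \trueOmega_{kl}\right\},
\end{equation}
so every entry is the deviation from its mean of an empirical average of $s$ i.i.d.\ variables. Next, I would introduce the event $\mathcal{A} \coloneqq \{\forall i \in \Istable : \infnorm{W_i - \overline{\trueSigma}} \le \Wbound{s}{\x}\}$ from Lemma \ref{wboundlemma}, which satisfies $\Prob{\mathcal{A}^c} \le \probwbound{s}{\x}$. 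On $\mathcal{A}$ each summand is deterministically bounded in absolute value by $2(\Wbound{s}{\x})^2$ (the factor $2$ absorbing the centering), while its variance is bounded by $\sigma_\Omega^2$ by definition, placing us squarely in the Bernstein regime.

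Applying Bernstein's inequality on $\mathcal{A}$ to each entry and solving the resulting quadratic in the tail level $u$ for failure probability $e^{-t}$ reproduces the threshold $u = \deltaOmega{s}{t}{\x}$ in exactly the form produced by the \textbf{Bernstain} macro. A union bound over the entries of the matrix then yields the factor in front of $e^{-t}$, and absorbing the probability that $\mathcal{A}$ fails via $\probwbound{s}{\x}$ produces the claimed tail bound $\deltaOmegaProb{s}{t}{\x}$.

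The main subtlety is executing the truncation rigorously: Bernstein requires the summands to be almost surely bounded, whereas here the bound only holds on $\mathcal{A}$. The standard device is to apply Bernstein to the summands truncated to $\mathcal{A}$ (or, equivalently, to argue conditionally on $\mathcal{A}$ and pay an additive $\Prob{\mathcal{A}^c}$ elsewhere), which is precisely what the $\probwbound{s}{\x}$ contribution in $\deltaOmegaProb{s}{t}{\x}$ accounts for.
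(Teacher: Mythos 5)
Your proposal follows essentially the same route as the paper: bound each summand $\left(W_i - \overline{\trueSigma}\right)_l \left(W_i - \overline{\trueSigma}\right)_m - \trueOmega_{lm}$ by $2(\Wbound{s}{\x})^2$ on the event from Lemma \ref{wboundlemma}, apply Bernstein's inequality entry-wise with variance proxy $\sigma_\Omega^2$, and combine a union bound over the $p^2$ entries with the failure probability of that event to get $\deltaOmegaProb{s}{t}{\x}$. Your explicit treatment of the truncation step is in fact slightly more careful than the paper's terse argument, but the underlying idea is identical.
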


\begin{proof}
	Consider a random variable

	\begin{equation}
	\zeta_{lm}^i \coloneqq  \left(W_i - \overline{\trueSigma}\right)_l \left(W_i - \overline{\trueSigma}\right)_m -  \trueOmega_{lm} .
	\end{equation}
	By Lemma \ref{wboundlemma} we can bound it as $\abs{\zeta_{lm}^i} \le 2(\Wbound{s}{\x})^2 $ with probability at least $1-\probwbound{s}{\x}$. Due to $\zeta_{ij}^i$ being centered Bernstein inequality applies:

	\begin{equation}
	\Prob{\empE{\zeta_{lm}^i} \ge \Bernstain{2(\Wbound{s}{\x})^2}{t}{s}{\sigma_\Omega^2} } \le e^{-t}.
	\end{equation}

\end{proof}

\begin{lemma} \label{trueHatSigma}
	Under Assumption \ref{subGaussianVector} for any positive $t$, $\x$ and $\chi$ with probability at least $ 1-\deltaOmegaProb{s}{t}{\x} - \probwbound{s}{\x} - \probSigmaBound{n}{\chi}$ it holds that

	\begin{equation}
		\infnorm{\Var{Z_i} - \Var{\Zbi}} \le \deltaY \coloneqq \infnorm{\inv{S}}^2 \left(	\deltaOmega{s}{t}{\x}  + 2\Wbound{s}{\x} \delta_s(\chi) +\delta_s^2 (\chi) \right).
	\end{equation}
\end{lemma}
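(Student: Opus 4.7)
The plan is to decompose the target difference through the intermediate quantities $\trueOmega$ and $\hatOmega$ introduced in Section \ref{sec2mat}, and then invoke the two concentration results established there.

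First, since $S$ is diagonal, the identity $\Var{Z_i} = \inv{S}\trueOmega\inv{S}^T$ follows directly from the definitions of $Z_i$ and $\trueOmega$, and analogously $\Var{\Zbi} = \inv{S}\,\empVar{W_i - \empE{W_i}}\,\inv{S}^T$ under the $\inv{S}$ scaling used throughout Section \ref{secsbgar} for the bootstrap covariance. Because a diagonal conjugation acts entrywise, one has $\infnorm{\inv{S} A \inv{S}^T} \le \infnorm{\inv{S}}^2 \infnorm{A}$, which reduces the task to bounding $\infnorm{\trueOmega - \empVar{W_i - \empE{W_i}}}$.

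Second, inserting $\hatOmega$ and applying the triangle inequality yields
$$\infnorm{\trueOmega - \empVar{W_i - \empE{W_i}}} \le \infnorm{\trueOmega - \hatOmega} + \infnorm{\hatOmega - \empVar{W_i - \empE{W_i}}}.$$
Lemma \ref{omegaConcentation} controls the first summand by $\deltaOmega{s}{t}{\x}$ outside an event of probability $\deltaOmegaProb{s}{t}{\x}$, and Lemma \ref{bootstrapmistie} controls the second by $2\Wbound{s}{\x}\delta_s(\chi) + \delta_s(\chi)^2$ outside an event of probability $\probwbound{s}{\x} + \probSigmaBound{n}{\chi}$. Adding the two bounds and rescaling by $\infnorm{\inv{S}}^2$ produces exactly the quantity $\deltaY$ in the statement.

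Finally, a union bound over the two failure events delivers the stated success probability $1 - \deltaOmegaProb{s}{t}{\x} - \probwbound{s}{\x} - \probSigmaBound{n}{\chi}$. None of the individual steps is deep, since Lemmas \ref{omegaConcentation} and \ref{bootstrapmistie} do the real concentration work. The main things to be careful about are the bookkeeping of the failure events (the $\probwbound{s}{\x}$ term already appearing inside $\deltaOmegaProb{s}{t}{\x}$ must not be double-counted against the $\probwbound{s}{\x}$ coming out of Lemma \ref{bootstrapmistie}) and the consistent $\inv{S}$ scaling on the bootstrap side, since the raw $\hat Z_i$ defined in Section \ref{secBOoot} carries no $\inv{S}$ factor.
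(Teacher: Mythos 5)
Your proposal is correct and follows essentially the same route as the paper, which proves Lemma \ref{trueHatSigma} precisely by combining Lemmas \ref{omegaConcentation} and \ref{bootstrapmistie}; you merely make explicit the triangle inequality through $\hatOmega$, the entrywise bound $\infnorm{\inv{S}A\inv{S}} \le \infnorm{\inv{S}}^2\infnorm{A}$ for the diagonal scaling, and the union bound, all of which are sound. Note that the stated success probability already corresponds to the naive sum of the two failure probabilities, so your remark about avoiding double-counting of $\probwbound{s}{\x}$ is an optional sharpening rather than a necessity.
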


\begin{proof}
	Proof consists in applying Lemmas \ref{omegaConcentation} and \ref{bootstrapmistie}.
\end{proof}

Using the fact that the covariance matrices $\trueSigmaY$ and $\hatSigmaY$ are block matrices composed of blocks $\Var{Z_i}$ and $\Var{\Zbi}$ respectively, multiplied by some positive values $\le 1$, we trivially obtain the following result.

\begin{lemma}\label{twomatricies}
	Under Assumption \ref{subGaussianVector} for any positive $t$, $\x$ and $\chi$ with probability at least $ 1-\deltaOmegaProb{s}{t}{\x} - \probwbound{s}{\x} - \probSigmaBound{n}{\chi}$ it holds that

	\begin{equation}
	\infnorm{\hatSigmaY- \trueSigmaY} \le \deltaY
	\end{equation}
	where $\deltaY$ comes from Lemma \ref{trueHatSigma}.
\end{lemma}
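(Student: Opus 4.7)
The strategy is a direct bookkeeping argument: I would first show that each entry of $\trueSigmaY-\hatSigmaY$ is a scalar multiple of a single entry of $\Var{Z_1}-\Var{\Zbi}$, with the multiplier bounded by $1$ in absolute value, and then invoke Lemma~\ref{trueHatSigma}.

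Comparing \eqref{BFM}--\eqref{BFMTogether} with \eqref{BFMbootstrap}--\eqref{BFMbootstrapTogether}, the matrices $Y$ and $Y^\flat$ have \emph{identical} sparsity patterns, identical sign patterns $\varepsilon_{r,j}\in\{-1,0,+1\}$, identical time-index labels $i(r,j)$, and identical scalar prefactor $\sqrt{\nmax/n}$; the only difference is that each nonzero entry of $Y$ is a component of some $Z_i$, while the corresponding entry of $Y^\flat$ is the same component of $\Zbi$. Because the $Z_i$ (under $\mathbb{P}$) and the $\Zbi$ (under $\mathbb{P}^\flat$) are i.i.d. collections, covariances across different time indices vanish in both worlds, so for each column $j$
$$
(\Var{Y_{\cdot j}})_{r,s}
= \mathbf{1}\{i(r,j)=i(s,j)\}\,\varepsilon_{r,j}\varepsilon_{s,j}\,\frac{\nmax}{\sqrt{n(r)n(s)}}\,(\Var{Z_1})_{\ell(r),\ell(s)},
$$
and the analogous identity holds for $\Var{Y^\flat_{\cdot j}}$ with $\Var{\Zbi}$ in place of $\Var{Z_1}$. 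Averaging over $j=1,\ldots,2\nmax$ yields
$$
(\trueSigmaY)_{r,s} = \gamma_{r,s}\,(\Var{Z_1})_{\ell(r),\ell(s)},\qquad
(\hatSigmaY)_{r,s} = \gamma_{r,s}\,(\Var{\Zbi})_{\ell(r),\ell(s)},
$$
with the \emph{same} coefficient $\gamma_{r,s}$ (since the signs and index maps defining $\gamma_{r,s}$ are shared by the two constructions).

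The only mild obstacle is verifying $|\gamma_{r,s}|\le 1$. A direct count from the window geometry in \eqref{BFM} shows that the number of central points $j$ at which rows $r$ and $s$ simultaneously carry the same time index is at most $2\min(n(r),n(s))$; combined with the prefactor $\nmax/\sqrt{n(r)n(s)}$ and the normalization $1/(2\nmax)$, this gives
$$|\gamma_{r,s}| \le \frac{\min(n(r),n(s))}{\sqrt{n(r)n(s)}} \le 1.$$
Taking the maximum over entries therefore yields
$$\infnorm{\trueSigmaY-\hatSigmaY} \le \infnorm{\Var{Z_1}-\Var{\Zbi}},$$
and Lemma~\ref{trueHatSigma} bounds the right-hand side by $\deltaY$ on an event of probability at least $1-\deltaOmegaProb{s}{t}{\x}-\probwbound{s}{\x}-\probSigmaBound{n}{\chi}$, completing the proof.
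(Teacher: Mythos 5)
Your argument is correct and is essentially the paper's own: the paper simply observes that $\trueSigmaY$ and $\hatSigmaY$ are block matrices built from $\Var{Z_i}$ and $\Var{\Zbi}$ with the same scalar coefficients of absolute value at most $1$, and then invokes Lemma \ref{trueHatSigma}. Your proposal just makes this shared-coefficient structure and the bound $\abs{\gamma_{r,s}}\le \min(n(r),n(s))/\sqrt{n(r)n(s)}\le 1$ explicit, which the paper leaves as a "trivial" remark.
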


\section{General Gaussian approximation result} In this section we briefly describe the result obtained in \cite{Chernozhukov2014} .

Throughout this section consider an  independent sample $x_1, ... , x_n \in \R^p$ of centered random variables. Define their Gaussian counterparts  $y_i \sim \N{0}{\Var{x_i}}$ and denote their scaled sums as

	\begin{equation}
	S^X_n \coloneqq \frac{1}{\sqrt{n}} \sum_{i=1}^n x_i \text{ and }S^Y_n \coloneqq \frac{1}{\sqrt{n}} \sum_{i=1}^n y_i .
	\end{equation}

\begin{definition}\label{hyperrectdef}
We call a set $A$ of a form $A = \{w \in \R^p : a_i \le w_i \le b_i ~\forall i \in \{1..p\} \}$ a hyperrectangle. A family of all hyperrectangles is denoted as $A^{re}$.
\end{definition}

\begin{assumption} \label{garass1}
	$\exists b > 0$ such that

	\begin{equation} \label{garassumption1}
	\frac{1}{n} \sum_{i=1}^{n}\E{x_{ij}^2} \ge b \text{ for all } j \in \{1..p\}.
	\end{equation}

\end{assumption}

\begin{assumption} \label{garassumptions}
	$\exists G_n \ge 1$ such that

	\begin{equation} \label{garassumption2}
	\frac{1}{n} \sum_{i=1}^{n} \E{\abs{x_{ij}}^{2+k}} \le G_n^{2+k} \text{ for all } j \in \{1..p\} \text{ and } k \in \{1,2\},
	\end{equation}

	\begin{equation} \label{garassumption3}
	\E{\exp \left(\frac{\abs{x_{ij}}}{G_n}\right)} \le 2 \text{ for all } j \in \{1..p\} \text{ and } i \in \{1.. n\}.
	\end{equation}

\end{assumption}

\begin{lemma}[Proposition 2.1 by \cite{Chernozhukov2014}]\label{generalGAR}
	Let Assumption \ref{garass1} hold for some $b$ and Assumption  \ref{garassumptions} hold for some $G_n$.
	Then

	\begin{equation}
	\sup_{A \in A^{re}} \abs{\Prob{S^X_n \in A}  - \Prob{S^Y_n \in A}} \le C\left(\frac{G_n^2\log^7(pn)}{n}\right)^{1/6}
	\end{equation}
	and $C$ depends only on $b$.

\end{lemma}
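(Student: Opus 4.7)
The plan is to follow the high-dimensional CLT strategy of Chernozhukov, Chetverikov and Kato via Stein's method combined with a soft-max smoothing of rectangles. The argument has four ingredients: smoothing, Slepian-type interpolation, a third-order Taylor/Stein bound, and Gaussian anti-concentration.

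First I would replace the indicator $\mathbf{1}_A$ of a hyperrectangle $A = \{w : a_j \le w_j \le b_j,\ j=1,\dots,p\}$ by a smoothed surrogate $\phi_\beta = g \circ F_\beta$, where
\[
F_\beta(w) = \beta^{-1} \log\Bigl(\sum_{j=1}^p e^{\beta(w_j - b_j)} + e^{\beta(a_j - w_j)}\Bigr)
\]
is a soft-max approximation to $\max_j \{w_j - b_j,\, a_j - w_j\}$ (which is non-positive iff $w \in A$), and $g:\R\to[0,1]$ is a smooth cutoff decreasing from $1$ to $0$ on a unit interval. The partial derivatives of $\phi_\beta$ satisfy, via standard soft-max estimates, $\sum_{j_1,\dots,j_k}\abs{\partial^k_{j_1\dots j_k}\phi_\beta(w)} \lesssim \beta^{k-1}$ for $k=1,2,3$, with only a logarithmic dependence on $p$ thanks to the normalization inside $F_\beta$.

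Next I would set up the Slepian interpolation $Z(t) = \sqrt{t}\, S^X_n + \sqrt{1-t}\, S^Y_n$ and write
\[
\E{\phi_\beta(S^X_n)} - \E{\phi_\beta(S^Y_n)} = \int_0^1 \frac{d}{dt}\, \E{\phi_\beta(Z(t))}\, dt.
\]
Differentiating under the expectation, applying Gaussian integration by parts to the $y_i$ contribution and a third-order Taylor expansion to the $x_i$ contribution, the second-order terms cancel because $\mathrm{Var}(x_i) = \mathrm{Var}(y_i)$, so only third-order residuals remain. Combining the derivative bounds from the previous paragraph with the moment assumption \eqref{garassumption2}, this residual is of order $n^{-1/2} G_n^3 \beta^2$ up to logarithmic factors. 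To cope with the fact that \eqref{garassumption2} only controls third and fourth moments, I would truncate each $x_{ij}$ at level $\sim G_n \log(pn)$ using the sub-exponential bound \eqref{garassumption3}; the truncation bias contributes only lower-order corrections which, after multiplication by $\beta^2$, remain controlled.

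Finally, to convert back from $\phi_\beta$ to $\mathbf{1}_A$, I would invoke Nazarov's Gaussian anti-concentration inequality: since $(\mathrm{Var}\, S^Y_n)_{jj} \ge b$ by \eqref{garassumption1}, the probability that $S^Y_n$ lies within a band of width $O(\log(pn)/\beta)$ around the boundary of $A$ is at most $C_b \sqrt{\log p}\cdot \log(pn)/\beta$. Balancing this smoothing error against the Stein residual yields the optimal choice of $\beta$, producing the rate $(G_n^2 \log^7(pn)/n)^{1/6}$; because the only appearance of $b$ is through Nazarov's inequality, the resulting constant $C$ indeed depends only on $b$.

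The main obstacle is the mixed-derivative control of $\phi_\beta$ in the Stein step: one needs a combinatorial bound on the third partials of $\phi_\beta$ ensuring that the summation over the $p^3$ index triples produces only logarithmic factors in $p$, not polynomial ones. Equally delicate is coordinating the truncation threshold with $\beta$ so that smoothing error, Stein residual, and truncation bias can be simultaneously balanced, which is what ultimately fixes the exponent $1/6$ and the power $7$ on $\log(pn)$.
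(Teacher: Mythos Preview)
The paper does not prove this lemma at all: it is imported verbatim as Proposition~2.1 of \cite{Chernozhukov2014} and simply stated for later use. So there is no ``paper's own proof'' to compare against.

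Your sketch, however, is a faithful outline of the original Chernozhukov--Chetverikov--Kato argument: soft-max smoothing of the rectangle indicator, Slepian--Stein interpolation with cancellation of second-order terms via $\mathrm{Var}(x_i)=\mathrm{Var}(y_i)$, truncation at level $G_n\log(pn)$ to exploit \eqref{garassumption3}, and Nazarov's anti-concentration (driven by the lower bound $b$ from \eqref{garassumption1}) to undo the smoothing. The derivative bounds $\sum_{j_1,\dots,j_k}\abs{\partial^k\phi_\beta}\lesssim\beta^{k-1}$ and the resulting Stein residual of order $n^{-1/2}G_n^3\beta^2$ (up to logs) are correct, and balancing against the anti-concentration cost $\sqrt{\log p}\,\log(pn)/\beta$ does give the stated rate with the constant depending only on $b$. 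In short: the paper treats this as a black box, and what you have written is a correct high-level reconstruction of what is inside that box.
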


\section{Gaussian comparison result}

By the technique given in the proof of Theorem 4.1 by \cite{Chernozhukov2014} one obtains the following generalization of the result given in \cite{anticonc}

\begin{lemma} \label{gaussianComparison}
	Consider a pair of covariance matrices $\Sigma_1$ and $\Sigma_2$ of size $p\times p$ such that
	\begin{equation}
	\infnorm{\Sigma_1 - \Sigma_2} \le \Delta
	\end{equation} 
	and $\forall k : C_1 \ge (\Sigma_1)_{kk} \ge c_1 > 0$.
	Then for random vectors $\eta \sim \N{0}{\Sigma_1}$ and $\zeta \sim \N{0}{\Sigma_2}$ it holds that 
	
	\begin{equation}
	\sup_{A \in A^{re}} \abs{\Prob{\eta \in A} - \Prob{\zeta \in A}} \le C\Delta^{1/3} \log^{2/3}p,
	\end{equation} 
	where $C$ is a positive constant which depends only on $C_1$ and $c_1$.
\end{lemma}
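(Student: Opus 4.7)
The plan is to invoke the smoothing-plus-interpolation strategy of Chernozhukov, Chetverikov and Kato. Rewrite the indicator of a hyperrectangle $A = \{x : a_j \le x_j \le b_j \ \forall j\}$ as $\mathbf{1}\{M(x)\le 0\}$ with $M(x) \coloneqq \max_j \max(x_j-b_j, a_j-x_j)$, and approximate $M$ by the log-sum-exp $F_\beta(x) \coloneqq \beta^{-1}\log\sum_j[\exp(\beta(x_j-b_j)) + \exp(\beta(a_j-x_j))]$ for a smoothing parameter $\beta > 0$ to be tuned later. Compose with a $C^3$ surrogate $h_0$ of $\mathbf{1}\{\cdot \le 0\}$ to obtain $g \coloneqq h_0 \circ F_\beta$, which smoothly approximates $\mathbf{1}_A$.

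The first substantive step is a Slepian--Stein interpolation. Setting $Z_t \coloneqq \sqrt{t}\,\eta + \sqrt{1-t}\,\zeta$ for $t \in [0,1]$, differentiating $\E{g(Z_t)}$ in $t$ and applying Gaussian integration by parts yields the identity
\begin{equation*}
\E{g(\eta)} - \E{g(\zeta)} = \tfrac{1}{2}\sum_{j,k}(\Sigma_1 - \Sigma_2)_{jk} \int_0^1 \E{\partial_j\partial_k g(Z_t)}\,dt .
\end{equation*}
Bounding in absolute value by $\infnorm{\Sigma_1-\Sigma_2} = \Delta$ times $\sup_x \sum_{j,k}|\partial_j\partial_k g(x)|$ and invoking the standard estimate $\sum_{j,k}|\partial_j\partial_k g| \lesssim \beta\,\|h_0'\|_\infty + \|h_0''\|_\infty$ for the smoothed max gives an interpolation error of order $\Delta\beta$.

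The second step controls the smoothing error $|\Prob{\eta \in A} - \E{g(\eta)}|$ and its analogue for $\zeta$. Each is supported on the strip where the argument lies within $O(\beta^{-1}\log p)$ of $\partial A$. Nazarov's anti-concentration inequality for Gaussian maxima bounds this strip probability by $C(c_1, C_1)\,\beta^{-1}\log^{3/2} p$; this is exactly where the diagonal assumption $c_1 \le (\Sigma_1)_{kk} \le C_1$ is used, since the Nazarov constant depends only on the coordinate variances. Without loss of generality one may assume $\Delta \le c_1/2$ (otherwise the claimed bound is trivial by enlarging $C$), which places the diagonal of $\Sigma_2$ in $[c_1/2, 2C_1]$ and lets the same anti-concentration bound apply to $\zeta$.

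Combining the two contributions gives a total error of the form $\Delta \beta + \beta^{-1}\log^{3/2} p$ up to constants depending only on $c_1, C_1$. The $\Delta^{1/3}\log^{2/3} p$ rate results from invoking the sharper third-derivative version of the Slepian--Stein identity, as in \cite{Chernozhukov2014, anticonc}, where the second-order Hessian sum is replaced by a combination of second and third derivatives of $g$ — which, for the log-sum-exp, scale as $\beta$ and $\beta^2$ respectively — and then optimizing $\beta$ against the $\beta^{-1}\log^{3/2} p$ anti-concentration strip width. The main obstacle is this derivative accounting: a naive two-derivative balance only yields $\Delta^{1/2}\log^{3/4} p$, and reaching the stated $1/3$--$2/3$ exponents requires the refined CCK three-derivative comparison together with the observation that the Nazarov constant depends only on $c_1, C_1$ and not on the off-diagonal structure of $\Sigma_1$ or $\Sigma_2$.
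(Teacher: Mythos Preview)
Your proposal is correct and follows exactly the route the paper indicates: the paper provides no self-contained argument but simply says the result follows ``by the technique given in the proof of Theorem 4.1 by \cite{Chernozhukov2014}'', and your log-sum-exp smoothing plus Slepian interpolation plus Nazarov anti-concentration is precisely that technique. One small remark: the $\Delta^{1/3}\log^{2/3}p$ rate in the Gaussian-to-Gaussian comparison already falls out of the second-order Stein identity once you optimize jointly over the LSE parameter $\beta$ and the transition width of $h_0$; third derivatives enter in CCK only for the non-Gaussian CLT step, so that part of your last paragraph is unnecessary here.
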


\end{document}